\newcommand{\mathsym}[1]{}
\newcommand{\unicode}[1]{}
\newtheorem*{newprob}{Problem}
\theoremstyle{newthe}
\newtheorem*{newthe}{Conjecture  (Montiel-Urbano)}
\newtheorem{mainthe}{Theorem}
\def\q{\theta}
\def\l{\lambda}
\def\v{\nu}
\def\r{\rho}
\def\s{\sigma}
\def\o{\omega}
\def\O{\Omega}
\def\e{\varepsilon}
\def\II{\mathrm{II}}
\def\bq{\overline\theta}
\def\bO{\overline\Omega}
\def\bo{\overline\omega}
\def\be{\overline e}
\def\bn{\overline n}
\def\bps{\overline \psi}
\def\bph{\overline \phi}
\def\bPs{\overline \Psi}
\def\bPh{\overline \Phi}
\def\bH{\overline H}
\def\bv{\overline v}
\def\bep{\overline \epsilon}
\def\br{\overline \rho}
\def\xz{x_{z}}
\def\xbz{x_{\bar z}}
\def\p{\partial}
\def\beq{\begin{equation}}
\def\eeq{\end{equation}}
\def\bspl{\begin{split}}
\def\espl{\end{split}}
\def\psib1{\psi_{\bar1}}
\def\psbb1{\psi_{\bar1\bar1}}
\def\psb11{\psi_{\bar11}}
\def \vb1{v_{\bar1}}
\def \bav1{\overline{v_1}}
\def \bavb1{\overline{v_{\bar1}}}
\def \vba11{v_{\bar1 1}}
\def \v1b1{v_{1\bar1}}
\def \baph1{\bph_{1}}
\def \bbaph11{\bph_{11}}
\newcommand{\ca}{\overset{\circ}{{\mathcal A}}}
\newcommand{\bl}{\big\langle}
\newcommand{\Br}{\big\rangle}
\newcommand{\bll}{\big\langle\!\!\big\langle}
\newcommand{\Brr}{\big\rangle\!\!\big\rangle}
\newcommand{\B}{B(e,e)}
\newcommand{\Bb}{B(\bar e,\bar e)}
\DeclareMathOperator{\He}{Hess}
\newcommand{\n}{\nabla_{e}^\perp}
\newcommand{\nb}{\nabla_{\bar e}^\perp}
\newcommand{\ric}{\overline{Ric}}
\newcommand{\Be}{\bar e}
\newcommand{\vH}{\vec{H}}
\newcommand{\bvaa}{\bar{v}_{11}}
\newcommand{\vaa}{v_{11}}
\newcommand{\vbb}{v_{\bar{1}\bar{1}}}
\newcommand{\vab}{v_{1\bar1}}
\newcommand{\vBa}{v_{\bar1 1}}
\newcommand{\vaba}{v_{1\bar1 1}}
\newcommand{\vaab}{v_{11\bar1}}
\newcommand{\vbba}{v_{\bar1\bar1 1}}
\newcommand{\vbab}{v_{\bar1 1\bar1}}
\newcommand{\Ga}{\bar{\phi}_{1}}
\newcommand{\G}{\bar{\phi}}
\newcommand{\bF}{\bar{\psi}}
\newtheorem{theorem}{Theorem}[section]
\newtheorem{lemma}[theorem]{Lemma}
\newtheorem{proposition}[theorem]{Proposition}
\newtheorem{corollary}[theorem]{Corollary}
\theoremstyle{definition}
\newtheorem{definition}[theorem]{Definition}
\newtheorem{remark}[theorem]{Remark}
\title{\textbf{Willmore surfaces in 4-dimensional conformal manifolds}}
\author {Changping Wang~~~~~~Zhenxiao Xie\\}
\date{}
\begin{document}
\maketitle

\begin{abstract}
This paper is dedicated to the exploration of the conformal Willmore functional for surfaces within 4-dimensional conformal manifolds. We provide a detailed calculation of both the first and second variations, and present the Euler-Lagrange equation of this functional in a conformally invariant form. Utilizing the second variation formula we derived, we demonstrate that the Clifford torus in \(\mathbb{C}P^2\) is strictly Willmore-stable. This finding strongly supports the conjecture proposed by Montiel and Urbano [J. reine angew. Math. 546 2002, 139-154], which posits that the Clifford torus in \(\mathbb{C}P^2\) minimizes the Willmore functional among all tori. Moreover, by applying our formula to complex curves in \(\mathbb{C}P^2\), we establish that the first nonzero eigenvalue of the Jacobi operator is at least 12. In the context of 4-dimensional locally symmetric spaces, we construct several holomorphic differentials to show that among all minimal 2-spheres, only those super-minimal ones can be Willmore. 
\end{abstract}

\indent{\bf Keywords:} conformal Willmore functional; Willmore surfaces; second variation; Clifford torus; conformal geometry

\indent{\bf MSC(2020): 
49Q10, 53C42,  53C18}

\par\medskip\noindent
\section{Introduction}\label{sec-intro}
For closed surfaces in a given Riemannian manifold, considering the trace-free part of the second fundamental form, to integrate the norm square of it, one can obtain a global conformal invariant, which is known as  the conformally invariant Willmore functional (called {\em Willmore functional} for short in this paper, denoted by $\mathcal{W}$). 

In conformal spheres 
(the conformal compactification of real space forms), up to a topological invariant, 
the Willmore functional is also known as the integration of the norm square of mean curvature vector.   Since the work of Blaschke \cite{Blaschke} and Thomsen \cite{Thomsen}, this functional has been investigated in depth (see \cite{Willmore, Chen, Weiner, Li-Yau, Bryant, Pinkall, Ejiri, Kusner, Minicozzi, Guo-Wang-Li, Kuwert, Bernard-Riviere, Marques-Neves2, Laurain-Riviere, Riviere} and references therein). Many of these  studies concentrated on solving the famous Willmore conjecture, which is proposed by Willmore in \cite{Willmore}.  He conjectured that the Clifford torus minimizes the Willmore functional among all tori  in $\mathbb{S}^3$. Now this conjecture has been proved by Marques and Neves in \cite{Marques-Neves}. There are also several other works concerning the classification of Willmore surfaces in conformal spheres, such as the classification of Willmore $2$-spheres \cite{Bryant, Ejiri, Ma-Wang-Wang}.  

For surfaces in curved Riemannian manifolds (other than real space forms), the Willmore functional has also been studied. In \cite{Pedit-Willmore}, the authors stated the Euler-Lagrangian equation for general submanifolds without writing down a proof. Hu and Li rediscovered  this equation in  \cite{Hu-Li}, where Willmore complex submanifolds in a complex space form were also studied. In \cite{Mondino2},  Mondino and Rivi\`ere rewrote the Euler-Lagrange equation in a conservative form which makes sense for weak immersions. We point out that the Willmore functional was referred to the conformal Willmore functional in that paper, where the $L^2$-norm square of 
the mean curvature vector was called Willmore functional (we call it the {\em non-conformal Willmore functional} in this paper).  A rigidity theorem was obtained by them, which states that a compact $3$-dimensional Riemannian manifold of constant scalar curvature has constant sectional curvature, if and only if, every smooth constant mean curvature sphere is Willmore. In \cite{Mondino1}, the perturbative method was employed by Mondino to show the existence of Willmore spheres in $(\mathbb{R}^3, g_\epsilon)$, where  $ g_\epsilon$ is a metric close and asymptotic to the Euclidean one. 
In \cite{Ikoma}, Ikoma, Malchiodi and Mondino constructed smooth embedded Willmore tori with small area constraint in Riemannian three-manifolds, under some curvature condition. 
The algebraic structure of global conformal invariants for submanifolds has been investigated in \cite{Mondino3}. Recently, under some uniformly bounded conditions, Michelat and Mondino \cite{Michelat-Mondino}  obtained the quantization of energy for Willmore spheres in general closed Riemannian manifolds. This was thought as 
the first breakthrough on the generalization of Rivi\'ere’s min-max theory developed for Willmore spheres in Euclidean spaces \cite{Riviere}. 

In \cite{Montiel-Urbano}, the Willmore functional $\mathcal{W}$ was studied by Montiel and Urb ano for closed oriented surfaces in oriented $4$-dimensional Riemannian manifolds from the twistor viewpoint. They decomposed this functional as two parts, $\mathcal{W}^+$ and $\mathcal{W}^-$, which are both conformally invariant individually, and closely related to the Penrose twistor bundles over $4$-manifolds.  It was shown that super-conformal surfaces (also known as twistor holomorphic surfaces) with positive (resp. negative) spin minimize $\mathcal{W}^+$ (resp. $\mathcal{W}^-$).  Such kind of examples can be constructed easily when the ambient space is anti-self dual (resp. self dual), for which the twistor bundle is a complex manifold. 
In that paper, 
$\mathcal{W}^-$  was well studied for the case of $\mathbb{C}P^2$, which is self dual but not anti-self dual. The authors showed that not only super-minimal surfaces with negative spin but also general minimal surfaces are critical points of  $\mathcal{W}^-$. But for the functional $\mathcal{W}^+$, they proved that to be a critical point,  a minimal surface must be super-minimal with positive spin. Then it follows from the classification result of Gauduchon on super-minimal surfaces with positive spin that among all minimal surfaces only complex curves and minimal Lagrangian surfaces are Willmore. Note that in general these examples are not of negative spin, so one do not know whether they minimize the Willmore functional $\mathcal{W}$, which is equivalent to minimizing the functional $\mathcal{W}^-$.  Some explicit examples were studied in \cite{Montiel-Urbano}, including the Clifford torus $T$ 
defined as follows: 
\beq \label{eq-T2}
T=\Big\{[z_1, z_2, z_3]\in \mathbb{C}P^2\, \Big|\, |z_1|=|z_2|=|z_3| \Big\}.
\eeq
Inspired by the 
work of Minicozzi \cite{Minicozzi} on the Willmore conjecture in $\mathbb{R}^4=\mathbb{C}^2$, 
Montiel and Urbano proposed the following conjecture of Willmore type in \cite{Montiel-Urbano}. 
\begin{newthe}
The Clifford torus achieves the minimum of the functional $\mathcal{W}^-$ (and then $\mathcal{W}$) either amongst all tori in $\mathbb{C}P^2$, or amongst all Lagrangian tori in $\mathbb{C}P^2$. 
\end{newthe}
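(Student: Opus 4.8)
The plan is to attack the conjecture in two stages: first show that the Clifford torus $T$ is a \emph{strict local} minimizer of $\mathcal{W}^-$ (equivalently of $\mathcal{W}$, since the two differ by a topological constant on tori), and then attempt to promote local minimality to global minimality. For the first stage, note that $T$ is a minimal surface in $\mathbb{C}P^2$, so by the results of Montiel and Urbano recalled above it is automatically a critical point of $\mathcal{W}^-$; hence it suffices to analyze the second variation. My first task would therefore be to derive an explicit second variation formula for $\mathcal{W}$ (and separately for its pieces $\mathcal{W}^+$ and $\mathcal{W}^-$) at an arbitrary Willmore surface in a $4$-dimensional conformal manifold, written in conformally invariant form. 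This is the technical heart, and I would carry it out with a moving-frame computation adapted to the conformal normal bundle, tracking the conformal Gauss map and the trace-free second fundamental form, so that the resulting Jacobi-type operator is manifestly conformally natural.

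Specializing this formula to $T\subset\mathbb{C}P^2$, one exploits that $T$ is flat, minimal, and invariant under the torus action, so the self-adjoint operator governing the second variation has constant coefficients on $T=\mathbb{R}^2/\Lambda$ and can be diagonalized by Fourier series. The objective is to show the quadratic form is positive on the space of admissible normal variations after quotienting out the directions that are forced to be null: those generated by the Möbius group of $\mathbb{C}P^2$ (conformal invariance) and by reparametrizations. Concretely, I would write each normal field in a frame aligned with the principal directions of $T$, expand in the characters of $\Lambda$, compute the eigenvalue associated to each Fourier mode $(m,n)$, identify exactly which low modes produce the zero eigenvalues coming from the conformal group, and then verify the remaining eigenvalues are strictly positive. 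I expect the main computational obstacle to be precisely these low modes, where cancellations are delicate and one must be careful not to mistake a conformal Killing direction for a genuine instability; a clean bookkeeping of the kernel is essential, and it is also here that the companion bound on the first nonzero eigenvalue of the Jacobi operator for complex curves becomes a useful sanity check on the algebra.

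For the second stage — genuine global minimality among all tori, or among Lagrangian tori — strict local minimality alone does not suffice, and this is where I expect the real difficulty to lie. One route mirrors Marques--Neves: realize the infimum of $\mathcal{W}^-$ over tori as a min-max width over suitable multi-parameter sweepouts linked to $T$ and its conformal images, and prove the width equals $\mathcal{W}^-(T)$, using the strict stability from stage one to rule out degeneration onto $T$ itself and energy-quantization/compactness for Willmore tori to control the other limits. For the Lagrangian case a more algebraic approach may be available, seeking a pointwise or integral lower bound for $\mathcal{W}^-$ restricted to Lagrangian tori via holomorphic differentials or an integral-geometric identity in the spirit of Minicozzi's work in $\mathbb{C}^2$, with equality characterizing $T$. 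In all versions, ruling out far-away competitors is the genuine obstruction: no purely local variational argument can reach it, so the honest deliverable within the present framework is the strict second-variation result of stage one, which constitutes strong evidence for — but not a proof of — the full conjecture.
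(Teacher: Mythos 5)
The statement you were asked to prove is stated in the paper as an open conjecture (due to Montiel and Urbano); the paper does not prove it, and only establishes the strict Willmore stability of the Clifford torus (Theorem~\ref{main2}, proved in Section~\ref{sec-torus}) as supporting evidence. Your stage one is essentially identical to the paper's actual argument: derive the second variation in conformally invariant form, specialize to the flat minimal Clifford torus, expand normal variations in Fourier modes on $\mathbb{R}^2/\Lambda$, and check that the quadratic form is nonnegative with a $6$-dimensional kernel matching the conformal transformations of $\mathbb{C}P^2$ not preserving $T$. Your stage two is correctly flagged as undelivered; since no local variational argument can rule out far-away competitors, your honest conclusion that only strict stability is actually proved agrees exactly with the paper's own position.
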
 
This conjecture has been verified on certain families of Hamiltonian-minimal Lagrangian tori, see \cite{Ma-Mironov-Zuo} and \cite{Kazhymurat}. 

In this paper, we put the study of Willmore functional for surfaces in $4$-dimensional Riemannian manifolds under the framework of conformal geometry, through which the complex technique is fully used.   

For surfaces in $4$-dimensional conformal manifolds, several conformal invariants 
are constructed by us. With them, we can rewrite the functionals $\mathcal{W}^+$ and $\mathcal{W}^-$ of Montiel and Urbano as two integrals of conformal invariants, from which the minimizing characterization of super-minimal surfaces with positive or negative spin follows directly. 
The first variation of Willmore functional is calculated using the moving frame method. We write down the Euler-Lagrange equation by conformal invariants, see \eqref{eq-Eul} and \eqref{eq-Glcon}. 

Examples of Willmore surfaces in $4$-dimensional locally symmetric spaces 
are discussed. 
We prove that super-minimal surfaces with positive (resp. negative) spin  in locally symmetric, self-dual (resp. anti-self-dual) Einstein $4$-manifolds are Willmore. This can be seen as a generalization of one aforementioned result 
of Montiel and Urbano obtained in the case of $\mathbb{C}P^2$. Due to the work of Derdzi{\'n}ski \cite{Derdzinski}, such kind of ambient spaces include orientable, locally irreducible, and locally symmetric $4$-manifolds,  as well as self-dual K\"ahler-Einstein surfaces. 
Conversely, by constructing several holomorphic differentials, we obtain the following theorem. 
\begin{mainthe} \label{main1}
In a $4$-dimensional locally symmetric space, to be Willmore, a minimal $2$-sphere must be super-minimal. 
\end{mainthe}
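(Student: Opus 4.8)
The plan is to exploit the twistor decomposition $\mathcal{W}=\mathcal{W}^++\mathcal{W}^-$ (up to topology) together with the fact that a Willmore surface is critical for $\mathcal{W}$, and on a minimal surface the Euler--Lagrange systems for $\mathcal{W}^+$ and $\mathcal{W}^-$ decouple nicely. Concretely, for a minimal $2$-sphere $f\colon S^2\to N^4$ in a locally symmetric ambient space, I would first write the normal bundle in terms of the two (anti-)self-dual line bundles coming from the twistor lift, and introduce the two complex-valued quadratic-type objects $\psi$ and $\bar\phi$ built from the second fundamental form (the notation $\Fa,\Fb,\Ga,\Gb$ in the preamble suggests exactly such components). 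These are the building blocks whose vanishing encodes super-minimality with one spin or the other.

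The heart of the argument is to construct holomorphic differentials on $S^2$ out of the Willmore/minimality equations. For minimal surfaces the mean curvature vanishes, so the Euler--Lagrange equation simplifies to a $\bar\partial$-type equation in the relevant normal components; using that $N$ is locally symmetric, the curvature terms that obstruct holomorphicity are themselves parallel, and after the standard moving-frame computation one finds that a suitable combination — schematically a $(4,0)$-form $\psi\,dz^4$ or a product like $\psi\bar\phi\,dz^{k}$, matching the "holomorphic differentials" announced in the introduction — is holomorphic. The key step is to verify that the Willmore condition, restricted to the minimal locus, is precisely what kills the antiholomorphic part of this differential; here one invokes the conformally invariant form \eqref{eq-Eul}, \eqref{eq-Glcon} of the Euler--Lagrange equation established earlier, evaluated along $\vec H=0$.

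Once the holomorphic differential is in hand, the topology of $S^2$ finishes the job: a holomorphic $(k,0)$-form on the Riemann sphere with $k\ge 1$ is identically zero (no global holomorphic differentials of positive degree on $\mathbb{CP}^1$). Vanishing of this differential forces the relevant second-fundamental-form component to vanish identically, which is exactly the statement that the minimal sphere is super-minimal (twistor holomorphic) — possibly with a case split according to which of $\psi$ or $\bar\phi$ the differential was built from, corresponding to positive versus negative spin; in either case one concludes super-minimality.

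The main obstacle I anticipate is the second step: showing that the would-be differential is genuinely holomorphic. A priori $\bar\partial$ of such a quantity produces curvature terms of the ambient manifold contracted with the second fundamental form, and these do not vanish for a general $N^4$. The local symmetry hypothesis ($\overline\nabla \bar R=0$) must be used precisely to reorganize these terms — either they combine into something proportional to the Willmore/minimality equations, or they telescope because the Ricci-type normal curvature $\rp$ is parallel along $f$. Getting this cancellation to work, and in particular choosing the correct weight $k$ and the correct tensorial combination so that no residual term survives, is the delicate part; everything after that is formal.
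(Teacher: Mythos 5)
Your overall skeleton --- construct holomorphic differentials, observe that they must vanish on $\mathbb{S}^2$, and deduce super-minimality --- is indeed the paper's skeleton, but the step you yourself flag as ``the delicate part'' is precisely where the proposal diverges from what actually works, and it is left unresolved. In the paper the relevant holomorphic differentials are \emph{not} built from $\phi$ or $\psi$, and their holomorphicity does \emph{not} come from the Willmore equation: they are built from the ambient curvature tensor, namely $K(x_z,n,x_z,\bar n)\,dz^2$ and $\bigl(K(x_z,\bar n,x_z,\bar n)K(x_z,n,x_z,n)\bigr)dz^4$, and Lemma~\ref{lem-2diff} shows these are holomorphic using only minimality and local symmetry (via \eqref{eq-2holo} and \eqref{eq-3holo}). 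The Willmore condition enters afterwards, purely algebraically: it is the pointwise identity \eqref{eq-mini-Will}, which, once the curvature differentials have been killed by the topology of the sphere, degenerates to $K(x_{\bar z},\bar n,x_{\bar z},\bar n)\Phi=0$, whence $\Phi\equiv 0$ on the dense set where the curvature coefficient is nonzero. So the logic is the reverse of what you propose: holomorphicity is obtained for free from local symmetry, and the Willmore equation is used as an algebraic constraint, not as the source of a $\bar\partial$-cancellation for a differential built out of the second fundamental form.

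Your outline also misses an entire case. When both $K(x_z,n,x_z,n)$ and $K(x_z,\bar n,x_z,\bar n)$ vanish identically, the algebraic step above yields nothing, and the paper must construct five further quantities \eqref{eq-4holo}--\eqref{eq-8holo}, invoke Chern's theorem on elliptic differential systems to conclude that $\Phi$, $\Psi$ and the mixed curvature terms either vanish identically or have only isolated zeros, and run a density argument on the open set where $\Phi\Psi\neq 0$ to show that $K(x_z,\bar n,x_{\bar z},x_z)$ and $K(x_z,n,x_z,x_{\bar z})$ vanish; only then does the Codazzi equation make the Hopf-type quartic $\Phi\Psi\,dz^4$ holomorphic, forcing the contradiction. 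Without identifying the correct curvature-built differentials and without this case split, the argument does not close.
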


The second variation of the Willmore functional for surfaces in conformal spheres has been explored by numerous geometers. In \cite{Weiner}, Weiner derived the second variational formula specifically for minimal surfaces in \(\mathbb{S}^3\). Palmer, utilizing the conformal Gauss map, calculated the second variation for general Willmore surfaces in \(\mathbb{S}^3\) in \cite{Palmer1} and \cite{Palmer2}. In \cite{Guo-Wang-Li}, Guo, Li, and the first author of this paper presented the second variation formula using M\"obius invariants for umbilic-free Willmore submanifolds in \(\mathbb{S}^n\). Additionally, the second variation of the non-conformal Willmore functional has been computed for surfaces in Riemannian manifolds, see \cite{Lamm-Metzger-Schulze} and \cite{Michelat}.

In this paper, we focus on calculating the second variation of Willmore surfaces within 4-dimensional conformal manifolds. The formula is detailed in Theorem~\ref{thm-2ndvar}. {As an application, we obtain the following results in $\mathbb{C}P^2$, providing strong support to the conjecture by Montiel and Urbano mentioned earlier. 
\begin{mainthe}\label{main2}
The Clifford torus in $\mathbb{C}P^2$ is strictly Willmore-stable, as well as strictly stable for $\mathcal{W}^-$. 
\end{mainthe}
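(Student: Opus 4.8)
The plan is to substitute the highly symmetric geometry of the Clifford torus into the second-variation formula of Theorem~\ref{thm-2ndvar} and thereby reduce the claim to an explicit, finite eigenvalue computation. First I would assemble the relevant data of $T\subset(\mathbb{C}P^2,g_{FS})$: it is a flat torus, minimal and Lagrangian, and homogeneous under the isometric action of the maximal torus $\mathbb{T}^2\subset PU(3)$. Because every point of $T$ looks the same, all of the conformal invariants appearing in Theorem~\ref{thm-2ndvar} — the conformal metric, the conformal Hopf differential together with its covariant derivatives, the normal connection and normal curvature, and the (parallel) curvature contributions of $\mathbb{C}P^2$ — are constant in a $\mathbb{T}^2$-adapted coframe on $T$ and in an adapted frame of the normal bundle $NT$; note that $NT$ is flat and trivial, e.g.\ via the complex structure $J$, which identifies it with $T(T^2)$. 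I would also fix, once and for all, the meaning of ``strictly Willmore-stable'': tangential variations leave $\mathcal{W}$ invariant, and since $g_{FS}$ is not conformally flat its conformal transformation group is just $PU(3)$, so the genuinely trivial normal variations are the normal parts along $T$ of the $8$-dimensional space of Killing fields of $\mathbb{C}P^2$; the statement to be proved is that the second-variation quadratic form is positive definite on a complement of this space.

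Second, plugging these constants into Theorem~\ref{thm-2ndvar} turns $\delta^2\mathcal{W}|_T$ into a constant-coefficient, formally self-adjoint, fourth-order elliptic quadratic form $Q$ on $\Gamma(NT)\cong C^\infty(T;\mathbb{R}^2)$. Expanding a variation field in the Fourier basis $\{e^{2\pi i\ip{\lambda,x}}:\lambda\in\Lambda^*\}$ of the flat torus $T=\mathbb{R}^2/\Lambda$ block-diagonalizes $Q$ into a family of Hermitian $2\times2$ matrices $Q_\lambda$, $\lambda\in\Lambda^*$ (pairing $\lambda$ with $-\lambda$ for real fields), whose entries are explicit polynomials in $\lambda$ of degree at most $4$, the top-degree part being governed by the positive principal symbol of the Willmore Jacobi operator. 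Strict stability then becomes two assertions: $Q_\lambda\succeq 0$ for every $\lambda$, and $\ker Q_\lambda\neq 0$ exactly for the finitely many $\lambda$ realized by restrictions of $\mathfrak{su}(3)$-Killing fields, with no additional (``spurious'') kernel.

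Third, I would observe that the integrand of Theorem~\ref{thm-2ndvar} splits along the Montiel--Urbano decomposition $\mathcal{W}=\mathcal{W}^{+}+\mathcal{W}^{-}$, and that the $\mathcal{W}^{+}$-part is automatically nonnegative: since $T$ is minimal Lagrangian it is super-minimal with positive spin, hence a global minimizer of $\mathcal{W}^{+}$, so $\delta^2\mathcal{W}^{+}|_T\succeq 0$. The crux is therefore the positivity, modulo the trivial directions, of the $\mathcal{W}^{-}$-part, i.e.\ of the matrices $(Q^{-})_\lambda$: because the leading quartic dominates, $(Q^{-})_\lambda\succ 0$ for all but finitely many low modes, so positivity reduces to a finite computation — the same mechanism that, run on a complex line $\mathbb{C}P^1\subset\mathbb{C}P^2$ with its $U(2)$-homogeneity, produces the lower bound $12$ for the first nonzero Jacobi eigenvalue. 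Carrying out the low-mode analysis, matching the zero modes precisely with the $PU(3)$-Killing directions, and exhibiting a uniform positive spectral gap on the orthogonal complement yields strict stability for $\mathcal{W}^{-}$, and then for $\mathcal{W}=\mathcal{W}^{+}+\mathcal{W}^{-}$ as well.

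The main obstacle I expect is precisely this finite but delicate low-mode step: one must prove that $(Q^{-})_\lambda$ (equivalently $Q_\lambda$) has no kernel beyond the eight directions coming from $PU(3)$ — a ``no spurious Jacobi field'' statement of the same flavour as the nullity computations for the Clifford torus in $\mathbb{S}^3$ — and one must control the finitely many small modes precisely enough to extract a strictly positive gap. A secondary technical hurdle is the bookkeeping required to trivialize $NT$ globally and to write the conformal-invariant integrand of Theorem~\ref{thm-2ndvar} in the $\mathbb{T}^2$-adapted frame without sign or normalization errors; once the quadratic form is correctly assembled, what remains is a sizeable but essentially routine piece of linear algebra.
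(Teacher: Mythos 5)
Your proposal follows essentially the same route as the paper: substitute the homogeneous data of the Clifford torus ($H=0$, $\psi=0$, $\phi=\tfrac{\sqrt2}{2}i$, all covariant derivatives zero, flat tangent and normal bundles, constant ambient curvature terms) into the second-variation formula of Theorem~\ref{thm-2ndvar}, reduce to a constant-coefficient quadratic form on the flat torus, decide positivity mode-by-mode using the Laplace spectrum $\{6(m^2+n^2+mn)\}$ (so $\lambda_1=6$, $\lambda_2=18$), and identify the resulting $6$-dimensional kernel with the normal parts of the $\mathfrak{su}(3)$ Killing fields. The only organizational difference is that the paper handles $\mathcal{W}$ and $\mathcal{W}^-$ by two parallel direct computations --- decoupling the real and imaginary parts of $v$ via a Cauchy--Schwarz inequality (Lemma~\ref{lem-inequality}) whose equality case pins down the nullity --- rather than deducing $\mathcal{W}$-stability from $\delta^2\mathcal{W}^+\ge 0$ together with strict $\mathcal{W}^-$-stability, but your shortcut is equally valid since the Clifford torus, being minimal Lagrangian, has $\psi\equiv 0$ and hence minimizes $\mathcal{W}^+$.
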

}

{ Comparing with Theorem~\ref{main2}, it is natural to consider the Willmore stability of complex curves. 
In fact, we can arrive at an even stronger conclusion directly, i.e., {\em every complex curve minimizes the Willmore functional $\mathcal{W}$ within its homotopy class}. 
 It is already known that  every complex curve is the minimizer of $\mathcal{W}^+$ and the area functional $\mathcal{A}$ within its homotopy class. Regarding $\mathcal{W}^-$, 
we only need to observe that 
$$\mathcal{W}^-(M^2)=\int_{M^2} (|\vec{H}|^2+2)dA\geq \int_{M^2} 2dA=2\mathcal{A}(M^2),$$
and at the right-hand side, a complex curve achieves the minimum within its homotopy class, which is precisely equal to the value of $\mathcal{W}^-$. 

In this scenario, we can utilize our second variational formula in reverse. By combining it with the stability of \(\mathcal{W}^+\), we show in Remark~\ref{rk-complex} that {\em for complex curves in $\mathbb{C}P^2$, the first non-zero eigenvalue of the Jacobi operator is at least 12}. 
}  

 We organize this paper as follows. Section~\ref{sec-con-inv} discusses the conformal geometry of surfaces in $4$-dimensional conformal manifolds. The first variation of the Willmore functional is calculated in Section~\ref{1st-variation}, where examples of Willmore surfaces in some $4$-dimensional locally symmetric spaces are also discused. In Section~\ref{sec-2sphere}, we prove Theorem~\ref{main1} by constructing several holomorphic differentials for minimal surfaces in $4$-dimensional locally symmetric spaces.  Section~\ref{2nd-variation} is devoted to presenting the calculation of the second variation formula for Willmore surfaces. As an application, we study the Willmore stability of the Clifford torus 
 in Section~\ref{sec-torus}, 
 where the proof of Theorem~\ref{main2} 
 can be found. 
 
\section{Conformal geometry of surfaces in conformal 4-manifold}\label{sec-con-inv}
\par\medskip
Let $(N^4, [h])$ be a $4$-dimensional conformal manifold, where $h$ is a Riemannian metric on $N^4$, and $[h]$ is the conformal class of $h$. Suppose $x: M^2\to (N^4,[h])$ is an immersed surface. In this section, we define some conformal invariants 
of $x(M^2)$.  
\subsection{Basic isometric invariants on $(M^2, x^*h)$}
Let $\{e_1,e_2, e_3, e_4\}$ be a local orthonormal frame for $h$ defined along $x(M^2)$ with the dual coframe as $\{\o_1,\o_2, \o_3, \o_4\}$, such that on $x(M^2)$, $e_1, e_2$ (resp. $e_3, e_4$) are tangent (resp. normal) vector fields. The Levi-Civita connection forms of $(N^4, h)$ are denoted by $\o_{ij}$. They satisfy $\o_{ij}+\o_{ji}=0$ and the following structure equations, 
\begin{equation}\label{eq-structure}
\begin{split}
&~~~~~~~~~~~~~~~~~~~~~~~~d\o_i=\sum_{j=1}^4\o_{ij}\wedge\o_j, ~~~1\leq i\leq 4;\\
&d\o_{ij}-\sum_{k=1}^{4}\o_{ik}\wedge\o_{kj}+\sum_{1\leq k<l\leq4}K_{ijkl}\o_k\wedge\o_l=0,~~~1\leq i,j\leq 4,
\end{split}
\end{equation}
where $K_{ijkl}$ is the curvature tensor of $(N^4, h)$.

In order to analyze the conformal geometry of $x$ more efficiently, we expoit the complex notation. On $(N^4, h)$, set   
\begin{equation}\label{eq-complex}
\begin{split}
e\triangleq e_1+ie_2,~~~n\triangleq e_3+ie_4,~~~\o\triangleq \o_1+i\o_2,~~~\O\triangleq \o_3+i\o_4,~~~~~~\\
\mathrm{II}_1\triangleq(\o_{13}+i\o_{23})+i(\o_{14}+i\o_{24}),~~~\mathrm{II}_2\triangleq(\o_{13}-i\o_{23})+i(\o_{14}-i\o_{24}).
\end{split}
\end{equation}
Then the structure equation \eqref{eq-structure} can be rewritten as below
\begin{align}
d\o=-i\o_{12}\wedge\o+\frac{1}{2}(\mathrm{II}_1\wedge\overline\O+\overline{\II}_2\wedge\O),\label{eq-do}\\
d\O=-i\o_{34}\wedge\O-\frac{1}{2}(\mathrm{II}_1\wedge\overline\o+\II_2\wedge\o),\label{eq-dO}
\end{align}
\begin{equation}\label{eq-dII1}
\begin{split}
d\,\II_1=-i(\o_{12}+\o_{34})\wedge \II_1+\frac{1}{4}\big(&K(e,n,\be,e)\bo\wedge\o+K(e,n,\bn,n)\overline\O\wedge\O-K(e,n,\be,n)\o\wedge\bO\\
&-K(e,n,\be,\bn)\o\wedge\O-K(e,n,e,n)\bo\wedge\bO-K(e,n,e,\bn)\bo\wedge\O\big),
\end{split}
\end{equation}
\begin{equation}\label{eq-dII2}
\begin{split}
d\,\II_2=i(\o_{12}-\o_{34})\wedge\II_2+\frac{1}{4}\big(&K(\bar e,n,\bar e,e)\bo\wedge\o+K(\bar e,n,\bar n,n)\overline\O\wedge\O-K(\bar e,n,\bar e,n)\o\wedge\bO\\
&-K(\be,n,\be,\bn)\o\wedge\O-K(\be,n,e,n)\bo\wedge\bO-K(\be,n,e,\bn)\bo\wedge\O\big).
\end{split}
\end{equation}

Note that $x^*(\O)=0$. Set 
$$\q\triangleq x^*(\o),~~~\q_{12}\triangleq x^*(\o_{12}),~~~\q_{34}\triangleq x^*(\o_{34}).$$ 
It follows from \eqref{eq-dO} that we can assume
\begin{equation}\label{eq-II}
\II_1=H\q+2\,\bps\,\bq,~~~~~~\II_2=H\bq+2\,\phi\,\q, 
\end{equation}
where $H, \phi, \psi$ are three locally defined complex valued functions on $M^2$. By direct calculation, we have
\begin{equation}\label{eq-stru}
\begin{split}
\nabla_e \,e=-i\theta_{12}(e)e+2\bps\,\bn+2\bph\, n,~~~\nabla_{\be}\, e=-i\theta_{12}(\be)e+H\,\bn+\bH\, n,\\
\nabla_e \,n=-i\theta_{34}(e)n-2\bps\,\be-H\, e,~~~\nabla_{\be}\, n=-i\theta_{34}(\be)n-H\,\be-2\phi\, e,
\end{split}
\end{equation}
where $\nabla$ is the Levi-Civita connection of $h$.
It follows that the mean curvature vector 
$$\vec{H}= \frac{(\nabla_{\be} \,e)^\perp}{2}=\frac{H\,\bn+\bH\, n}{2},$$
with $|H|$ being the mean curvature of $x$. Here $\perp$ denotes the orthogonal projection on the normal plane of $x$.   
If we denote the second fundamental form of $x$ by  
$$B=\sum_{1\leq i,j\leq 2}(b_{ij}^3 e_3+b_{ij}^4 e_4)\q_i\otimes \q_j,$$
 then  $B(e,e)=(\nabla_e \,e)^\perp=2\bps\,\bn+2\bph\, n$, and one can verify that 
 \beq\label{eq-2ndform}
 2\phi=\left(b_{12}^4+\frac{b_{11}^3-b_{22}^3}{2}\right)-i \left(b_{12}^3-\frac{b_{11}^4-b_{22}^4}{2}\right), ~~~2\psi=\left(-b_{12}^4+\frac{b_{11}^3-b_{22}^3}{2}\right)-i \left(b_{12}^3+\frac{b_{11}^4-b_{22}^4}{2}\right).
 \eeq
Consequently,  
the norm square of the trace-free part of the second fundamental form can be written as below:  
 \beq \label{eq-tracefree}
 |\overset{\circ} B|^2=4(|\phi|^2+|\psi|^2).
 \eeq
 
It is easy to see that $B(\be,\be)\q^2$ is a globally defined normal-bundle-valued $2$-form, and 
$$\bl B(\be, \be), B(\be,\be)\Br \q^4=8\phi\psi\q^4$$ is a globally defined quartic form on $M^2$, where $\langle ~,~\rangle$ denotes the bilinear inner product on $TN^4\otimes \mathbb{C}$ induced by the Riemannian metric $h$.  In fact, they are both conformal invariants (see Subsection~\ref{subsec-confinv}). 

Using \eqref{eq-2ndform}, one can verify directly that the quartic form $\phi\psi\q^4$ 
vanishes at a given point $p\in M^2$ if and only if the curvature ellipse (see \cite{Guadalupe-Rodriguez, Friedrich} for the precise definition) of $x$ defined by 
$$\vec{H}+\cos(2\varphi)\frac{B(e_1,e_1)-B(e_2,e_2)}{2}+\sin(2\varphi)B(e_1, e_2),~~~0\leq \varphi\leq 2\pi,$$
is a circle at $p$. 
\begin{definition}
Let $x: M^2\rightarrow (N^4, h)$ be a surface. If the quartic form $\phi\psi\q^4$ vanishes identically on $M^2$, then $x$ is called a super-conformal surface. If in addition $\vec{H}=0$, we call $x$ a super-minimal surface.  
\end{definition}
\begin{remark}\label{rk-orientaion}
Suppose $(N^4, h)$ is an oriented 4-dim manifold, $x: M^2\rightarrow (N^4, h)$ is an oriented super-minimal surface, and the orthonormal frame $\{e_1, e_2, e_3, e_4\}$ is chosen such that 
$e_1\wedge e_2\wedge e_3\wedge e_4$ and $e_1\wedge e_2$ coincide the orientation of $N^4$ and $x(M^2)$, respectively. It is straightforward to verify that  the super-minimal surface $x$ of positive (resp. negative) spin, in the sense of \cite{Montiel-Urbano}, can be characterized by $\psi=0$ (resp. $\phi=0$). 
\end{remark}

The following well-known cross sections of the bundle $\text{Hom}(NM^2, NM^2)$ appearing in the second variation of minimal surfaces will be used in the sequel, 
\beq\label{eq-calA}
\ca(\xi)\triangleq\frac{1}{4}\Big(\bl\B,\xi\Br\Bb+\bl\Bb,\xi\Br\B\Big),~~~\ric(\xi)\triangleq\frac{1}{2}\big(K(e,\xi)\Be+K(\Be,\xi)e\big)^\perp, 
\eeq
where $\xi=(a\bn+\bar a n)/{2}$ is a normal vector field. 
It is straightforward to verify that 
\begin{align}
&\bl \ca(\xi),\xi\Br=2\Big(|a|^2\left(|\phi|^2+|\psi|^2\right)+a^2\G\psi+\bar{a}^2\phi \bar \psi \Big). \label{eq-avv}
\end{align}

\subsection{Some formulas on complex covariant derivatives}\label{subsec-cov}
We define the complex covariant derivatives and the covariant differential of $\phi,\, \psi,\, H$ as below: 
\begin{align}
&\phi_1\q+\phi_{\bar 1}\bq=D\phi\triangleq d\phi-i(2\q_{12}-\q_{34})\phi,~~~&\psi_1\q+\psi_{\bar 1}\bq=D\psi\triangleq d\psi-i(2\q_{12}+\q_{34})\psi,\\
&\phi_{11}\q+\phi_{1\bar 1}\bq=D\phi_1\triangleq d\phi_{1}-i(3\q_{12}-\q_{34})\phi_1,~~~&\psi_{11}\q+\psi_{1\bar 1}\bq=D\psi_1\triangleq d\psi_{1}-i(3\q_{12}+\q_{34})\psi_1\\
&\phi_{\bar1 1}\q+\phi_{\bar1 \bar1}\bq=D\phi_{\bar1}\triangleq  d\phi_{\bar1}-i(\q_{12}-\q_{34})\phi_{\bar1},~~~&\psi_{\bar1 1}\q+\psi_{\bar1 \bar1}\bq=D\psi_{\bar1}\triangleq d\psi_{\bar1}-i(\q_{12}+\q_{34})\psi_{\bar1},\label{eq-2ndpsi}\\
&H_1\q+H_{\bar 1}\bq=DH\triangleq dH+i\theta_{34}H,~~~&\bH_1\q+\bH_{\bar 1}\bq=D\bH\triangleq d\bH-i\theta_{34}\bH,\\
&H_{11}\q+H_{1\bar 1}\bq=DH_1\triangleq dH_1-i(\q_{12}-\theta_{34})H_1,~~~&\bH_{11}\q+\bH_{1\bar1}\bq=D\bH_{1}\triangleq  d\bH_1-i(\q_{12}+\theta_{34})\bH_1.
\end{align}
It is straightforward to verify that 
\begin{align}
&\nb \B=4(\bF_1\bn+\Ga n),\\
&{(\nabla^{\perp})}^2 B(e,e,\be,\be)=\nb (\nb B)(e,e)-\nabla_{\nabla_{\be} \be}^\perp B(e,e)= 8(\bF_{11}\bn+\bph_{11} n), \\
&\triangle^\perp \vec H=2(H_{1\bar 1}\bn+\bH_{1\bar 1}n)+i R_{e_2 e_1}^\perp\vec H=2(H_{1\bar 1}\bn+\bH_{\bar 1 1}n)+\frac{R_{1234}}{2}\vec H,  \label{eq-lapH}
\end{align}
where  $\nabla^{\perp}$ is the normal connection of $x$, $\triangle^\perp$ is the normal Laplacian, $R^\perp_{{\,\cdot \,\cdot }}\cdot$
is the normal  curvature tensor, and $R_{1234}$ is the normal Gauss curvature of $x$. We will denote by $R_{1212}$ the Gauss curvature of $x$.  


The Gauss-Codazzi-Ricci equations of $x$ are given by
\begin{equation}\label{eq-Gauss}
R_{1212}=K_{1212}+|H|^2-2(|\phi|^2+|\psi|^2);
\end{equation}
\begin{equation}\label{eq-Codazzi}
2\phi_{\bar1}-H_1=\frac{1}{4}K(\be,n,\be,e),~~~~~~2\psi_{\bar1}-\overline H_1=\frac{1}{4}K(\be,\bn,\be,e);
\end{equation}
\begin{equation}\label{eq-Ricci}
R_{1234}=K_{1234}+2(|\phi|^2-|\psi|^2).
\end{equation}
Using \eqref{eq-Codazzi}, we can obtain 
\begin{equation}\label{eq-sderi}
\begin{split}
8(2\phi_{\bar1\bar1}-H_{1\bar1})=&(\nabla_e\,K)(\be,n,\be,e)+H\Big(K(\be,e,\bn,n)+K(\be,n,\bn,e)-K(\be,e,\be,e)\Big)\\
&+\bH K(\be,n,n,e)+2\Big(K(\be,n,\be,\bn)\bps+K(\be,n,\be,n)\bph\Big),\\
8(2\psi_{\bar1\bar1}-\bH_{1\bar1})=&(\nabla_e\,K)(\be,\bn,\be,e)+\bH\Big(K(\be,e,n,\bn)+K(\be,\bn,n,e)-K(\be,e,\be,e)\Big)\\
&+H K(\be,\bn,\bn,e)+2\Big(K(\be,\bn,\be,\bn)\bps+K(\be,\bn,\be,n)\bph\Big),
\end{split}
\end{equation}
which relates ${(\nabla^{\perp})}^2 B(e,e,\be,\be)$ to $\triangle^\perp \vec H$. 

Similarly, for a general normal vector field $\xi=(a\bn+\bar a n)/{2}$, we define 
\beq \label{eq-cov}
a_1\q+a_{\bar 1}\bq=Da\triangleq da+i\theta_{34}a, ~~~\bar a_1\q+\bar a_{\bar 1}\bq=D\bar a\triangleq d\bar a-i\theta_{34}\bar a,
\eeq
\beq \label{eq-conv2}
a_{11}\q+a_{1\bar 1}\bq=Da_1\triangleq da_1-i(\q_{12}-\theta_{34})a_1,~~~ a_{\bar11}\q+a_{\bar1\bar1}\bq=Da_{\bar1}\triangleq  da_{\bar1}+i(\q_{12}+\theta_{34}) a_{\bar{1}}.
\eeq 
Then it is easy to verify that 
\begin{align}
&\nb\xi =a_1 \bn+\bar a_1 n,\label{eq-nbev}\\
&\He \xi(\Be,\Be)=2\left(a_{11} \bn+\bar{a}_{11} n\right),\label{eq-hessV}\\
&\He \xi(\Be,e)=2\left(a_{1\bar 1} \bn+\bar a_{1\bar 1} n\right)=\Delta^\perp \xi-\frac{1}{2}R^\perp_{e \Be}\xi, \label{eq-hessV2}
\end{align}
where $\He \xi$ is the normal Hessian of $\xi$ defined as below:  
$$\He \xi(X,Y)\triangleq \nabla^\perp_Y\nabla^\perp_X \xi-\nabla_{\nabla_{Y}X} ^\perp \xi.$$

We will make use of the following lemma, which involves integration by parts, in the subsequent variational analysis of the Willmore functional.
\begin{lemma}\label{lem-int1}
Suppose $M^2$ is a closed surface. Let $f\q^{r_1}\bar{\theta}^{s_1}$ and $\tilde{f}\q^{r_2}\bar{\theta}^{s_2}$ be two tensors. 

(1) If $r_1+r_2=s_1+s_2+1$, then 
$$\int_{M^2}(Df)\wedge \tilde{f}\q=-\int_{M^2}(D\tilde{f})\wedge f\q,~~~ {\rm i.e.},~~~\int_{M^2}f_{\bar1}\tilde{f} dA_x=-\int_{M^2}f\tilde{f}_{\bar1}dA_x.$$

(2) If $r_1+r_2=s_1+s_2-1$, then
$$\int_{M^2}(Df)\wedge \tilde{f}\bq=-\int_{M^2}(D\tilde{f})\wedge f\bq, ~~~ {\rm i.e.},~~~ \int_{M^2}f_{1}\tilde{f} dA_x=-\int_{M^2}f\tilde{f}_{1}dA_x.$$
\end{lemma}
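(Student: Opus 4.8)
The plan is to reduce both identities to the statement that the integral of an exact $1$-form over the closed surface $M^2$ vanishes, exactly as in the commented-out Lemma above, with the degree bookkeeping on $\theta$ and $\bar\theta$ controlling which of the two cases occurs. First I would observe that for tensors $f\theta^{r_1}\bar\theta^{s_1}$ and $\tilde f\theta^{r_2}\bar\theta^{s_2}$, the product $f\tilde f\,\theta^{r_1+r_2}\bar\theta^{s_1+s_2}$ is a globally well-defined section; under the hypothesis $r_1+r_2=s_1+s_2+1$ of part (1), the form $f\tilde f\,\theta$ (more precisely $f\tilde f$ times a representative with the correct weight, i.e. the globally defined object obtained by contracting away $(s_1+s_2)$ pairs $\theta\wedge\bar\theta$) is a globally defined $1$-form of type $(1,0)$ in the coframe $\{\theta,\bar\theta\}$, and similarly in part (2) with $\bar\theta$ in place of $\theta$. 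The key computational input is the Leibniz-type identity
\[
D(f\tilde f)=(Df)\tilde f+f\,(D\tilde f),
\]
valid because the connection weights were set up in \eqref{eq-2ndpsi}, \eqref{eq-cov}, \eqref{eq-conv2} precisely so that $D$ acts as a derivation compatible with the $U(1)\times U(1)$ weights carried by $\theta_{12}$ and $\theta_{34}$; under the degree hypothesis the weights of $f$ and $\tilde f$ are complementary, so the $\theta_{12},\theta_{34}$ terms cancel and $D(f\tilde f)$ is an honest differential of the scalar-like object $f\tilde f$.

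Next I would compute $d(f\tilde f\,\theta)$. Using $d\theta=-i\theta_{12}\wedge\theta$ (which follows from \eqref{eq-do} after pulling back, since $x^*\O=0$) together with the derivation property, one gets
\[
d(f\tilde f\,\theta)=(Df)\wedge\tilde f\,\theta+(D\tilde f)\wedge f\,\theta,
\]
where the connection correction from $d\theta$ is absorbed exactly into the definition of $D$ on the weight-one object $f\tilde f$. Integrating over the closed surface $M^2$ and applying Stokes' theorem kills the left side, giving $\int_{M^2}(Df)\wedge\tilde f\,\theta=-\int_{M^2}(D\tilde f)\wedge f\,\theta$. Finally, writing $Df=f_1\theta+f_{\bar1}\bar\theta$, only the $f_{\bar1}\bar\theta\wedge\theta$ term survives the wedge with $\theta$, and using $\bar\theta\wedge\theta=2i\,dA_x$ (the normalization consistent with $\theta=x^*\omega=\omega_1+i\omega_2$) converts the identity of $2$-forms into the stated identity of integrals $\int_{M^2}f_{\bar1}\tilde f\,dA_x=-\int_{M^2}f\tilde f_{\bar1}\,dA_x$. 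Part (2) is identical after replacing $\theta$ by $\bar\theta$ everywhere and using $d\bar\theta=i\theta_{12}\wedge\bar\theta$, so that the surviving term is the $f_1\theta\wedge\bar\theta$ one.

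The main obstacle, and the only place requiring genuine care rather than routine manipulation, is verifying that the degree condition $r_1+r_2=s_1+s_2+1$ (resp. $-1$) is exactly what makes $f\tilde f\,\theta$ (resp. $f\tilde f\,\bar\theta$) a \emph{globally defined} $1$-form and simultaneously makes the covariant derivative $D(f\tilde f)$ coincide with the ordinary exterior derivative up to the $\theta_{12}$-term coming from $d\theta$; this is a matter of tracking the $\theta_{12}$- and $\theta_{34}$-weights through all the defining relations and checking the cancellation. Once the weight bookkeeping is in place, the rest is a one-line Stokes argument. I would therefore structure the proof as: (i) record the weight of a general tensor $f\theta^{r}\bar\theta^{s}$ and the derivation property of $D$; (ii) note $d\theta=-i\theta_{12}\wedge\theta$ and $d\bar\theta=i\theta_{12}\wedge\bar\theta$; (iii) conclude $d(f\tilde f\,\theta)=(Df)\wedge\tilde f\,\theta+(D\tilde f)\wedge f\,\theta$ under the hypothesis of (1), integrate, and extract the scalar identity; (iv) repeat verbatim with $\bar\theta$ for (2).
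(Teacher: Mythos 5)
Your proposal is correct and follows essentially the same route as the paper: the paper's proof likewise observes that under the degree hypothesis $f\tilde f\,\q$ (resp. $f\tilde f\,\bq$) is a globally defined $1$-form satisfying $(Df)\wedge\tilde f\q+(D\tilde f)\wedge f\q=d(f\tilde f\q)$, and concludes by Stokes. Your additional elaboration of the weight bookkeeping and the conversion via $\bq\wedge\q$ is consistent with the paper's conventions.
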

\begin{proof} Note that if $r_1+r_2=s_1+s_2+1$ (resp. $r_1+r_2=s_1+s_2-1$), then $f \tilde{f} \q$ (resp. $f\!\tilde{f} \bq$) is a globally defined $1$-form, and 
$$(Df)\wedge \tilde{f}\q+(D\tilde{f})\wedge f\q=d(f\tilde{f}\q),~~~\left({\rm resp.}~ (Df)\wedge \tilde{f}\q+(D\tilde{f})\wedge f\q=d(f\tilde{f}\bq)\right),$$
from which, the conclusion follows. 
\end{proof}

At the end of this subsection, we show the expressions of some invariants under a local complex coordinate. Let $z=u+iv$ be a local complex coordinate on $(M^2,x^*h)$ such that
$$ \theta=e^\s dz,~~~e=2e^{-\s}x_{\bar z},$$
where $\sigma$ is a local defined real-valued function on $M^2$. Define
$$\Phi=h(\nabla_{x_z}x_z, n), ~~~\Psi=h(\nabla_{x_z}x_z, \bn),$$
then it is easy to see that $\phi=e^{-2\s}\Phi,~\psi=e^{-2\s}\Psi$, and $\Phi\Psi dz^4$ are globally defined quartic differential on $M^2$. In the local complex
coordinate $z$, we have 
\begin{gather*}
\theta_{12}=i(\s_{\bar z}d\bar z-\s_zdz),~~~~~~\theta_{34}=i(\r dz- \bar \rho d\bar z),~~~~~~H_1=e^{-\s}(H_z-\r H),\\
H_{\bar1}=e^{-\s}(H_{\bar z}+\bar \rho H),~~~~~~\phi_{\bar1}=e^{-3\s}(\Phi_{\bar z}+\bar{\rho}\Phi),~~~~~~\psi_{\bar1}=e^{-3\s}(\Psi_{\bar z}-\bar{\rho}\Psi),\\
\phi_{\bar1\bar1}=e^{-4\s}\big(\Phi_{\bar z \bar z}-2(\s_{\bar z}-\bar{\rho})\Phi_{\bar z}+(\bar{\rho}^2-2\s_{\bar z}\bar{\rho}+\bar{\rho}_{\bar z})\Phi\big),~
\psi_{\bar1\bar1}=e^{-4\s}\big(\Psi_{\bar z \bar z}-2(\s_{\bar z}+\bar{\rho})\Psi_{\bar z}+(\bar{\rho}^2+2\s_{\bar z}\bar{\rho}-\bar{\rho}_{\bar z})\Psi\big),
\end{gather*}
where $\rho=h(\nabla_{x_z}n, \bn)/2$.
The Codazzi equation \eqref{eq-Codazzi} can be rewritten as
\begin{equation}\label{eq-Coda-complex}
\Phi_{\bar z}+\bar{\rho}\Phi=\frac{e^{2\s}}{2}(H_z-\r H)-K(x_{z},n,x_{\bar z},x_z),~~~~~~\Psi_{\bar z}-\bar{\rho}\Psi=\frac{e^{2\s}}{2}(\bH_z+\r \bH)-K(x_{z},\bn, x_{\bar z},x_z).
\end{equation}
\subsection{Some conformal invariants of $x(M^2)$}\label{subsec-confinv}
Let $\widetilde h=e^{2\lambda}h\in[h]$ be another metric on $N^4$. We make a convention that the geometric object with respect to $\widetilde h$ is denoted by the same symbol as for $h$ but with a tilde on it. It is easy to check that
$$\widetilde\o=e^{\l}\o,~~~\widetilde\O=e^{\l}\O,~~~\widetilde{\o_{12}}=\o_{12}+\lambda_1\o_2-\lambda_2\o_1,~~~\widetilde{\o_{34}}=\o_{34}+\lambda_3\o_4-\lambda_4\o_3,$$
$$\widetilde{\II_1}=\II_1+(\lambda_1+i\lambda_2)\O-(\lambda_3+i\lambda_4)\o,~~~\widetilde{\II_2}=\II_2+(\lambda_1-i\lambda_2)\O-(\lambda_3+i\lambda_4)\bo,$$
where $\l_j=e_j(\l)$. The following conformal change of curvature tensor is well-known,    
\begin{equation}\label{eq-curv}
\begin{split}
e^{-2\lambda}\widetilde{K}(X,Y,Z,W)=&K(X,Y,Z,W)-|\nabla \l|^2\Big(h(X,Z)h(Y,W)-h(X,W)h(Y,Z)\Big)\\
&-h(Y,W)\Big(D^2\lambda(X,Z)-X(\lambda)Z(\lambda)\Big)-h(X,Z)\Big(D^2\lambda(Y,W)-Y(\l)W(\l)\Big)\\
&+h(X,W)\Big(D^2\l(Y,Z)-Y(\l)Z(\l)\Big)+h(Y,Z)\Big(D^2\l(X,W)-X(\l)W(\l)\Big),
\end{split}
\end{equation}
where $D^2\lambda$ is the Hessian of $\lambda$ with respect to $h$. 
Applying these transformation formulas to the surface $x$, we have immediately 
\begin{equation}\label{eq-Hphs}
\widetilde \q=e^{\lambda}\q,~~~e^\l\widetilde H=H-n(\lambda),~~~e^\l\widetilde \phi=\phi,~~~e^\l\widetilde\psi=\psi,~~~\widetilde{B}(e,e)= B(e,e). 
\end{equation}
\begin{equation}\label{eq-Ric-con}
\widetilde{\text{Ric}}(e,e)=\text{Ric}(e,e)-2\big(D^2\l(e,e)-e(\l)e(\l)\big). 
\end{equation}
\begin{definition}
We call zeros of $\phi$ (resp. $\psi$) isotropic (resp. anti-isotropic) points. 
\end{definition}
Note that both isotropic points and anti-isotropic points are conformal invariants. The umbilical points are characterized as the common zeros of $\phi$ and $\psi$. 

Next, we search some locally defined conformal invariants involving the curvature tensor of ambient space.
\begin{lemma}\label{rk-con-inv}
$$K(\be,n,\be,n),~~~K(\be,\bn,\be,\bn),~~~K(\be,e,n,\bn),$$
$$K(e,n,\bn,n)+K(\be,e,e,n),~~~K(e,\bn,n,\bn)+K(\be,e,e,\bn)$$
are locally defined conformal invariants. 
\end{lemma}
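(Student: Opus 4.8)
The plan is to verify directly that each of the five listed quantities is unchanged when we replace $h$ by $\widetilde h = e^{2\lambda}h$, using the conformal transformation formula \eqref{eq-curv} for the curvature tensor together with the transformation rules \eqref{eq-complex}, \eqref{eq-Hphs} for the complex frame fields. The key observation that makes everything work is a cancellation pattern: the correction terms in \eqref{eq-curv} are all of the schematic form $h(\cdot,\cdot)\,Q(\cdot,\cdot)$, where $Q(X,Y) = D^2\lambda(X,Y) - X(\lambda)Y(\lambda)$ (or $Q = |\nabla\lambda|^2\,h$), i.e. each correction term carries a factor $h(X,Z)$, $h(X,W)$, $h(Y,Z)$, or $h(Y,W)$. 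So when I feed in arguments taken from $\{e,\bar e,n,\bar n\}$, a correction term survives only if the pair of slots tied together by that $h$-factor is one of $(e,\bar e)$, $(n,\bar n)$ (since $h(e,e)=h(\bar e,\bar e)=h(n,n)=h(\bar n,\bar n)=0$, $h(e,\bar e)=2$, $h(n,\bar n)=2$, and $h$ vanishes between a tangent and a normal complex vector). The first step is therefore to record $h(e,\bar e)=2$, $h(n,\bar n)=2$, and that all other pairings among $e,\bar e,n,\bar n$ vanish; this is the bookkeeping engine for the whole lemma.

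Next I would treat the five expressions in turn. For $K(\bar e,n,\bar e,n)$: in \eqref{eq-curv} with $(X,Y,Z,W)=(\bar e,n,\bar e,n)$, the $h$-pairs appearing are $h(X,Z)=h(\bar e,\bar e)=0$, $h(Y,W)=h(n,n)=0$, $h(X,W)=h(\bar e,n)=0$, $h(Y,Z)=h(n,\bar e)=0$, and the $|\nabla\lambda|^2$ term has pairings $h(\bar e,\bar e)h(n,n)-h(\bar e,n)h(n,\bar e)=0$; hence every correction term vanishes and $e^{-2\lambda}\widetilde K(\bar e,n,\bar e,n)=K(\bar e,n,\bar e,n)$. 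Combined with $\widetilde{\bar e}=e^\lambda\bar e$ and $\widetilde n = e^\lambda n$ (from \eqref{eq-complex} and $\widetilde\o=e^\lambda\o$, $\widetilde\O=e^\lambda\O$, so $\widetilde e = e^{-\lambda}e$... — here I must be careful: the vectors scale inversely to the coframe), the four factors of $e^\lambda$ (or $e^{-\lambda}$) cancel the $e^{2\lambda}$ and we get invariance. Wait — I should double-check the weight: $\widetilde K$ as a $(0,4)$-tensor evaluated on the $\widetilde h$-orthonormal frame equals, on the $h$-orthonormal frame, $e^{2\lambda}$ times the corrected $K$; but $\bar e, n$ built from the $\widetilde h$-orthonormal frame are $e^{-\lambda}$ times those built from the $h$-frame, giving $e^{-4\lambda}$, and $e^{-4\lambda}\cdot e^{2\lambda}\cdot(\text{stuff}) $ does not cancel. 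The correct statement must be that $\widetilde K(\widetilde{\bar e},\widetilde n,\widetilde{\bar e},\widetilde n) = K(\bar e,n,\bar e,n)$ directly as a statement about the respective orthonormal frames — i.e. one uses \eqref{eq-curv} with the $h$-vectors, obtains $e^{-2\lambda}\widetilde K(\text{$h$-frame vectors}) = K(\text{$h$-frame})$ since corrections vanish, and then converts to the $\widetilde h$-frame vectors which absorb exactly the factor $e^{-2\lambda}$ needed. This is routine once the weights are tracked; I will present it cleanly. The case $K(\bar e,\bar n,\bar e,\bar n)$ is identical. For $K(\bar e,e,n,\bar n)$ with $(X,Y,Z,W)=(\bar e,e,n,\bar n)$: now $h(X,Z)=h(\bar e,n)=0$, $h(Y,W)=h(e,\bar n)=0$, $h(X,W)=h(\bar e,\bar n)=0$, $h(Y,Z)=h(e,n)=0$, and the $|\nabla\lambda|^2$ term pairs $h(\bar e,n)h(e,\bar n)-h(\bar e,\bar n)h(e,n)=0$; again all corrections vanish, and invariance follows.

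The last two, $K(e,n,\bar n,n)+K(\bar e,e,e,n)$ and its conjugate-type partner $K(e,\bar n,n,\bar n)+K(\bar e,e,e,\bar n)$, are the interesting ones, since here the individual curvature components are \emph{not} invariant — the correction terms do survive — and the point is that the non-invariant pieces cancel against each other in the sum. For $K(e,n,\bar n,n)$ with $(X,Y,Z,W)=(e,n,\bar n,n)$: the surviving $h$-pair is $h(X,Z)=h(e,\bar n)=0$? No, $h(e,\bar n)=0$; $h(Y,W)=h(n,n)=0$; $h(X,W)=h(e,n)=0$; $h(Y,Z)=h(n,\bar n)=2$ — so the term $-h(Y,Z)\big(D^2\lambda(X,W)-X(\lambda)W(\lambda)\big) = -2\big(D^2\lambda(e,n)-e(\lambda)n(\lambda)\big)$ survives. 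Also the term $+h(Y,Z)\big(D^2\lambda(X,W)-\cdots\big)$? Looking at \eqref{eq-curv}, the $h(Y,Z)$ coefficient appears as $+h(Y,Z)(D^2\lambda(X,W)-X(\lambda)W(\lambda))$, while $h(X,W)$ is paired with $+(D^2\lambda(Y,Z)-\cdots)$ but $h(X,W)=0$; meanwhile the $|\nabla\lambda|^2$ term pairs $h(X,Z)h(Y,W)-h(X,W)h(Y,Z)=0-0=0$. For $K(\bar e,e,e,n)$ with $(X,Y,Z,W)=(\bar e,e,e,n)$: here $h(X,Z)=h(\bar e,e)=2$ (survives, paired with $-(D^2\lambda(Y,W)-Y(\lambda)W(\lambda)) = -(D^2\lambda(e,n)-e(\lambda)n(\lambda))$, multiplied by the $h(X,Z)$ giving $-2(D^2\lambda(e,n)-e(\lambda)n(\lambda))$ — wait, sign: the $h(X,Z)$ coefficient in \eqref{eq-curv} is $-h(X,Z)(D^2\lambda(Y,W)-Y(\lambda)W(\lambda))$); other pairings $h(Y,W)=h(e,n)=0$, $h(X,W)=h(\bar e,n)=0$, $h(Y,Z)=h(e,e)=0$, and the $|\nabla\lambda|^2$ term $h(\bar e,e)h(e,n)-h(\bar e,n)h(e,e)=0$. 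So $K(\bar e,e,e,n)$ picks up $-2(D^2\lambda(e,n)-e(\lambda)n(\lambda))$ — hmm, that's the same sign as $K(e,n,\bar n,n)$, not the opposite. Let me recount $K(e,n,\bar n,n)$: the relevant coefficient was on $h(Y,Z)=h(n,\bar n)=2$, and in \eqref{eq-curv} that coefficient is $+h(Y,Z)(D^2\lambda(X,W)-X(\lambda)W(\lambda))$, with $(X,W)=(e,n)$, giving $+2(D^2\lambda(e,n)-e(\lambda)n(\lambda))$. So $K(e,n,\bar n,n)$ gets $+2(\cdots)$ and $K(\bar e,e,e,n)$ gets $-2(\cdots)$, and the sum $K(e,n,\bar n,n)+K(\bar e,e,e,n)$ has these cancel, leaving an invariant — up to the overall conformal weight, which I track as before. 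The hardest part of writing this up is simply not making a sign or index error in this last computation; I will organize it by writing \eqref{eq-curv} evaluated on each argument tuple, striking out the terms that die because of a vanishing $h$-pairing, and then collecting. The fifth expression is the complex conjugate of the fourth (replace $n$ by $\bar n$), so it follows by the same computation or by conjugation symmetry.

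The main obstacle, then, is purely bookkeeping: correctly assembling \eqref{eq-curv} in the complex frame and confirming the sign cancellation in the two combined expressions. No global or analytic input is needed — this is entirely a pointwise linear-algebra verification — so once the cancellation pattern above is made precise the lemma follows.
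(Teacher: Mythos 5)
Your proposal is correct and is essentially the paper's own argument: the paper proves this lemma precisely by the direct verification from \eqref{eq-curv} that you carry out, i.e.\ the correction terms die because $h(e,e)=h(n,n)=h(e,n)=h(e,\bar n)=0$, and in the last two expressions the surviving terms proportional to $h(n,\bar n)=2$ and $h(e,\bar e)=2$ cancel in the sum exactly as you compute. (The paper additionally remarks that all five quantities equal the corresponding components of the Weyl tensor via \eqref{eq-Weyl}, which gives the covariance at a glance; and note that in this paper ``conformal invariant'' means transforming homogeneously by a power of $e^{\lambda}$, which settles the weight-tracking concern you raise mid-proof.)
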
 
\begin{proof}
The conclusion can be verified directly by \eqref{eq-curv}. 

An alternative way to see the covariance of these quantities is that they are all equal to the values taken correspondingly by the Weyl tensor $W$, which could be seen from the following well-known formula: 
\begin{equation}\label{eq-Weyl}
{\small
\begin{split}
W(X,Y,Z,W)=&K(X,Y,Z,W)+\frac{1}{6}R\Big(h(X,Z)h(Y,W)-h(Y,Z)h(X,W)\Big)-\\
&\frac{1}{2}\Big(h(X,Z)\text{Ric}(Y,W)+h(Y,W)\text{Ric}(X,Z)-h(Y,Z)\text{Ric}(X,W)-h(X,W)\text{Ric}(Y,Z)\Big),
\end{split}
}
\end{equation}
where $R$ (resp. $\text{Ric}$) is the scalar (resp. Ricci) curvature of $(N^4,[h])$.
\end{proof}
\begin{lemma}\label{lem-con-inv}
$$2\phi_{\bar 1\bar1}+\Big(\frac{1}{4}\mathrm{Ric}(e, e)+\bH\,\bps+H\bph\Big)\phi,~~~~~~2\psi_{\bar 1\bar1}+\Big(\frac{1}{4}\mathrm{Ric}(e, e)+\bH\,\bps+H\bph\Big)\psi$$
are locally defined conformal invariants.
\end{lemma}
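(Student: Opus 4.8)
The plan is to show that each of the two quantities transforms correctly under a conformal change $\widetilde h=e^{2\l}h$, i.e.\ that it scales homogeneously of the appropriate weight. Since $\phi$ and $\psi$ play symmetric roles (with $\psi$ obtained from $\phi$ essentially by swapping $n\leftrightarrow\bar n$ and $H\leftrightarrow\bar H$), it suffices to carry out the computation for the first expression
$$2\phi_{\bar1\bar1}+\Big(\tfrac14\mathrm{Ric}(e,e)+\bH\bps+H\bph\Big)\phi,$$
and the second follows by the analogous argument. From \eqref{eq-Hphs} we already know $e^\l\widetilde\phi=\phi$, $e^\l\widetilde H=H-n(\l)$, $e^\l\widetilde\psi=\psi$, and $\widetilde\q=e^\l\q$; what remains is to compute how the second complex covariant derivative $\phi_{\bar1\bar1}$ transforms, and how $\mathrm{Ric}(e,e)$ transforms (the latter is \eqref{eq-Ric-con}).

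The key computational step is the transformation of $\phi_{\bar1\bar1}$. First I would record how the connection forms $\q_{12},\q_{34}$ change — these are the restrictions of the formulas $\widetilde{\o_{12}}=\o_{12}+\l_1\o_2-\l_2\o_1$ and $\widetilde{\o_{34}}=\o_{34}+\l_3\o_4-\l_4\o_3$ — and hence how the weighted exterior derivative $D$ appearing in \eqref{eq-2ndpsi} is modified. Applying $D$ once to $e^\l\widetilde\phi=\phi$ and using the definition $\phi_1\q+\phi_{\bar1}\bq=D\phi=d\phi-i(2\q_{12}-\q_{34})\phi$, one gets a formula for $\widetilde\phi_1,\widetilde\phi_{\bar1}$ in terms of $\phi_1,\phi_{\bar1}$, and the tangential/normal derivatives of $\l$ (the $\q_{34}$ correction is the crucial one because it introduces the $n(\l)$ terms). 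Differentiating once more, using the weight-shifted operator $D\phi_{\bar1}=d\phi_{\bar1}-i(\q_{12}-\q_{34})\phi_{\bar1}$ from \eqref{eq-2ndpsi}, produces $\widetilde\phi_{\bar1\bar1}$ modulo terms involving $\l$ and its first and second covariant derivatives (in particular $D^2\l(e,e)$, $e(\l)^2$, $|\nabla\l|^2$, and $n(\l)\cdot\phi_{\bar1}$-type cross terms). The strategy is then to collect all these correction terms and check that they are \emph{exactly} cancelled by the variation of $\tfrac14\mathrm{Ric}(e,e)$ via \eqref{eq-Ric-con}, together with the variations of $\bH\bps$ and $H\bph$ — note $e^{2\l}\widetilde{\bH}\widetilde{\bps}=\bH\bps-n(\l)\bps$ and similarly $e^{2\l}\widetilde H\widetilde\bph=H\bph-\overline{n(\l)}\bph$, which is where the $n(\l)$-type terms from the double differentiation get absorbed.

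The main obstacle I expect is bookkeeping: one must be careful that $n(\l)$ is complex (it is $e_3(\l)+ie_4(\l)$ up to the complex-frame conventions of \eqref{eq-complex}), that the weights attached to $D$ at each stage are the correct ones from \eqref{eq-2ndpsi}, and that the Hessian $D^2\l$ appearing via the curvature transformation \eqref{eq-curv}–\eqref{eq-Ric-con} matches the second-derivative terms generated by commuting $D$ past $d\l$. A cleaner route, which I would try in parallel, is to avoid recomputing everything from scratch by differentiating the already-established Codazzi-type identity \eqref{eq-sderi} — which expresses $2\phi_{\bar1\bar1}-H_{1\bar1}$ through curvature terms of the ambient space — together with the conformal transformation of $H_{1\bar1}$ (obtainable from $\triangle^\perp\vec H$ in \eqref{eq-lapH} and standard conformal change of the Laplacian on the normal bundle), and the conformal invariance of the Weyl-tensor components listed in Lemma~\ref{rk-con-inv}. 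In either approach the end result is that both displayed quantities scale like $e^{-3\l}$ times their tilded versions (matching the weight $e^\l\widetilde\phi=\phi$ of $\phi$ paired with the weight of a second covariant derivative), and multiplying by $\q^{?}$ of the appropriate power would give honest globally defined conformal invariants; for the lemma as stated it is enough to exhibit the homogeneous scaling.
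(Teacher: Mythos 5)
Your plan is essentially the paper's proof: the authors simply record the conformal transformation formulas for $\widetilde\phi_{\bar1}$ and $\widetilde\phi_{\bar1\bar1}$ (the latter acquiring the correction terms $\frac{n(\l)\bph+\bn(\l)\bps}{2}\phi+\frac{D^2\lambda(e,e)-e(\l)e(\l)}{4}\phi$) and observe that these are exactly cancelled by the transformations of $\tfrac14\mathrm{Ric}(e,e)$ and $\bH\bps+H\bph$ via \eqref{eq-curv}--\eqref{eq-Ric-con}, which is precisely the computation you outline. One bookkeeping correction to your illustrative formulas: since $e^\l\widetilde H=H-n(\l)$, conjugation gives $e^{2\l}\widetilde{\bH}\widetilde{\bps}=\bH\bps-\bn(\l)\bps$ and $e^{2\l}\widetilde H\widetilde{\bph}=H\bph-n(\l)\bph$ (you have the two conjugations swapped), and it is this combination that matches the $\big(n(\l)\bph+\bn(\l)\bps\big)\phi$ term produced by the double covariant differentiation.
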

\begin{proof}
With respect to the conformal change $\tilde h=e^{2\lambda}h$, it is straightforward to verify that
$$\widetilde\phi_{\bar1}=e^{-2\l}(\phi_{\bar1}+\frac{\l_1+i\l_2}{2}\phi),~~~~~~\widetilde\psi_{\bar1}=e^{-2\l}(\psi_{\bar1}+\frac{\l_1+i\l_2}{2}\psi),$$
and
$$\widetilde\phi_{\bar1\bar1}=e^{-3\l}\big(\phi_{\bar1\bar1}+\frac{n(\l)\bph+\bn(\l)\bps}{2}\phi+\frac{D^2\lambda(e,e)-e(\l)e(\l)}{4}\phi\big),$$
$$\widetilde\psi_{\bar1\bar1}=e^{-3\l}\big(\psi_{\bar1\bar1}+\frac{n(\l)\bph+\bn(\l)\bps}{2}\psi+\frac{D^2\lambda(e,e)-e(\l)e(\l)}{4}\psi\big).$$
Then the conclusion follows from these identities and \eqref{eq-curv}  $\sim$  \eqref{eq-Ric-con}.
\end{proof}
\begin{remark} \label{rk-Binvariant}
Using the invariants stated in Lemma~\ref{lem-con-inv}, we can combine a globally defined conformal invariant as follows: 
\beq
\mathfrak{Re}\Big[\Big((\nabla^\perp)^2 B+\frac{1}{2}\mathrm{Ric}\otimes B \Big)(e,e,\be,\be)+4\ca(\vec H)\Big], 
\eeq
whose covariance can also be checked directly by \eqref{eq-Ric-con} and 
\begin{equation*}
(\widetilde{\nabla}^{\perp})^2 \widetilde{B}(e,e,\be,\be)=(\nabla^{\perp})^2 B(e,e,\be,\be)+B(\be,\be)(\lambda) B(e,e)+\big(D^2\l(\be,\be)-\be(\l)\be(\l)\big)B(e,e). 
\end{equation*}
\end{remark}
\begin{remark}
In the local complex coordinate $z$, the aforementioned conformal invariants 
are given by 
$$K(x_{z},n,x_{z},n),~~~ K(x_{z},\bn,x_{z},\bn),~~~K(x_{z},x_{\bar z},n,\bn),$$
$$K(x_{\bar z},n,\bn,n)+4e^{-2\s}K(x_z,x_{\bar z},x_{\bar z},n),~~~~~~K(x_{\bar z},\bn,n,\bn)+4e^{-2\s}K(x_z,x_{\bar z},x_{\bar z},\bn)$$
$$2[\Phi_{\bar z \bar z}-2(\s_{\bar z}-\bar{\rho})\Phi_{\bar z}+(\bar{\rho}^2-2\s_{\bar z}\bar{\rho}+\bar{\rho}_{\bar z})\Phi]+[K(x_{\bar z},n,x_{\bar z},\bn)+\bH\,\bPs+H\bPh]\Phi,$$
$$2[\Psi_{\bar z \bar z}-2(\s_{\bar z}+\bar{\rho})\Psi_{\bar z}+(\bar{\rho}^2+2\s_{\bar z}\bar{\rho}-\bar{\rho}_{\bar z})\Psi]+[K(x_{\bar z},n,x_{\bar z},\bn)+\bH\,\bPs+H\bPh]\Psi.$$
\end{remark}

Next, we use the invariants $\phi$ and $\psi$ to rewrite the Willmore functional $\mathcal{W}$. It follows from \eqref{eq-Hphs} that 
$$(|\phi|^2+|\psi|^2)|\theta|^2=e^{-2\s}(|\Phi|^2+|\Psi|^2)|dz|^2$$
is a globally defined conformal invariant. 
\begin{definition}
For a surface $x: M^2\to (N^4,[h])$ without umbilical points, we call
$$ g\triangleq (|\phi|^2+|\psi|^2)|\theta|^2=e^{-2\s}(|\Phi|^2+|\Psi|^2)|dz|^2$$
the M\"obius metric of $x$. 
\end{definition}
\begin{remark} \label{rk-Moebius}
It follows from \eqref{eq-tracefree} that the Willmore functional 
$$\mathcal{W}(x)=\int_{M^2}  |\overset{\circ} B|^2 dA_x=4\int_{M^2} (|\phi|^2+|\psi|^2)dA_x$$
is exactly 
the volume of $M^2$ with respect to the M\"obius metric $g$ up a dilation, if $x$ is umbilic-free.  
\end{remark}
\begin{remark}
Using the Gauss equation \eqref{eq-Gauss} and Ricci equation \eqref{eq-Ricci}, we have 
$$|H|^2+K_{1212}-K_{1234}=4|\phi|^2+R_{1212}-R_{1234},~~~~~~|H|^2+K_{1212}+K_{1234}=4|\psi|^2+R_{1212}+R_{1234},$$
which implies 
the functionals 
$\mathcal{W}^+$ and $\mathcal{W}^-$ defined by Montiel and Urbano in \cite{Montiel-Urbano} can be expressed as below: 
{
$$\mathcal{W}^+(x)=4\int_{M^2} |\psi|^2dA_x+2\pi(\chi+\chi^\perp),$$
$$\mathcal{W}^-(x)=4\int_{M^2} |\phi|^2dA_x+2\pi(\chi-\chi^\perp),$$}
where $\chi$ (resp. $\chi^\perp$) is the Euler characteristic of tangent (resp. normal) bundle. {We point out that the definition of $R_{1234}$ and $K_{1234}$ here differs from the one in \cite{Montiel-Urbano} by a sign}. 
\end{remark}
\begin{remark}\label{rk-dim3-inv}
The above surface theory also applies to surfaces in 3-dimensional conformal manifold $(N^3,[h])$. In fact, we can conformally embed $(N^3,[h])$ into some $(N^4,[h])$ so that 
the imaginary part of the normal vector field $n$ is parallel, for which  
$$\bH=H,~~~\phi=\psi,~~~\mathcal{W}(x)=4\int_{M^2} |\phi|^2dA_x.$$
\end{remark}

\subsection{An Index theorem for (anti-)isotropic points} 

Let $p$ (resp. $q$) be an isolated isotropic (resp. anti-isotropic)  point for $x: M^2\to (N^4,[h])$, and  $D_p$ (resp. $D_q$) be a disc with $p$  (resp. $q$) as the only isotropic (resp. anti-isotropic) point.  We denote by $\gamma_p$ (resp. $\gamma_q$) the boundary of a disc $D_p$ (resp. $D_q$). Fix a metric $h\in[h]$, define the index of $p$  and $q$ by
\beq\label{eq-index}
 \mathrm{Ind}_p\phi=\frac{1}{2\pi i}\int_{\gamma_p}d\log \phi,~~~~~~\mathrm{Ind}_q\psi=\frac{1}{2\pi i}\int_{\gamma_q}d\log \psi.
 \eeq
It is clear 
from \eqref{eq-Hphs} that $\mathrm{Ind}_p\phi$ and $\mathrm{Ind}_q\psi$ are conformal invariants. 
\par\medskip
It is easy to check that
$$\Theta_1\triangleq d(\log(\phi))-i(2\q_{12}-\q_{34}),~~~~~~~~~~~~~\Theta_2\triangleq d(\log(\psi))-i(2\q_{12}+\q_{34})$$
are two globally defined 1-froms on $M^2\backslash\{\phi=0\}$ and $M^2\backslash\{\psi=0\}$, respectively.
\begin{theorem}\label{thm-index}
Let $x:M^2\rightarrow (N^4,[h])$ be a closed oriented surfaces. If its isotropic points are isolated, which are denoted by $\{p_j\}$, then
$$\sum_{j} \mathrm{Ind}_{p_j}\phi=-2\chi+\chi^\perp.$$
If its anti-isotropic points are isolated, which are denoted by $\{q_k\}$, then
$$\sum_{k} \mathrm{Ind}_{q_k}\psi=-2\chi-\chi^\perp.$$
\end{theorem}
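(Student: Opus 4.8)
The plan is to recognize the two identities as instances of the Poincar\'e--Hopf index theorem applied to suitable line bundles over $M^2$. Observe that the $1$-forms $\Theta_1 = d\log\phi - i(2\theta_{12} - \theta_{34})$ and $\Theta_2 = d\log\psi - i(2\theta_{12} + \theta_{34})$ are globally defined and closed on $M^2 \setminus \{\phi = 0\}$ and $M^2 \setminus \{\psi=0\}$ respectively; indeed $d\Theta_1 = -i(2\,d\theta_{12} - d\theta_{34})$, and by the structure equations $d\theta_{12} = -R_{1212}\,dA_x$ (Gauss) while $d\theta_{34} = -R_{1234}\,dA_x$ (Ricci / normal curvature), so $\Theta_1$ is closed away from the zeros of $\phi$ and likewise $\Theta_2$ away from the zeros of $\psi$.

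The key computation is then an application of Stokes' theorem on $M^2$ with small discs $D_{p_j}$ removed around each isotropic point. First I would write
$$\int_{M^2 \setminus \bigcup_j D_{p_j}} d\Theta_1 = -\sum_j \int_{\gamma_{p_j}} \Theta_1,$$
where the boundary orientation accounts for the sign. On $\gamma_{p_j}$, the terms $2\theta_{12} - \theta_{34}$ are smooth and integrate to $o(1)$ as the discs shrink, so $\int_{\gamma_{p_j}} \Theta_1 \to 2\pi i \cdot \mathrm{Ind}_{p_j}\phi$ by the definition \eqref{eq-index}. On the other hand, letting the discs shrink, $\int_{M^2 \setminus \bigcup D_{p_j}} d\Theta_1 \to \int_{M^2} d\Theta_1 = -i\int_{M^2}(2\,d\theta_{12} - d\theta_{34}) = i\int_{M^2}(2R_{1212} - R_{1234})\,dA_x$. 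By the Gauss--Bonnet theorem $\int_{M^2} R_{1212}\,dA_x = 2\pi\chi$ and by the analogous formula for the normal bundle $\int_{M^2} R_{1234}\,dA_x = 2\pi\chi^\perp$. Combining, $2\pi i \sum_j \mathrm{Ind}_{p_j}\phi = -i(2\cdot 2\pi\chi - 2\pi\chi^\perp)$, i.e. $\sum_j \mathrm{Ind}_{p_j}\phi = -2\chi + \chi^\perp$. The argument for $\psi$ is verbatim the same with $\theta_{34}$ replaced by $-\theta_{34}$, yielding $\sum_k \mathrm{Ind}_{q_k}\psi = -2\chi - \chi^\perp$.

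The main obstacle, and the point requiring care, is justifying the sign conventions and the passage to the limit: one must verify that $d\theta_{12}$ and $d\theta_{34}$ carry the claimed signs relative to $R_{1212}$ and $R_{1234}$ as normalized in this paper (recall the remark that $R_{1234}$ differs by a sign from \cite{Montiel-Urbano}), and that the orientation of $\gamma_{p_j}$ as the boundary of $M^2 \setminus \bigcup D_{p_j}$ is opposite to its orientation as the boundary of $D_{p_j}$, which is the orientation used in \eqref{eq-index}. It is also worth confirming that near an isolated zero $p_j$, the function $\phi$ admits a well-defined winding number: this follows because $\phi\,\theta^2$ (or rather $\phi$ itself, as a section of a complex line bundle with the transition behaviour dictated by the $\theta_{12}, \theta_{34}$ terms) is locally a nonvanishing complex-valued function on the punctured disc, so $d\log\phi$ is a well-defined closed $1$-form there with integral $2\pi i$ times an integer. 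Once the bookkeeping of signs and orientations is pinned down, the proof is a short application of Stokes together with the two Gauss--Bonnet-type identities.
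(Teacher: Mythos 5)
Your proposal is correct and follows essentially the same route as the paper: the same $1$-forms $\Theta_1,\Theta_2$, the identity $d\Theta_1=i(2R_{1212}-R_{1234})\,dA_x$, Stokes' theorem on $M^2$ with the discs removed, and Gauss--Bonnet for the tangent and normal bundles. The only cosmetic difference is that you dispose of the smooth boundary term $\int_{\gamma_{p_j}}(2\theta_{12}-\theta_{34})$ by shrinking the discs, whereas the paper evaluates it exactly by a second application of Stokes on each $D_j$; both are valid.
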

\begin{proof}
We just give the proof of the first index formula. Let $D_j$ be a small disc containing $p_j$ as the only isotropic point such that $\{D_j\}$ are disjoint.  The boundary of $D_j$ is denoted by $\gamma_j$. Set $M^*=M^2\backslash\{\cup_jD_j\}$, then there is no isotropic point in $M^*$ and $\partial M^*=\cup_j\gamma_j^-$, where we use the superscript $-$ to indicate that the orientation
on $\gamma_j$ induced from $M^*$ and $D_j$ are different. It follows from the definition that
\begin{equation*}
d\Theta_1=-id(2\q_{12}-\q_{34})=i(2R_{1212}-R_{1234})dA_x.
\end{equation*}
By Stokes formula we have 
\begin{equation}\label{eq-index-int}
i\int_{M^*}(2R_{1212}-R_{1234})dA_x=\int_{M^*}d\Theta_1=\int_{\partial M^*}\Theta_1=-\sum_j \left(\int_{\gamma_j}d\log\Phi -i\int_{\gamma_j}(2\q_{12}-\q_{34})\right), 
\end{equation}
\begin{equation}\label{eq-index-dTheta1}
\int_{\gamma_j}(2\q_{12}-\q_{34})=\int_{D_j}d(2\q_{12}-\q_{34})=-\int_{D_j}(2R_{1212}-R_{1234})dA_x.
\end{equation}
Combining \eqref{eq-index}, \eqref{eq-index-int} and \eqref{eq-index-dTheta1}, we can obtain 
$$ 2i\int_{M}R_{1212}dV_h-i\int_{M}R_{1234}dV_h=-2\pi\,i \sum_j\mathrm{Ind}_{p_j}\Phi. $$
The conclusion now follows from the Gauss-Bonnet formula.
\end{proof}
Applying this theorem to the $3$-dimensional case, we can obtain the following well-known result.  
\begin{corollary}
Let $x:M^2\rightarrow (N^3,[h])$ be a closed oriented surfaces. If its umbilical points are isolated, which are denoted by $\{p_j\}$, then
$$\sum_{j} \mathrm{Ind}_{p_j}\phi=-2\chi.$$
\end{corollary}

\section{The first variation of the Willmore functional}\label{1st-variation}
In this section, we calculate the first variation of the Willmore functional for closed surfaces in $4$-dimensional conformal manifold. The computation also applies to the variation satisfying suitable boundary conditions for compact surfaces with boundary.  In some special  $4$-dimensional Riemannian manifolds, whether minimal surfaces are Willmore is  discussed. 

Let $x: M^2\to (N^4,[h])$ be a closed surface. Consider a variation of $x$
$$X: M^2\times {\mathbb R}\to (N^4,[h])$$
such that $x_t=X(\cdot,t): M^2\to (N^4,[h])$ is a surface for each $t$ and $x_0=x$. The variation vector field
$V$ is defined by $V=dx(\frac{\partial}{\partial t})$. We point out that the following calculation also applies to compact surfaces with boundary if we further require the variation to satisfy $x_t=x$ and $dx_t=dx$ on the boundary.  

Fix $h\in[h]$, as in Section~\ref{sec-con-inv}, we choose a local orthonormal frame $\{e_1,e_2, e_3, e_4\}$ for $(N^4,h)$ with dual basis $\{\o_1,\o_2, \o_3, \o_4\}$, such that on $x_t(M^2)$, $\{e_1, e_2\}$ (resp. $\{e_3, e_4\}$) are tangent (resp. normal) vector fields. The complex notation defined in \eqref{eq-complex} will be used.

We write
\begin{align}
&X^*\o=x^*_t\o+\epsilon\,dt=\q+\epsilon\,dt, \label{eq-tangentvari}\\
&X^*\O=x_t^*\O+v\,dt=v\,dt,\\
&X^*\o_{12}=x_t^*\o_{12}+B_1\,dt=\theta_{12}+B_1\,dt,\label{eq-xo12}\\
&X^*\o_{34}=x_t^*\o_{34}+B_2\,dt=\theta_{34}+B_2\,dt,\label{eq-xo34}\\
&X^*(\II_1)=x_t^*\II_{1}+A_1\,dt=H\q+2\bps\,\bq+A_1\,dt, \label{eq-xii1}\\
&X^*(\II_2)=x_t^*\II_{2}+A_2\,dt=H\bq+2\phi\,\q+A_2\,dt, \label{eq-xii2}
\end{align}
where $\e, v, A_1, A_2$ (resp. $B_1, B_2$) are locally defined complex-valued (resp. real-valued) functions on $M^2$.
On $T^*(M^2\times \mathbb{R})$, we decompose the exterior differentiation $d$  as
$$d=d_m+dt\wedge\frac{\partial}{\partial t}.$$
It follows from \eqref{eq-do}  $\sim$  \eqref{eq-dII2} that
\begin{equation}\label{eq-pqt}
\frac{\partial \theta}{\partial t}=d_m \e+i\e\theta_{12}-(iB_1+\frac{v\bH}{2}+\frac{\bv H}{2})\q-(v\bph+\bv\bps)\bq.
\end{equation}
\begin{equation}\label{eq-dv}
d_m\,v+i\q_{34}v=\frac{1}{2}(A_2-\bep H-2\,\e\,\phi)\q+\frac{1}{2}(A_1-\e H-2\,\bep\,\bps)\bq,
\end{equation}
\begin{equation}\label{eq-pII1t}
\begin{split}
\frac{\partial (x_t^*\II_1)}{\partial t}=d_m A_1+i(\theta_{12}+\theta_{34})A_1-i(B_1+B_2)x_t^*\II_1+\frac{1}{4}\Big[K(e,n,\be,e)(\bep\q-\e\bq)\\
+K(e,n,\be,n)\bv\q+K(e,n,\be,\bn)v\q+K(e,n,e,n)\bv\bq+K(e,n,e,\bn)v\bq\Big],
\end{split}
\end{equation}
\begin{equation}\label{eq-pII2t}
\begin{split}
\frac{\partial (x_t^*\II_2)}{\partial t}=d_m A_2-i(\theta_{12}-\theta_{34})A_2+i(B_1-B_2)x_t^*\II_2+\frac{1}{4}\Big[K(\be,n,\be,e)(\bep\q-\e\bq)\\
+K(\be,n,\be,n)\bv\q+K(\be,n,\be,\bn)v\q+K(\be,n,e,n)\bv\bq+K(\be,n,e,\bn)v\bq\Big].
\end{split}
\end{equation}
Using \eqref{eq-dv}, we have 
\beq\label{eq-v1A1}
v_{\bar1}=\frac{1}{2}(A_1-\e H-2\,\bep\,\bps),~~~~~~v_1=\frac{1}{2}(A_2-\bep H-2\,\e\,\phi),
\eeq
where $v_{\bar1}$ and $v_1$ are the covariant derivatives of $v$ as defined in \eqref{eq-cov}.  

It is easy to see that
$$\mathcal{W}(x_t)=2\int_{M^2} (|\phi|^2+|\psi|^2)dA_{x_t}=\frac{1}{4i}\int_{M^2}(x_t^*\II_1\wedge x_t^*\overline{\II_1}-x_t^*\II_2\wedge x_t^*\overline{\II_2}+2|H|^2\bq\wedge\q).$$
Note that $\bq\wedge H\q=\bq\wedge x_t^*\II_1=x_t^*\II_2\wedge \q$, using which we have  
\begin{equation}\label{eq-pHq}
\begin{split}
\frac{\partial}{\partial t}(|H|^2\bq\wedge\q)&=\bH \bq\wedge \frac{\partial (H\q)}{\partial t}-H\q\wedge \frac{\partial (\bH \bq)}{\partial t}\\
&=2i\mathfrak{Im}\Big(\bH\bq\wedge\frac{\partial (x_t^*\II_1)}{\partial t}-\bH(x_t^*\II_1-H\q)\wedge\frac{\partial \bq}{\partial t}\Big)\\
&=2i\mathfrak{Im}\Big(\frac{\partial (x_t^*\II_2)}{\partial t}\wedge\bH\q+\bH(x_t^*\II_2-H\bq)\wedge\frac{\partial \q}{\partial t}\Big),
\end{split}
\end{equation}
where $\mathfrak{Im}$ means taking the imaginary part. 
From 
\eqref{eq-II} 
and \eqref{eq-pHq}, we can obtain 
\begin{equation*}
\begin{split}
&\frac{\partial}{\partial t}(x_t^*\II_1\wedge x_t^*\overline{\II_1}-x_t^*\II_2\wedge x_t^*\overline{\II_2}+2|H|^2\bq\wedge\q)\\
=&2\mathfrak{Im}\Big[\frac{\partial (x_t^*\II_1)}{\partial t}\wedge(x_t^*\overline{\II_1}-\bH\,\bq)-\bH(x_t^*\II_1-H\q)\wedge\frac{\partial \bq}{\partial t}\Big]-\\
&2\mathfrak{Im}\Big[\frac{\partial (x_t^*\II_2)}{\partial t}\wedge(x_t^*\overline{\II_2}-\bH\,\q)-\bH(x_t^*\II_2-H\bq)\wedge\frac{\partial \q}{\partial t}\Big]\\
=&4\mathfrak{Im}\Big(\psi\,\frac{\partial (x_t^*\II_1)}{\partial t}\wedge\theta-\bH\,\bps\,\bq\wedge\frac{\partial \bq}{\partial t}-\bph\,\frac{\partial (x_t^*\II_2)}{\partial t}\wedge\bq+\bH\,\phi\,\q\wedge\frac{\partial \q}{\partial t}\Big).
\end{split}
\end{equation*}
Then it follows from \eqref{eq-pqt}  $\sim$  
\eqref{eq-v1A1} that
\begin{equation*}
\begin{split}
&\frac{\partial}{\partial t}(x_t^*\II_1\wedge x_t^*\overline{\II_1}-x_t^*\II_2\wedge x_t^*\overline{\II_2}+2|H|^2\bq\wedge\q)\\
=&8\mathfrak{Im}\Big(\psi\,\big(d_m v_{\bar1}+i(\theta_{12}+\theta_{34})v_{\bar1}\big)\wedge\q+d_m(\bep|\psi|^2\q)-\\
&~~~~~~~\bph\,\big(d_m v_{1}-i(\theta_{12}-\theta_{34})v_{1}\big)\wedge\bq-\,d_m(\epsilon|\phi|^2\bq)\Big)+\\
&4\mathfrak{Re}\Big(\frac{v\bps}{4}K(\be,\bn,\be,\bn)+v\,\psi\big(\frac{1}{4}\mathrm{Ric}(e, e)+\bH\,\bps+H\bph\big)+\\
&~~~~~~~\frac{v\phi}{4}K(e,\bn,e,\bn)+v\,\bph\big(\frac{1}{4}\mathrm{Ric}(\be, \be)+H\,\psi+\bH\phi\big)\Big)\bq\wedge\q,
\end{split}
\end{equation*}
where we have used 
$$K(e,n,e,\bn)=\mathrm{Ric}(e, e), ~~~K(\be,\bn,\be,n)=\mathrm{Ric}(\be, \be).$$
By direct calculation, it is easy to verify that
$$\psi\,(d_m v_{\bar1}+i(\theta_{12}+\theta_{34})v_{\bar1})\wedge\q=v\,\psi_{\bar1\bar1}\,\bq\wedge\q+d_m(v_{\bar1}\psi\q)-d_m(v\psi_{\bar1}\q),$$
$$\bph\,\big(d_m v_{1}-i(\theta_{12}-\theta_{34})v_{1}\big)\wedge\bq=-v\,\bph_{11}\,\bq\wedge\q+d_m(v_{1}\bph\,\bq)-d_m(v\bph_{1}\bq),$$
where $\bph_1=\overline{\phi_{\bar1}},~\bph_{11}=\overline{\phi_{\bar1\bar1}}$. Then we complete the calculation of the first variation. 
\begin{proposition}
The first variation formula of the Willmore functional is
\begin{equation}\label{eq-1stvar}
\begin{split}
\frac{\p}{\p t}\mathcal{W}(x_t)=2\mathfrak{Re}\Big\{\int_{M^2}v\Big[&2\psi_{\bar1\bar1}+\big(\frac{1}{4}\mathrm{Ric}(e, e)+\bH\,\bps+H\bph\big)\psi+\frac{1}{4}K(\be,\bn,\be,\bn)\bps+\\
&2\bph_{11}+\big(\frac{1}{4}\mathrm{Ric}(\be, \be)+H\,\psi+\bH\phi\big)\bph+\frac{1}{4}K(e,\bn,e,\bn)\phi\Big]dA_{x_t}\Big\},
\end{split}
\end{equation}
which implies the Euler-Lagrange equation is
\begin{equation}\label{eq-Eul}
\begin{split}
&\Big(2\psi_{\bar1\bar1}+\big(\frac{1}{4}\mathrm{Ric}(e, e)+\bH\,\bps+H\bph\big)\psi\Big)+\frac{1}{4}K(\be,\bn,\be,\bn)\bps\\
+&\Big(2\bph_{11}+\big(\frac{1}{4}\mathrm{Ric}(\be, \be)+H\,\psi+\bH\phi\big)\bph\Big)+\frac{1}{4}K(e,\bn,e,\bn)\phi=0.
\end{split}
\end{equation}
\end{proposition}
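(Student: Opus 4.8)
The plan is to push the derivative $\partial_t$ through a period-integral expression for $\mathcal{W}(x_t)$ built from pulled-back global forms, then integrate by parts until the integrand is manifestly linear in the normal part $v$ of the variation, and finally read off the Euler--Lagrange operator. The key preliminary identity is
$$\mathcal{W}(x_t)=\frac{1}{4i}\int_{M^2}\Big(x_t^*\II_1\wedge x_t^*\overline{\II_1}-x_t^*\II_2\wedge x_t^*\overline{\II_2}+2|H|^2\,\bq\wedge\q\Big),$$
which follows from \eqref{eq-II} and \eqref{eq-tracefree} by a one-line wedge computation: expanding $\II_1=H\q+2\,\bps\,\bq$ and $\II_2=H\bq+2\phi\q$, the $|H|^2$-contributions of the two wedge terms cancel against $2|H|^2\,\bq\wedge\q$, and since $\bq\wedge\q=2i\,dA_{x_t}$ the right-hand side collapses to $2\int_{M^2}(|\phi|^2+|\psi|^2)\,dA_{x_t}=\mathcal{W}(x_t)$. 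This form is convenient because $x_t^*\II_1,\,x_t^*\II_2,\,\q$ are pullbacks of forms on $N^4$, so their $t$-derivatives are governed by the structure equations \eqref{eq-do}--\eqref{eq-dII2} after pulling everything back to $M^2\times\mathbb{R}$ and splitting $d=d_m+dt\wedge\partial_t$.

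First I would record $\partial_t\q$, $d_m v$, $\partial_t(x_t^*\II_1)$ and $\partial_t(x_t^*\II_2)$ --- these are \eqref{eq-pqt}--\eqref{eq-pII2t} --- and use \eqref{eq-dv}, \eqref{eq-v1A1} to eliminate the auxiliary functions $A_1,A_2$ in favour of the covariant derivatives $v_1,v_{\bar1}$ of the normal variation. Substituting into $\partial_t$ of the integrand, and treating $\partial_t(|H|^2\bq\wedge\q)$ by means of $\bq\wedge H\q=\bq\wedge x_t^*\II_1=x_t^*\II_2\wedge\q$ as in \eqref{eq-pHq}, together with $K(e,n,e,\bn)=\mathrm{Ric}(e,e)$ and $K(\be,\bn,\be,n)=\mathrm{Ric}(\be,\be)$, the integrand becomes the sum of a $d_m$-exact $1$-form and a pointwise term of the form $\mathfrak{Re}\big(v\cdot(\cdots)\big)\,\bq\wedge\q$. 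The remaining manipulation is integration by parts through identities such as
$$\psi\big(d_m v_{\bar1}+i(\q_{12}+\q_{34})v_{\bar1}\big)\wedge\q=v\,\psi_{\bar1\bar1}\,\bq\wedge\q+d_m(v_{\bar1}\psi\q)-d_m(v\psi_{\bar1}\q)$$
and the analogous one with $\bph$, which transfer all covariant derivatives off $v$ and onto $\psi,\phi$ modulo further exact forms. Since $M^2$ is closed --- or, in the compact-with-boundary case, the conditions $x_t=x$ and $dx_t=dx$ on $\partial M^2$ kill all boundary contributions --- every exact term integrates to zero by Stokes, yielding \eqref{eq-1stvar}. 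Because $v$ is an arbitrary complex-valued function encoding the full normal component of the variation field, vanishing of the first variation for all $v$ is equivalent to vanishing of the bracketed expression, i.e.\ \eqref{eq-Eul}.

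I expect the main difficulty to be purely bookkeeping: carrying the curvature components $K(e,n,\cdot,\cdot)$ and $K(\be,n,\cdot,\cdot)$ through \eqref{eq-pII1t}--\eqref{eq-pII2t} without sign slips, and deciding which terms are genuinely $d_m$-exact. A good internal check is that the Euler--Lagrange operator must be a conformal invariant: the piece $2\psi_{\bar1\bar1}+\big(\tfrac14\mathrm{Ric}(e,e)+\bH\,\bps+H\bph\big)\psi$ is one of the invariants of Lemma~\ref{lem-con-inv}, $K(\be,\bn,\be,\bn)$ is one of those in Lemma~\ref{rk-con-inv}, and similarly for the conjugate pair, so the left-hand side of \eqref{eq-Eul} should reassemble into the globally defined invariant $\mathfrak{Re}\big[\big((\nabla^\perp)^2B+\tfrac12\mathrm{Ric}\otimes B\big)(e,e,\be,\be)+4\ca(\vec H)\big]$ of Remark~\ref{rk-Binvariant}, plus a Weyl-type curvature term built from $\tfrac14 K(\be,\bn,\be,\bn)\bps+\tfrac14 K(e,\bn,e,\bn)\phi$. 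Confirming this matching is the most reliable way to validate the long computation.
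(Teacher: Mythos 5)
Your proposal follows the paper's own proof essentially verbatim: the same starting identity $\mathcal{W}(x_t)=\frac{1}{4i}\int(x_t^*\II_1\wedge x_t^*\overline{\II_1}-x_t^*\II_2\wedge x_t^*\overline{\II_2}+2|H|^2\bq\wedge\q)$, the same use of $\bq\wedge H\q=\bq\wedge x_t^*\II_1=x_t^*\II_2\wedge\q$ to handle the $|H|^2$ term, the same substitution of \eqref{eq-pqt}--\eqref{eq-v1A1} and the same integration-by-parts identities transferring derivatives from $v$ onto $\psi$ and $\bph$. The concluding consistency check against the conformal invariants of Lemmas~\ref{rk-con-inv} and \ref{lem-con-inv} matches what the paper records in Remark~\ref{rk-conEuL}.
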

\begin{remark}\label{rk-conEuL}
It follows from Lemma~\ref{rk-con-inv} and Lemma~\ref{lem-con-inv} that the four terms in \eqref{eq-Eul} are locally defined conformal invariants. 

Using Remark~\ref{rk-Binvariant}, we can also write down the Euler-Lagrange equation of 
$\mathcal{W}$ by globally defined conformal invariants as follows: 
\beq\label{eq-Glcon}
\mathfrak{Re}\Big[\left((\nabla^\perp)^2 B+\frac{1}{2}Ric\otimes B\right)(e,e,\be,\be)+4\ca(\vec H)+W(e,B(\be,\be),e)^\perp\Big]=0, 
\eeq
where $W(\cdot,\cdot,\cdot)$ denotes 
the Weyl tensor of $(1,3)$ type.

In the local complex coordinate $z$, \eqref{eq-Eul} can be rewritten as
\begin{equation*}\label{eq-Eul-com}
\begin{split}
\!\!\!\!\!&\Big[2\big(\Psi_{\bar z \bar z}-2(\s_{\bar z}+\bar{\rho})\Psi_{\bar z}+(\bar{\rho}^2+2\s_{\bar z}\bar{\rho}-\bar{\rho}_{\bar z})\Psi\big)+\big(\mathrm{Ric}(x_{\bar z},x_{\bar z})+\bH\,\bPs+H\bPh\big)\Psi\Big]+K(x_{z},\bn,x_{z},\bn)\bPs\\
\!\!\!\!\!+&\Big[2\big(\bPh_{z z}-2(\s_{z}-\rho)\bPh_z+(\rho^2-2\s_{z}\rho+\rho_{z})\bPh\big)+\big(\mathrm{Ric}(x_{ z},x_{ z})+H\,\Psi+\bH\Phi\big)\bPh\Big]+K(x_{\bar z},\bn,x_{\bar z},\bn)\Phi=0.
\end{split}
\end{equation*}
\end{remark}

In order to discuss whether minimal surfaces are Willmore, we also write down the Euler-Lagrange equation of $\mathcal{W}$ as follows: 
\begin{equation}\label{eq-Eul-H}
\begin{split}
&\bH_{1\bar1}+\bH_{\bar1 1}+\frac{\bH}{4}\Big(6K_{1212}-\text{Ric}(e,\be)+4\big(|\psi|^2+|\phi|^2\big)\Big)-\frac{H}{4}\Big(\text{Ric}(\bn,\bn)-8\bph\psi\Big)\\
+&\frac{1}{8}(\nabla_{\bn} K)(e,\be,e,\be)+\frac{1}{2}K(\be,\bn,\be,\bn)\bps+\frac{1}{2}K(e,\bn,e,\bn)\phi+\frac{1}{2}\text{Ric}(e,e)\psi+\frac{1}{2}\text{Ric}(\be,\be)\bph=0,
\end{split}
\end{equation}
which just follows from \eqref{eq-sderi}, the second Bianchi identity, $K(\be,n,e,n)=-\text{Ric}(n,n),$ and
$$K(\bn,n,\be,e)+K(\be,n,\bn,e)-K(\be,e,\be,e)=6K_{1212}-K_{1234}-\text{Ric}(e,\be),$$
$$K(n,\bn,\be,e)+K(\be,\bn,n,e)-K(\be,e,\be,e)=6K_{1212}+K_{1234}-\text{Ric}(e,\be).$$
Using \eqref{eq-lapH}, we can also write \eqref{eq-Eul-H} as the following global form:  
$$\triangle^\perp \vec H+\ca(\vec H)+\frac{1}{2}\mathfrak{Re}\Big(\nabla_eK(\be,e,\be)+2K(\be, B(e,e),\be)-2K(\be, \vec H, e)\Big)^\perp-\frac{1}{2}K(e,\be,e,\be)\vec H=0.$$

For a minimal surface in $4$-dimensional locally symmetric space $(N^4, h)$, to be Willmore, it should satisfy
$$\mathfrak{Re}\big(K(\be, B(e,e),\be)\big)^\perp=0.$$
It follows from \eqref{eq-Weyl} that if, in addition, $(N^4, h)$ is also Einstein, 
then the Willmore equation of minimal surfaces becomes 
\beq \label{eq-Einstein}
\mathfrak{Re}\big(W(\be, B(e,e),\be)\big)^\perp=0.
\eeq
Known examples of such kind of spaces contain  

(1) $4$-manifolds locally isometric to a product of two surfaces with equal constant curvature; 

(2) Locally irreducible, locally symmetric $4$-manifolds; 

(3) $4$-dimensional  
compact Einstein manifolds with nonnegative curvature operator, or nonnegative isotropic curvature (it was proved such manifolds are  locally symmetric, see \cite{Tachibana} and \cite{Micallef-Wang}). 


As stated before, Montiel and Urbano proved in \cite{Montiel-Urbano} that super-minimal surfaces in $\mathbb{C}P^2$ with positive spin are Willmore. The following proposition can be seen a generalization of this result. 
\begin{proposition}\label{prop-einstein}
In locally symmetric, self-dual (resp. anti-self-dual) Einstein $4$-manifolds, 
super-minimal surfaces with positive spin (resp. negative spin) are Willmore. 
\end{proposition}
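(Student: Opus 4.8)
The plan is to verify that the Willmore equation for minimal surfaces, in the global form
$$\triangle^\perp \vec H+\ca(\vec H)+\frac{1}{2}\mathfrak{Re}\Big(\nabla_eK(\be,e,\be)+2K(\be, B(e,e),\be)-2K(\be, \vec H, e)\Big)^\perp-\frac{1}{2}K(e,\be,e,\be)\vec H=0,$$
collapses, under the hypotheses, to the single scalar condition $\mathfrak{Re}\big(W(\be, B(e,e),\be)\big)^\perp=0$ already isolated in \eqref{eq-Einstein}, and then to show this holds for super-minimal surfaces of positive spin in a self-dual space (and symmetrically in the anti-self-dual case). First I would invoke minimality ($\vec H=0$) together with local symmetry ($\nabla K=0$) to kill the $\nabla_e K$ term, the $K(\be,\vec H,e)$ term and the $\ca(\vec H)=0$, $K(e,\be,e,\be)\vec H=0$ terms, reducing the equation to $\mathfrak{Re}\big(K(\be,B(e,e),\be)\big)^\perp=0$; then, using the Einstein condition and the Weyl decomposition \eqref{eq-Weyl}, the metric-trace terms $h(\be,\be)=0$ and $h(\be,B(e,e))=0$ (as $B(e,e)$ is normal and $\be$ is tangent) make $K(\be,B(e,e),\be)$ and $W(\be,B(e,e),\be)$ differ only by terms proportional to $h(\be,\be)=0$, so the equation becomes \eqref{eq-Einstein}.

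Next I would unwind \eqref{eq-Einstein} using \eqref{eq-II}: since $\vec H=0$, we have $B(e,e)=2\bps\,\bn+2\bph\,n$, so
$$W(\be, B(e,e),\be)^\perp = \big(2\bps\, W(\be,\bn,\be,\cdot)+2\bph\, W(\be,n,\be,\cdot)\big)^\perp,$$
and by the algebraic symmetries of $W$ (and $K(\be,n,\be,n)$, $K(\be,\bn,\be,\bn)$ being the relevant curvature components, cf.\ Lemma~\ref{rk-con-inv}) this is governed by $W(\be,\bn,\be,\bn)$ and $W(\be,n,\be,n)$. For a super-minimal surface of positive spin, Remark~\ref{rk-orientaion} gives $\psi=0$, hence only the term $2\bph\,W(\be,n,\be,\cdot)$ survives; the remaining task is to show this term vanishes because $(N^4,h)$ is self-dual. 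Here I would use that, with the orientation convention of Remark~\ref{rk-orientaion}, the complexified $2$-vector $\be\wedge n$ (equivalently $e_1\wedge e_3$ type combinations) lies in the appropriate self-duality eigenspace $\Lambda^2_\pm$, so that self-duality $W^-=0$ (resp. anti-self-duality $W^+=0$) forces $W(\be,n,\be,n)=0$ and the relevant mixed components to vanish — exactly as sketched in the commented-out passage referencing Derdzi\'nski. The anti-self-dual/negative-spin case is identical with the roles of $\phi,\psi$ and $n,\bn$ interchanged.

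The main obstacle I anticipate is the careful bookkeeping of the self-duality argument: one must check precisely which complex $2$-planes spanned by $\{e,\be,n,\bn\}$ sit in $\Lambda^2_+$ versus $\Lambda^2_-$ under the stated orientation conventions, and confirm that \emph{all} the Weyl components appearing in $W(\be,B(e,e),\be)^\perp$ (not merely the diagonal $W(\be,n,\be,n)$) are controlled by the vanishing half $W^\mp$. This amounts to writing $W$ as a map $\Lambda^2\to\Lambda^2$, noting it is block-diagonal into $W^+\oplus W^-$ on an (anti-)self-dual manifold, and evaluating on the explicit $2$-forms dual to $\be\wedge n$ etc.; once the frame is fixed as in Remark~\ref{rk-orientaion} this is a finite linear-algebra check, but it is the step where sign and orientation errors are easiest to make. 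The rest — minimality, local symmetry, and the Einstein--Weyl reduction — is routine given the identities already established in Sections~\ref{sec-con-inv} and~\ref{1st-variation}.
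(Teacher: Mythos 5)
Your proposal is correct and follows essentially the same route as the paper: reduce to \eqref{eq-Einstein} using minimality, local symmetry and the Einstein/Weyl decomposition, expand $W(\be,B(e,e),\be)^\perp$ via $B(e,e)=2\bps\,\bn+2\bph\,n$, discard the mixed components by trace-freeness of $W$, set $\psi=0$ for positive spin, and kill the remaining component $W(e,\bn,e,\bn)$ (equivalently its conjugate $W(\be,n,\be,n)$) because $e\wedge\bn$ lies in $\Lambda^2_-$ and the manifold is self-dual. The final orientation/eigenspace check you flag as the delicate step is exactly the one the paper asserts, so nothing is missing.
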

\begin{proof}
We use the notations given in Remark~\ref{rk-orientaion}. 
Locally, \eqref{eq-Einstein} is equivalent to 
$$W(\be,\bn,\be,\bn)\bps+W(e,\bn,e,\bn)\bph=0,$$
where we have used $W(e,\bn,e,n)=0$ and $W(\be,\bn,\be,n)=0$, which follows from the trace-free property of the Weyl tensor. 

We only prove the self-dual case. For super-minimal surfaces with positive spin, we have $\psi=0$.  Note that $e\wedge \bn \in \Lambda^2_{-} (TM^2)$ (the eigenspace of the Hodge star operator $*$ with eigenvalue $1$). 
Therefore, we can derive that $W(e,\bn,e,\bn)=W^{-}(e,\bn,e,\bn)=0$, which completes the proof. 
\end{proof}
Due to the work of Derdzi{\'n}ski \cite{Derdzinski}, $4$-manifolds stated as follows satisfy the assumption of Proposition~\ref{prop-einstein}.    

(1) Orientable, locally irreducible, and locally symmetric $4$-manifolds (such manifolds are proved to be self-dual for some orientaion); 

(2) Self-dual K\"ahler-Einstein $4$-manifolds. 

Concrete examples of self-dual K\"ahler-Einstein $4$-manifolds contain complex space forms, and Ricci-flat k\"ahler surfaces (such as the $K_3$-surface endowed with any K\"ahler metric provided by positive solution to the Calabi conjecture). Since complex curves and totally real minimal surfaces (minimal Lagrangian surfaces) in self-dual K\"ahler-Einstein $4$-manifolds are super-minimal surfaces with positive spin, it follows from Proposition~\ref{prop-einstein} that they are all Willmore.  
\begin{remark}
For surfaces in 3-dim conformal manifold $(N^3, [h])$, it follows from Remark~\ref{rk-dim3-inv} and \eqref{eq-Eul-com} that the Euler-Lagrange equation of Willmore functional is
\begin{equation}
\mathfrak{Re}\Big[\big(\Phi_{\bar z \bar z}-2\s_{\bar z}\Phi_{\bar z}\big)+\big(K(x_{\bar z},n,x_{\bar z},n)+H\bPh\big)\Phi\Big]=0.
\end{equation}
\end{remark}

\section{Willmore $2$-spheres in $4$-dimensional locally symmetric spaces}\label{sec-2sphere}
In this section, we continue to study whether minimal surfaces in $4$-dimensional locally symmetric spaces are Willmore. Some holomorphic differentials are constructed firstly, with which we proved that to be Willmore a  minimal $2$-sphere is required to be super-minimal.  

Let $z$ be a local complex coordinate on $(M^2, x*h)$, the notation given at the end of subsection~\ref{subsec-cov} will be used. Firstly, we restate \eqref{eq-stru} as follows: 
\begin{equation}\label{eq-stru-complex}
\begin{split}
&\nabla_{\xz} \,\xz=2\s_z\,\xz+\frac{1}{2}\Psi\,n+\frac{1}{2}\Phi\,\bn,~~~~~~~~~~~~\nabla_{\xz}\, \xbz=\frac{e^{2\s}}{4}\bH\,n+\frac{e^{2\s}}{4}H\,\bn,\\
&\nabla_{\xz} \,n=-H\, \xz-2e^{-2\s}\Phi\,\xbz+\r\, n,~~~~~~~~~\nabla_{\xbz}\, n=-H\,\xbz-2e^{-2\s}\bPs\,\xz-\br\, n.
\end{split}
\end{equation}
By direct calculation, we can obtain
\beq\label{eq-1holo}
\frac{\partial}{\partial {\bar z}}\Big(K(x_{z},n,x_{z},\bn)+(\bH\Phi+H\Psi)\Big)\!\!=\!\!(\nabla_{x_{\bar z}}K)(x_{z},n,x_{z},\bn)-\frac{e^{2\s}}{2}K(x_{ z},\bn,\vec{H},n+\bn)+(\bH_{\bar z}-\overline{\r} \bH)\,\Phi+(H_{\bar z}+\overline\r H)\Psi.
\eeq
\beq\label{eq-2holo}
\frac{\partial}{\partial {\bar z}}K(x_{z},\bn,x_{z},\bn)=2\br K(x_{z},\bn,x_{z},\bn)+(\nabla_{x_{\bar z}}K)(x_{z},\bn,x_{z},\bn)+\frac{e^{2\s}\bH}{2}K(n,\bn,x_{ z},\bn)-2\bH K(x_{z},\xbz,\xz,\bn).
\eeq
\beq\label{eq-3holo}
\frac{\partial}{\partial {\bar z}}K(x_{z},n,x_{z},n)=-2\br K(x_{z},n,x_{z},n)+(\nabla_{x_{\bar z}}K)(x_{z},n,x_{z},n)+\frac{e^{2\s}H}{2}K(\bn,n,x_{z},n)-2H K(\xz,\xbz,\xz,n).
\eeq
\beq\label{eq-4holo}
\begin{split}
\frac{\partial}{\partial {\bar z}}K(x_{z},\bn,\xbz,\xz)=&(2\s_z+\br)K(x_{z},\bn,\xbz,\xz)+(\nabla_{x_{\bar z}}K)(x_{z},\bn,\xbz,\xz)\\
&-\frac{\bPh}{2}K(\xz,\bn,\xz,n)-\frac{\bPs}{2}K(\xz,\bn,\xz,\bn)+\frac{e^{2\s}H}{4}K(\xz,\bn,\xbz,\bn)\\
&+\bH \Big(\frac{e^{2\s}}{4}K(\xbz,\xz,n,\bn)+K(\xz,\xbz,\xz,\xbz)+\frac{e^{2\s}}{4}K(\xz,\bn,\xbz,n)\Big).
\end{split}
\eeq
\beq\label{eq-5holo}
\begin{split}
\frac{\partial}{\partial {\bar z}}K(x_{z},n,\xz,\xbz)=&(2\s_z-\br)K(x_{z},n,\xz,\xbz)+(\nabla_{x_{\bar z}}K)(x_{z},n,\xz,\xbz)\\
&+\frac{\bPs}{2}K(\xz,\bn,\xz,n)+\frac{\bPh}{2}K(\xz,n,\xz,n)-\frac{e^{2\s}\bH}{4}K(\xz,n,\xbz,n)\\
&+H \Big(\frac{e^{2\s}}{4}K(\xbz,\xz,n,\bn)-K(\xz,\xbz,\xz,\xbz)-\frac{e^{2\s}}{4}K(\xz,n,\xbz,\bn)\Big).
\end{split}
\eeq
\beq\label{eq-6holo}
\begin{split}
\frac{\partial}{\partial { z}}K(x_{z},n,\xz,n)=&2(2\s_z+\r)K(x_{z},n,\xz,n)+(\nabla_{x_{z}}K)(x_{z},n,\xz,n)~~~~~~~~~~~~~~~~~~~~~~\\
&-\Big(K(\xz,n,n,\bn)+4e^{-2\s}K(\xz,\xbz,\xz,n)\Big)\Phi,
\end{split}
\eeq
\beq\label{eq-7holo}
\begin{split}
\frac{\partial}{\partial { z}}K(x_{z},\bn,\xz,\bn)=&2(2\s_z-\r)K(x_{z},\bn,\xz,\bn)
+(\nabla_{x_{z}}K)(x_{z},\bn,\xz,\bn)~~~~~~~~~~~~~~~~~~~~~~\\
&+\Big(K(\xz,\bn,n,\bn)-4e^{-2\s}K(\xz,\xbz,\xz,\bn)\Big)\Psi.
\end{split}
\eeq
\beq\label{eq-8holo}
\begin{split}
\frac{\partial}{\partial {z}}K(\xz,n,x_{z},\bn)=&(\nabla_{x_{z}}K)(x_{z},\bn,\xz,n)+\frac{\Psi}{2}\Big(K(\xz,n,n,\bn)-4e^{-2\s}K(\xz,\xbz,\xz,n)\Big)\\
&+4\s_z K(x_{z},\bn,\xz,n)
-\frac{\Phi}{2}\Big(K(\xz,\bn,n,\bn)+4e^{-2\s}K(\xz,\xbz,\xz,\bn)\Big).
\end{split}
\eeq
\begin{lemma}\label{lem-1diff}
Let $x: M^2\rightarrow (N^4,h)$ be a surface in locally symmetric space, if its mean curvature vector field is parallel, then the globally defined differential
$$\Big(K(x_{z},n,x_{z},\bn)+(\bH\Phi+H\Psi)\Big)dz^2$$
is holomorphic.
\end{lemma}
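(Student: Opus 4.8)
The plan is to show that the $d\bar z$-component of the differential $\big(K(x_z,n,x_z,\bar n)+(\bar H\Phi+H\Psi)\big)dz^2$ vanishes, which is exactly the content of holomorphicity for a globally defined quadratic differential. First I would invoke \eqref{eq-1holo}, which already computes
$$\frac{\partial}{\partial\bar z}\Big(K(x_z,n,x_z,\bar n)+(\bar H\Phi+H\Psi)\Big)=(\nabla_{x_{\bar z}}K)(x_z,n,x_z,\bar n)-\tfrac{e^{2\s}}{2}K(x_z,\bar n,\vec H,n+\bar n)+(\bar H_{\bar z}-\bar\rho\bar H)\Phi+(H_{\bar z}+\bar\rho H)\Psi.$$
So the whole problem reduces to checking that the right-hand side is zero under the two hypotheses: $(N^4,h)$ locally symmetric, i.e.\ $\nabla K\equiv 0$, and $\vec H$ parallel in the normal bundle.

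The local symmetry hypothesis kills the term $(\nabla_{x_{\bar z}}K)(x_z,n,x_z,\bar n)$ outright. For the remaining three terms I would translate "$\vec H$ parallel" into the complex-coordinate language of \eqref{eq-stru-complex} and the covariant-derivative conventions at the end of Subsection~\ref{subsec-cov}. Parallelism of $\vec H=(H\bar n+\bar H n)/2$ means $\nabla^\perp\vec H=0$; reading off the normal connection from \eqref{eq-stru-complex} (the $\rho$, $\bar\rho$ terms measuring the rotation of the frame $\{n,\bar n\}$), this is equivalent to $H_z-\rho H=0$ and $H_{\bar z}+\bar\rho H=0$ together with their conjugates, i.e.\ $H_{\bar 1}=0$ and $\bar H_{\bar 1}=0$ in the notation of the earlier display $H_{\bar1}=e^{-\s}(H_{\bar z}+\bar\rho H)$. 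In particular $H_{\bar z}+\bar\rho H=0$ and $\bar H_{\bar z}-\bar\rho\bar H=\overline{(H_z-\rho H)}\cdot(\text{unit})=0$, so the last two terms $(\bar H_{\bar z}-\bar\rho\bar H)\Phi+(H_{\bar z}+\bar\rho H)\Psi$ both vanish. Finally, the middle term $-\tfrac{e^{2\s}}{2}K(x_z,\bar n,\vec H,n+\bar n)$ is linear in $\vec H$; here I would note either that $\nabla^\perp\vec H=0$ on a surface whose $\vec H$ is parallel forces no constraint by itself, so one needs instead to observe that this term is produced in the derivation of \eqref{eq-1holo} precisely by differentiating $\vec H$, and under the parallel hypothesis it should be reorganized — more carefully, rechecking \eqref{eq-1holo}, the term $K(x_z,\bar n,\vec H,n+\bar n)$ combines with the $(\bar H_{\bar z}-\bar\rho\bar H)\Phi+(H_{\bar z}+\bar\rho H)\Psi$ terms to form $\tfrac12\langle(\nabla^\perp_{x_{\bar z}}\vec H),\,\cdot\,\rangle$-type expressions, all of which vanish when $\vec H$ is parallel.

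The step I expect to be the real obstacle is this last bookkeeping: making sure that the middle curvature term $-\tfrac{e^{2\s}}{2}K(x_z,\bar n,\vec H,n+\bar n)$ is genuinely absorbed by the parallelism hypothesis and is not an independent non-vanishing piece. The cleanest route is to re-derive \eqref{eq-1holo} keeping $\vec H$ as a single normal vector field: differentiating $K(x_z,n,x_z,\bar n)$ along $x_{\bar z}$, one uses \eqref{eq-stru-complex} for $\nabla_{x_{\bar z}}x_z$, $\nabla_{x_{\bar z}}n$, $\nabla_{x_{\bar z}}\bar n$, and every term carrying a derivative of the frame $\{n,\bar n\}$ recombines, together with $\partial_{\bar z}(\bar H\Phi+H\Psi)$ and the Codazzi equations \eqref{eq-Coda-complex}, into the $\nabla K$ term plus terms proportional to the components of $\nabla^\perp_{x_{\bar z}}\vec H$. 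Setting $\nabla K=0$ and $\nabla^\perp\vec H=0$ then gives $\partial_{\bar z}\big(K(x_z,n,x_z,\bar n)+(\bar H\Phi+H\Psi)\big)=0$, i.e.\ the differential is holomorphic; globality was already noted in the remark following \eqref{eq-conv2} and in the discussion preceding Lemma~\ref{lem-con-inv} (the quantity $K(\bar e,n,\bar e,\bar n)=K(x_z,n,x_z,\bar n)e^{-2\s}\cdot(\text{unit})$ is a conformal/isometric invariant and $\bar H\Phi+H\Psi$ transforms correctly to match), so no separate argument for well-definedness is needed.
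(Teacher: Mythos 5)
Your strategy coincides with the paper's: both proofs are a direct application of \eqref{eq-1holo}, checking that each term on its right-hand side vanishes under the hypotheses. Two of the three checks you perform are exactly the paper's: local symmetry kills $(\nabla_{x_{\bar z}}K)(x_{z},n,x_{z},\bn)$, and parallelism of $\vec H$ (equivalently $H_1=H_{\bar 1}=0$, i.e.\ $H_z-\r H=0$ and $H_{\bar z}+\bar\r H=0$ together with their conjugates) kills $(\bH_{\bar z}-\bar{\r}\bH)\Phi+(H_{\bar z}+\bar\r H)\Psi$.

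The gap is your treatment of the middle term $-\tfrac{e^{2\s}}{2}K(x_{z},\bn,\vec H,n+\bn)$. The mechanism you propose for it is not correct: this term does not come from differentiating $\vec H$ and cannot be recombined with $(\bH_{\bar z}-\bar{\r}\bH)\Phi+(H_{\bar z}+\bar\r H)\Psi$ into components of $\nabla^\perp\vec H$. In the derivation of \eqref{eq-1holo} it arises algebraically from the second-fundamental-form terms $\nabla_{x_{\bar z}}x_z=\tfrac{e^{2\s}}{2}\vec H$ entering the slots of $K$ (see \eqref{eq-stru-complex}); it is zeroth order in $\vec H$, so parallelism does not act on it through derivatives. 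The paper disposes of it by recording the third identity $K(x_{z},\bn,\vec{H},n+\bn)=0$, and the reason this holds is elementary: since $K$ is antisymmetric in its last two arguments and $n+\bn=2e_3$, one has $K(x_{z},\bn,\vec H,n+\bn)=-2\,\mathfrak{Im}(H)\,K(x_{z},\bn,e_3,e_4)$; when $\vec H$ is parallel and not identically zero one may choose the parallel orthonormal normal frame with $e_3=\vec H/|\vec H|$, in which $\mathfrak{Im}(H)=0$, i.e.\ $\vec H$ is proportional to $n+\bn$ and the term dies by antisymmetry (the quantity $K(x_{z},n,x_{z},\bn)+\bH\Phi+H\Psi$ is invariant under rotations of the normal frame, so this choice costs nothing, and the case $\vec H\equiv 0$ is immediate). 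Your fallback of re-deriving \eqref{eq-1holo} from scratch would eventually surface this cancellation, but as written the proposal does not actually establish that the middle term vanishes, so the argument is incomplete at exactly the step you identified as the obstacle.
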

\begin{proof}
If the mean curvature vector field $\vec{H}=(H\bn+\bH n)/2$ is parallel, then
$$\bH_{\bar z}-\overline{\r} \bH=0,~~~~~~H_{\bar z}+\overline\r H=0,~~~~~~K(x_{ z},\bn,\vec{H},n+\bn)=0.$$
The conclusion now follows from \eqref{eq-1holo}.
\end{proof}
\begin{lemma}\label{lem-2diff}
Let $x: M^2\rightarrow (N^4,h)$ be a surface in locally symmetric space, if it is minimal, then the following globally defined differentials
$$K(x_{z},n,x_{z},\bn)dz^2,~~~~~~\Big(K(x_{z},\bn,x_{z},\bn)K(x_{z},n,x_{z},n)\Big)dz^4$$
are holomorphic. Moreover, $K(x_{z},\bn,x_{z},\bn)$ either vanishes identically or has only isolated zero point, so does  $K(x_{z},n,x_{z},n)$.
\end{lemma}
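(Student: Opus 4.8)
The plan is to feed the two standing hypotheses straight into the transport identities \eqref{eq-1holo}, \eqref{eq-2holo}, \eqref{eq-3holo}. Being locally symmetric means $\nabla K\equiv 0$, so every $(\nabla_{\xbz}K)(\cdots)$ term on their right-hand sides disappears; being minimal means $\vec H=0$, i.e.\ $H\equiv\bH\equiv 0$ (and hence all their derivatives vanish as well). Before using this I would record why the three expressions are coefficients of honest tensors: since $\xz$ transforms holomorphically under a change of complex coordinate, each of $K(\xz,n,\xz,\bn)$, $K(\xz,\bn,\xz,\bn)$, $K(\xz,n,\xz,n)$ transforms as the coefficient of a quadratic differential; and under the residual unitary gauge $n\mapsto e^{i\vartheta}n$ the first one is invariant while the other two are multiplied by $e^{-2i\vartheta}$ and $e^{2i\vartheta}$, respectively. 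Hence $K(\xz,n,\xz,\bn)\,dz^{2}$ is a globally well-defined quadratic differential, and the product $K(\xz,\bn,\xz,\bn)K(\xz,n,\xz,n)\,dz^{4}$ a globally well-defined quartic differential.

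For holomorphy I would first look at \eqref{eq-1holo}: with $\nabla K\equiv 0$ the Weyl-type term $K(\xz,\bn,\vec H,n+\bn)$ vanishes because $\vec H=0$, the $\Phi$- and $\Psi$-terms vanish because $H\equiv\bH\equiv 0$, and the correction $\bH\Phi+H\Psi$ on the left is zero as well; therefore $\p_{\bar z}K(\xz,n,\xz,\bn)=0$, so that quadratic differential is holomorphic. Next, \eqref{eq-2holo} and \eqref{eq-3holo} collapse to
$$\p_{\bar z}K(\xz,\bn,\xz,\bn)=2\br\,K(\xz,\bn,\xz,\bn),\qquad \p_{\bar z}K(\xz,n,\xz,n)=-2\br\,K(\xz,n,\xz,n).$$
Writing $A$ and $B$ for these two coefficients, the Leibniz rule gives $\p_{\bar z}(AB)=(2\br-2\br)AB=0$, so the quartic differential $AB\,dz^{4}$ is holomorphic.

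For the last assertion I would invoke the similarity principle applied to the first of the displayed equations: on a coordinate disc solve $\p_{\bar z}g=2\br$ (always possible locally), so that $e^{-g}A$ is holomorphic; since $e^{g}$ is nowhere zero, the zero set of $A$ coincides locally with the zero set of a holomorphic function, hence is either the whole disc or discrete. Connectedness of $M^{2}$ together with the identity theorem then upgrades this to the global dichotomy: $K(\xz,\bn,\xz,\bn)$ either vanishes identically or has only isolated zeros, and solving $\p_{\bar z}g=-2\br$ instead handles $K(\xz,n,\xz,n)$ in the same way. I do not expect a genuine obstacle inside the lemma itself --- the formulas \eqref{eq-1holo}--\eqref{eq-3holo} already do the heavy lifting --- so the only step demanding care is to phrase this last argument in terms of the gauge- and coordinate-independent zero \emph{set} rather than the locally defined function $A$.
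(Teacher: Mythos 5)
Your proposal is correct and follows essentially the same route as the paper: holomorphy of $K(x_z,n,x_z,\bar n)\,dz^2$ comes from \eqref{eq-1holo} (the paper routes this through Lemma~\ref{lem-1diff}, which is the same computation since minimality gives a parallel mean curvature vector), and holomorphy of the quartic differential comes from the cancellation of the $\pm 2\bar{\rho}$ terms in \eqref{eq-2holo} and \eqref{eq-3holo} after the locally symmetric and minimal hypotheses kill the remaining terms. The only difference is the final step: where the paper cites Chern's theorem on elliptic differential systems, you spell out its elementary special case --- since $\partial_{\bar z}A=2\bar{\rho}A$ has no $\bar{A}$-term, an integrating factor $e^{-g}$ with $\partial_{\bar z}g=2\bar{\rho}$ makes $e^{-g}A$ holomorphic, so the zero set is locally that of a holomorphic function --- which is a valid, self-contained substitute.
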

\begin{proof}
The holomorphicity of the first differential follows from Lemma~\ref{lem-1diff}. 

Using \eqref{eq-2holo} and \eqref{eq-3holo}, we can obtain
$$\frac{\partial}{\partial {\bar z}}\Big(K(x_{z},\bn,x_{z},\bn)K(x_{z},n,x_{z},n)\Big)=0,$$
under the assumption that $H=0$. 

The last conclusion follows from Chern's famous theorem on elliptic differential systems given in \cite{Chern}.
\end{proof}
\begin{remark}
It follows from \eqref{eq-Eul-H} that, minimal surfaces in locally symmetric space is Willmore if and only if
\begin{equation}\label{eq-mini-Will}
K(x_{\bar z},\bn, x_{\bar z},n)\Psi+
K(x_{z},n,x_{z},\bn)\bPh+K(x_{z},\bn,x_{z},\bn)\bPs+K(x_{\bar z},\bn,x_{\bar z},\bn)\Phi=0.
\end{equation}
\end{remark}
Next, we apply these holomorphic differentials to minimal $2$-spheres in locally symmetric spaces of dimension $4$. 
\begin{proposition}
Let $x: \mathbb{S}^2\rightarrow (N^4,h)$ be a minimal $2$-sphere in locally symmetric space. If $(N^4, h)$ is furthrt conformally flat, then $x$ is Willmore.
\end{proposition}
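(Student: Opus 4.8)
The plan is to use conformal flatness to strip the ambient curvature down to its Ricci part, and then to invoke the holomorphicity statements of Lemma~\ref{lem-2diff} together with the nonexistence of nonzero holomorphic quadratic differentials on $\mathbb{S}^2$.

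First I would compute the curvature quantities entering the Willmore equation under the extra hypothesis that the Weyl tensor vanishes. Since $(N^4,h)$ is conformally flat, \eqref{eq-Weyl} expresses $K$ purely through $\mathrm{Ric}$ and $R$; feeding the complex frame vectors into that identity and using $h(e,e)=h(n,n)=h(e,n)=h(e,\bn)=0$, $h(n,\bn)=2$ together with the corresponding relations for conjugates, one obtains along $x$
\[ K(\be,n,\be,\bn)=\mathrm{Ric}(\be,\be),\qquad K(e,\bn,e,n)=\mathrm{Ric}(e,e),\qquad K(\be,\bn,\be,\bn)=K(e,\bn,e,\bn)=0 . \]
With the coordinate normalization $\xbz=\tfrac{1}{2}e^{\s}e$ and $\xz=\tfrac{1}{2}e^{\s}\be$, these become
\[ K(\xz,n,\xz,\bn)=\tfrac{1}{4}e^{2\s}\,\mathrm{Ric}(\be,\be),\quad K(\xbz,\bn,\xbz,n)=\tfrac{1}{4}e^{2\s}\,\mathrm{Ric}(e,e),\quad K(\xz,\bn,\xz,\bn)=K(\xbz,\bn,\xbz,\bn)=0 . \]

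Next I would use minimality and local symmetry: by Lemma~\ref{lem-2diff}, $K(\xz,n,\xz,\bn)\,dz^2$ is a globally defined holomorphic quadratic differential on $M^2=\mathbb{S}^2$, and since $\mathbb{S}^2$ admits no nonzero holomorphic quadratic differential it vanishes identically. Combined with the first identity above this forces $\mathrm{Ric}(\be,\be)\equiv 0$ along $x$, and hence also $\mathrm{Ric}(e,e)=\overline{\mathrm{Ric}(\be,\be)}\equiv 0$.

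Finally, recall that a minimal surface in a locally symmetric space is Willmore exactly when \eqref{eq-mini-Will} holds (equivalently, \eqref{eq-Eul-H} with $H=0$). Substituting the computed coefficients, the two terms carrying $K(\xbz,\bn,\xbz,n)$ and $K(\xz,n,\xz,\bn)$ are multiples of $\mathrm{Ric}(e,e)$ and $\mathrm{Ric}(\be,\be)$ and so vanish on $\mathbb{S}^2$, while the remaining two terms vanish already by conformal flatness; hence \eqref{eq-mini-Will} holds identically and $x$ is Willmore. The main obstacle is really just the curvature bookkeeping of the first step — in particular, recognizing that the holomorphic quadratic differential of Lemma~\ref{lem-2diff} is, up to a positive factor, $\mathrm{Ric}(\be,\be)\,dz^2$, which is the mechanism through which the $2$-sphere hypothesis is actually used; once this is in place the conclusion follows by pure substitution.
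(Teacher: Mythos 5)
Your proof is correct and follows essentially the same route as the paper: the holomorphic quadratic differential of Lemma~\ref{lem-2diff} vanishes on $\mathbb{S}^2$, forcing $K(x_z,n,x_z,\bar n)=0$, while conformal flatness kills $K(x_z,\bar n,x_z,\bar n)$ and $K(x_{\bar z},\bar n,x_{\bar z},\bar n)$ because these equal the corresponding Weyl components, so \eqref{eq-mini-Will} holds. Your additional identification of the holomorphic differential with $\tfrac14 e^{2\sigma}\mathrm{Ric}(\bar e,\bar e)\,dz^2$ is correct but not needed for the argument.
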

\begin{proof}
Since holomorphic differentials on the Riemannian sphere always vanishes. It follows from Lemma~\ref{lem-2diff} that $K(x_{z},n,x_{z},\bn)=0$. 

On the other hand, if $(N^4,h)$ is conformally flat, then its Weyl tensor vanishes.  The conclusion follows from  
$$K(x_{z},\bn,x_{z},\bn)=W(x_{z},\bn,x_{z},\bn)=0,~~~~~~K(x_{\bar z},\bn,x_{\bar z},\bn)=W(x_{\bar z},\bn,x_{\bar z},\bn)=0,$$
where \eqref{eq-Weyl} has been used. 
\end{proof}
Note that $3$-dim locally symmetric space is  automatically conformally flat. Therefore, we can obtain the following corollary. 
\begin{corollary}
Minimal $2$-spheres in $3$-dimensional locally symmetric spaces are Willmore.
\end{corollary}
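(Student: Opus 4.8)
The plan is to realize the $3$-dimensional ambient inside a locally symmetric $4$-manifold and then to read off the conclusion from the minimal-Willmore criterion \eqref{eq-mini-Will}. Following Remark~\ref{rk-dim3-inv}, I would embed $(N^3,h)$ as the totally geodesic slice $N^3\times\{0\}$ in the Riemannian product $N^4=N^3\times\mathbb{R}$, letting $e_4=\partial_t$ be the parallel, flat factor direction, so that $n=e_3+ie_4$ has parallel imaginary part. A product of a locally symmetric space with a flat line is again locally symmetric, so $N^4$ is locally symmetric; and since the slice is totally geodesic, the second fundamental form of $x$ has no $e_4$-component, whence the minimal $2$-sphere $x:\mathbb{S}^2\to N^3$ remains minimal when viewed in $N^4$. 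In this way the entire apparatus of Section~\ref{sec-con-inv} becomes available.

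With $x$ a minimal surface in the locally symmetric space $N^4$, Lemma~\ref{lem-2diff} tells us that $K(\xz,n,\xz,\bn)\,dz^2$ is a holomorphic quadratic differential; as $M^2=\mathbb{S}^2$ has genus $0$, it vanishes identically, so $K(\xz,n,\xz,\bn)\equiv 0$. Here I would use the product structure decisively: because $e_4$ is parallel and spans a flat factor, the ambient curvature $K$ vanishes whenever $e_4$ appears among its arguments, so every curvature contraction reduces to the curvature of $N^3$; in particular $K(\xz,n,\xz,\bn)=K(\xz,e_3,\xz,e_3)$. Since a $3$-manifold has vanishing Weyl tensor --- this is precisely the manifestation of the classical fact, cited just above, that a $3$-dimensional locally symmetric space is conformally flat, and it is the analogue here of the conformal-flatness hypothesis in the preceding Proposition --- the curvature decomposition underlying \eqref{eq-Weyl}, now in dimension three (where $W\equiv 0$), together with $h(\xz,\xz)=h(\xz,e_3)=0$ and $h(e_3,e_3)=1$, yields
$$K(\xz,e_3,\xz,e_3)=\mathrm{Ric}(\xz,\xz).$$
Thus the vanishing just obtained reads $\mathrm{Ric}(\xz,\xz)\equiv 0$ along $x$.

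It then remains to evaluate the four curvature coefficients in \eqref{eq-mini-Will}. Discarding every $e_4$-term as before, each coefficient collapses onto the single quantity $K(\xz,e_3,\xz,e_3)$ or its complex conjugate: one checks that $K(\xz,n,\xz,\bn)$ and $K(\xz,\bn,\xz,\bn)$ both equal $\mathrm{Ric}(\xz,\xz)$, while $K(\xbz,\bn,\xbz,n)$ and $K(\xbz,\bn,\xbz,\bn)$ both equal $\mathrm{Ric}(\xbz,\xbz)$. Since $\mathrm{Ric}(\xz,\xz)\equiv 0$ and hence also its conjugate $\mathrm{Ric}(\xbz,\xbz)\equiv 0$, all four terms of \eqref{eq-mini-Will} vanish, the criterion is satisfied, and $x$ is Willmore. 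I expect the main obstacle to be the bookkeeping of this last step --- verifying that the four a priori distinct contractions in \eqref{eq-mini-Will} really do collapse to the single Ricci quantity $\mathrm{Ric}(\xz,\xz)$ --- which rests on combining the flat, parallel product direction with the dimension-three Weyl vanishing; the genuine analytic input, namely that a holomorphic quadratic differential on $\mathbb{S}^2$ is zero, is by contrast immediate.
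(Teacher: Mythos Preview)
Your argument is correct and is essentially the paper's approach made explicit. The paper derives the corollary in one line from the preceding Proposition by noting that a $3$-dimensional locally symmetric space is automatically conformally flat; you unpack what this means after passing to the product $N^4=N^3\times\mathbb{R}$ via Remark~\ref{rk-dim3-inv}, namely that each of the four curvature coefficients in \eqref{eq-mini-Will} collapses (once all $e_4$-slots are dropped) to $K^{N^3}(\xz,e_3,\xz,e_3)=\mathrm{Ric}^{N^3}(\xz,\xz)$, which is precisely the quantity $K(\xz,n,\xz,\bn)$ already killed by Lemma~\ref{lem-2diff} on $\mathbb{S}^2$.

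One small remark: your write-up is in fact \emph{more} careful than a literal reading of the paper's one-liner, since the product $N^3\times\mathbb{R}$ is generally \emph{not} conformally flat as a $4$-manifold (e.g.\ $S^2\times\mathbb{R}^2$ has nonzero Weyl tensor), so the Proposition does not apply verbatim. What saves the day---and what you correctly isolate---is that in the product the four a~priori distinct terms $K(\xz,n,\xz,\bn)$, $K(\xz,\bn,\xz,\bn)$, $K(\xbz,\bn,\xbz,n)$, $K(\xbz,\bn,\xbz,\bn)$ all reduce to the \emph{same} $3$-dimensional curvature $K^{N^3}(\xz,e_3,\xz,e_3)$ (or its conjugate), so the single vanishing supplied by the holomorphic quadratic differential suffices. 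The paper's invocation of ``conformal flatness'' should be read as shorthand for exactly this collapse, which your bookkeeping makes rigorous.
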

\begin{theorem}
Let $x: \mathbb{S}^2\rightarrow (N^4,h)$ be a minimal sphere in locally symmetric space. If it is Willmore, then it must be super-minimal.
\end{theorem}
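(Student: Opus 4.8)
The plan is to combine the two vanishing results already available for minimal $2$-spheres in locally symmetric $4$-manifolds with the Willmore equation for minimal surfaces, equation~\eqref{eq-mini-Will}. First I would invoke Lemma~\ref{lem-2diff}: since $\mathbb S^2$ carries no nonzero holomorphic differentials, the holomorphic quadratic differential $K(x_z,n,x_z,\bn)\,dz^2$ vanishes identically, so $K(x_z,n,x_z,\bn)\equiv 0$ on the sphere; moreover the holomorphic quartic differential $K(x_z,\bn,x_z,\bn)K(x_z,n,x_z,n)\,dz^4$ also vanishes identically, so at each point at least one of $K(x_z,\bn,x_z,\bn)$ and $K(x_z,n,x_z,n)$ is zero, and by the last sentence of Lemma~\ref{lem-2diff} each of these two functions is either identically zero or has only isolated zeros. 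Consequently at least one of $K(x_z,\bn,x_z,\bn)$, $K(x_z,n,x_z,n)$ vanishes identically on $\mathbb S^2$.

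With $K(x_z,n,x_z,\bn)\equiv 0$, the Willmore equation \eqref{eq-mini-Will} collapses to
\begin{equation*}
K(x_{\bar z},\bn,x_{\bar z},n)\,\Psi + K(x_z,\bn,x_z,\bn)\,\bPs + K(x_{\bar z},\bn,x_{\bar z},\bn)\,\Phi = 0,
\end{equation*}
and noting $K(x_{\bar z},\bn,x_{\bar z},n)=\overline{K(x_z,n,x_z,\bn)}=0$ as well, this reduces further to
\begin{equation*}
K(x_z,\bn,x_z,\bn)\,\bPs + K(x_{\bar z},\bn,x_{\bar z},\bn)\,\Phi = 0,
\end{equation*}
i.e.\ $K(x_z,\bn,x_z,\bn)\,\bPs = -\,\overline{K(x_z,\bn,x_z,\bn)\,\bPs}$, so that $\mathfrak{Re}\big(K(x_z,\bn,x_z,\bn)\,\bPs\big)=0$. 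The idea is then to upgrade this real-part condition to the full vanishing $K(x_z,\bn,x_z,\bn)\,\psi\equiv 0$ by exploiting holomorphicity: $K(x_z,\bn,x_z,\bn)\,dz^2$ is, by Lemma~\ref{lem-1diff} applied with $\vec H=0$ (or directly from \eqref{eq-2holo} using $\br K(x_z,\bn,x_z,\bn)$ being absorbed by a conformal rescaling of the frame, or by passing to the conformally invariant product with $\Psi$), paired against the anti-holomorphic-type object $\bps\,\bq^2$; the product $K(x_z,\bn,x_z,\bn)\bar\Psi\,dz^2 d\bar z^2$ is a globally defined form whose $\bar\partial$-derivative I would compute from \eqref{eq-2holo} and the covariant derivative formula for $\Psi$, showing it satisfies a Chern-type elliptic inequality $|\bar\partial(\cdot)|\le C|\cdot|$, hence vanishes identically on $\mathbb S^2$ unless its zero set is isolated — and being real-part-free forces it to vanish. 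In the end we obtain $K(x_z,\bn,x_z,\bn)\,\psi\equiv 0$, and since $K(x_z,\bn,x_z,\bn)$ either vanishes identically or has isolated zeros, we conclude either $\psi\equiv 0$ (super-minimal of positive spin) or $K(x_z,\bn,x_z,\bn)\equiv 0$. In the latter case we still must produce $\phi\equiv0$ or $\psi\equiv0$; here one runs the symmetric argument, using that when $K(x_z,\bn,x_z,\bn)\equiv 0$ it is $K(x_z,n,x_z,n)$ that may be nonvanishing, the quartic differential condition is automatic, and the Willmore equation's surviving term $K(x_{\bar z},\bn,x_{\bar z},n)\,\Psi$ together with $K(x_z,n,x_z,n)$-holomorphicity pins down $\phi\equiv 0$ (negative spin).

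The main obstacle I anticipate is the passage from the real-part identity $\mathfrak{Re}(K(x_z,\bn,x_z,\bn)\bps)=0$ to the honest vanishing $K(x_z,\bn,x_z,\bn)\psi\equiv0$: a pointwise real-part condition on a single complex quantity is genuinely weaker, and the argument must use in an essential way that the quantity in question (or a conformally invariant multiple of it) satisfies a $\bar\partial$-type elliptic equation with the right weight, so that Chern's theorem on elliptic systems \cite{Chern} applies and the zero set is either everything or isolated — only then does the real-part condition, which cuts out a codimension-one set, force identical vanishing. Carefully identifying the correct globally defined, conformally invariant combination (analogous to $\Phi\Psi\,dz^4$ and to the invariants in Remark~\ref{rk-con-inv}) on which to run this, and checking its $\bar\partial$-equation using the locally symmetric hypothesis $\nabla K = 0$ and \eqref{eq-2holo}–\eqref{eq-8holo}, is where the real work lies; the rest is bookkeeping with the Gauss–Bonnet/index count of Theorem~\ref{thm-index} to confirm consistency of the two spin cases.
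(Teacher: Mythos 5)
There is a genuine gap, and it comes in two parts. First, an algebraic error: after you kill the first two terms of \eqref{eq-mini-Will}, the surviving equation is $K(x_z,\bn,x_z,\bn)\,\bPs+K(x_{\bar z},\bn,x_{\bar z},\bn)\,\Phi=0$, but the second summand is \emph{not} the conjugate of the first. Indeed $\overline{K(x_z,\bn,x_z,\bn)\,\bPs}=K(x_{\bar z},n,x_{\bar z},n)\,\Psi$, whereas $K(x_{\bar z},\bn,x_{\bar z},\bn)=\overline{K(x_z,n,x_z,n)}$ multiplies $\Phi$. So the Willmore condition does not collapse to $\mathfrak{Re}\big(K(x_z,\bn,x_z,\bn)\bPs\big)=0$, and the entire ``upgrade from a real-part identity'' programme you build on top of this is aimed at a problem that does not arise. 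In the case you are implicitly treating --- exactly one of $K(x_z,\bn,x_z,\bn)$, $K(x_z,n,x_z,n)$ vanishes identically while the other (by Lemma~\ref{lem-2diff} and Chern's theorem) has only isolated zeros --- the surviving equation already reads $K(x_{\bar z},\bn,x_{\bar z},\bn)\Phi=0$ (or $K(x_z,\bn,x_z,\bn)\bPs=0$), which forces $\Phi\equiv 0$ (resp.\ $\Psi\equiv 0$) on a dense set and hence everywhere. No elliptic-inequality upgrade is needed; this case is immediate.

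Second, and more seriously, you never address the case in which \emph{both} $K(x_z,\bn,x_z,\bn)$ and $K(x_z,n,x_z,n)$ vanish identically. This case is not exotic --- it occurs whenever the ambient space is conformally flat (e.g.\ $\mathbb S^4$), since these quantities equal Weyl components by \eqref{eq-Weyl} --- and in it the Willmore equation \eqref{eq-mini-Will} is satisfied identically by every minimal surface, so it carries no information whatsoever. This is where essentially all of the work in the actual proof lies: one must bring in the further identities \eqref{eq-4holo}--\eqref{eq-8holo} (using $\nabla K=0$ and $H=0$) to show that $\Phi$, $\Psi$, $K(x_z,\bn,\xbz,\xz)$, $K(x_z,n,\xz,\xbz)$ each vanish identically or have isolated zeros, derive the pointwise relation $\Psi K(\xz,\xbz,\xz,n)-\Phi K(\xz,\xbz,\xz,\bn)=0$ on the dense set where $\Phi\Psi\neq 0$, use the vanishing of the holomorphic quadratic differential $e^{-4\s}K(x_z,\bn,\xbz,\xz)K(x_z,n,\xz,\xbz)\,dz^2$ on $\mathbb S^2$ to conclude both curvature factors vanish, and only then obtain from Codazzi \eqref{eq-Coda-complex} that $\Phi\Psi\,dz^4$ is holomorphic, hence zero, contradicting non-super-minimality. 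Your proposal contains none of these steps, so the theorem is not proved in the case where the Willmore hypothesis is vacuous.
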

\begin{proof}
It follows from Lemma~\ref{lem-2diff} that
$$K(x_{z},n,x_{z},\bn)=0,~~~~~~K(x_{z},\bn,x_{z},\bn)K(x_{z},n,x_{z},n)=0,$$
and then either $K(x_{z},\bn,x_{z},\bn)$ or $K(x_{z},n,x_{z},n)$ vanishes identically.

Firstly, we assume they don't vanish identically at the same time, for example, $K(x_{z},\bn,x_{z},\bn)=0$ everywhere but $K(x_{z},n,x_{z},n)$ has non-zero points. If $x$ is Willmore, then from \eqref{eq-mini-Will}, we have
$$K(x_{\bar z},\bn,x_{\bar z},\bn)\Phi=0.$$
So whenever $K(x_{z},n,x_{z},n)\neq 0$, we have $\Phi=0$. It follows from Lemma~\ref{lem-2diff} that the non-zero points of $K(x_{z},n,x_{z},n)$ constitute an open dense subset of $\mathbb{S}^2$, on which we derive that $\Phi=0$. Therefore $\Phi$ vanishes identically on $\mathbb{S}^2$, which implies $x$ is super-minimal.

If both $K(x_{z},\bn,x_{z},\bn)$ and $K(x_{z},n,x_{z},n)$ vanish identically. Then using Chern's famous theorem on elliptic differential system again, from \eqref{eq-Coda-complex}, \eqref{eq-4holo} and \eqref{eq-5holo}, we can obtain that 
$$\Phi,~~~\Psi,~~~K(x_{z},\bn,\xbz,\xz),~~~K(x_{z},n,\xz,\xbz)$$
either vanish identically or equal zero only at isolated points. 
Set 
$$\Omega\triangleq \{p\in \mathbb{S}^2\,\big|\, \Phi(p)\neq0, \Psi(p)\neq0\}.$$
Assume $x$ is not super-minimal, then $\Omega$ is open and dense in $\mathbb{S}^2$. It follows from \eqref{eq-6holo}  $\sim$  \eqref{eq-8holo} that
$$\Big(K(\xz,n,n,\bn)+4e^{-2\s}K(\xz,\xbz,\xz,n)\Big)\Phi=0, 
$$
$$\Big(K(\xz,\bn,n,\bn)-4e^{-2\s}K(\xz,\xbz,\xz,\bn)\Big)\Psi=0, 
$$
$$\frac{\Psi}{2}\Big(K(\xz,n,n,\bn)-4e^{-2\s}K(\xz,\xbz,\xz,n)\Big)
-\frac{\Phi}{2}\Big(K(\xz,\bn,n,\bn)+4e^{-2\s}K(\xz,\xbz,\xz,\bn)\Big)=0.
$$
So on $\Omega$ and hence then $\mathbb{S}^2$, we have
\beq\label{eq-psKphK}
\Psi K(\xz,\xbz,\xz,n)
-\Phi K(\xz,\xbz,\xz,\bn)=0.
\eeq
On the other hand, it follows from \eqref{eq-4holo} and \eqref{eq-5holo} that
$$\frac{\partial}{\partial {\bar z}}\Big(e^{-4\s}K(x_{z},\bn,\xbz,\xz)K(x_{z},n,\xz,\xbz)\Big)=0,$$
which implies $e^{-4\s}K(x_{z},\bn,\xbz,\xz)K(x_{z},n,\xz,\xbz)dz^2$ is a globally defined holomorphic differential.  On the minimal $2$-sphere $x(\mathbb{S}^2)$, this differential vanishes. Combining with \eqref{eq-psKphK}, we obtain that both $K(x_{z},\bn,\xbz,\xz)$ and $K(x_{z},n,\xz,\xbz)$ vanish on $\Omega$ and hence then the whole sphere $\mathbb{S}^2$.
Now it follows from the Codazzi equation \eqref{eq-Coda-complex} that
$$\frac{\partial}{\partial { \bar z}}(\Phi\Psi)=0,$$
which implies $\Phi\Psi dz^4$ is a globally defined holomorphic differential on $\mathbb{S}^2$, so it must vanish identically. This implies $\Omega$ is empty, which is contradicted with our assumption.

\end{proof}

\section{The second variation of the Willmore functional}\label{2nd-variation}
In this section, we calculate the second variation of the Willmore functional for closed surfaces in $4$-dimensional conformal manifold. 

Let $x: M^2\to (N^4,[h])$ be a closed surface. We continue to use the notations given in Section~\ref{sec-con-inv} and Section~\ref{1st-variation}. It follows from Section~\ref{1st-variation} that the Willmore functional only depends on the normal variation. Therefore in the following, we assume the function $\epsilon$ appearing in \eqref{eq-tangentvari} equals zero identically, i.e., the variation vector field $V=dx(\frac{\p}{\p t})$ is given by
$$V=\frac{\bv n+v\bn}{2}.$$

From the structure equation of $(N^4, h)$ and \eqref{eq-pII1t}  $\sim$  
\eqref{eq-v1A1}, we can obtain the variation of $e$ and $n$ as follows:  
\beq\begin{split}
&\frac{\p e}{\p t}=\nabla_V e=
\vb1\,\bn+\bv_{\bar1}\,n-iB_1e,\\
&\frac{\p n}{\p t}=\nabla_V n=
-\vb1\,\be-v_1\,e-iB_2n,
\end{split}
\eeq
where $v_1$ and $v_{\bar 1}$ are the covariant derivative of $v$ as defined in \eqref{eq-cov}. 
By direct calculation, we have
\begin{equation}\label{eq-vKenebn}
\begin{split}
\frac{\p}{\p t} \mathrm{Ric}(e, e)=\frac{\p}{\p t} K(e, n, e,\bn)=&(\nabla_V K)(e,n,e,\bn)
+\vb1K(e,\bn,\bn,n)-\vb1K(e,\be,e,\bn)\\
&+\bv_{\bar1}K(e,n,n,\bn)-\bv_{\bar1}K(e,\be,e,n)-2i\,B_1\,\mathrm{Ric}(e, e),
\end{split}
\end{equation}
\beq
\begin{split}
\frac{\p}{\p t} K(\be,\bn,\be,\bn)=(\nabla_VK)(\be,\bn,\be,\bn)
+2\bv_1K(\be,\bn,n,\bn)-2\bv_1K(\be,e,\be,\bn)+2i(B_1+B_2)K(\be,\bn,\be,\bn), 
\end{split}
\eeq
\beq
\begin{split}
\frac{\p}{\p t} K(e,\bn,e,\bn)=(\nabla_VK)(e,\bn,e,\bn)
+2\bv_{\bar1}K(e,\bn,n,\bn)+2\bv_{\bar1}K(\be,e,e,\bn)-2i(B_1-B_2)K(e,\bn,e,\bn).
\end{split}
\eeq
We also need the following variation formulas of $\q_{12},\q_{34}, H, \phi, \psi$, which can be derived from the differential of \eqref{eq-xo12}, \eqref{eq-xo34} and  \eqref{eq-pqt}  $\sim$  \eqref{eq-pII2t}.
\vskip -0.3cm
\beq\label{eq-vthe12}
\begin{split}
\frac{\p \q_{12}}{\p t}=&d_{m}B_1+\frac{1}{2}\Big(i\bv_1(H\q+2\bps\,\bq)-i\vb1(\bH\,\bq+2\psi\q)-i\bv_{\bar1}(H\bq+2\phi\q)+i v_1(\bH\q+2\bph\,\bq)\Big)\\
&+\frac{1}{8i}\Big(K(\be,e,e,n)\bv\bq+K(\be,e,\be,n)\bv\q+K(\be,e,\be,\bn)v\q+K(\be,e,e,\bn)v\bq\Big),
\end{split}
\eeq
\vskip -0.5cm
\beq\begin{split}
\frac{\p \q_{34}}{\p t}=&d_{m}B_2+\frac{1}{2}\Big(i\bv_1(H\q+2\bps\,\bq)-i\vb1(\bH\,\bq+2\psi\q)+i\bv_{\bar1}(H\bq+2\phi\q)-iv_1(\bH\q+2\bph\,\bq)\Big)\\
&+\frac{1}{8i}\Big(K(\bn,n,e,n)\bv\bq+K(\bn,n,\be,n)\bv\q+K(\bn,n,\be,\bn)v\q+K(\bn,n,e,\bn)v\bq\Big),
\end{split}
\eeq
\vskip -0.3cm
\beq
\begin{split}
~~~~~~~~\frac{\p H}{\p t}=&2\vba11-iB_2 H+\Big(\frac{1}{4}K(e,n,\be,n)+\frac{H^2}{2}+2\phi\bps\Big)\bv+\Big(\frac{1}{4}K(e,n,\be,\bn)+\frac{|H|^2}{2}+2|\psi|^2\Big)v,\\
=&2\v1b1-iB_2 H+\Big(\frac{1}{4}K(e,n,\be,n)+\frac{H^2}{2}+2\phi\bps\Big)\bv+\Big(\frac{1}{4}K(\be,n,e,\bn)+\frac{|H|^2}{2}+2|\phi|^2\Big)v,
\end{split}
\eeq
\vskip -0.3cm
\beq
~~~~~~~\frac{\p \bps}{\p t}=v_{\bar1\bar1}-i(2B_1+B_2)\bps+\Big(\frac{1}{8}K(e,n,e,n)+H\bps\Big)\bv+\Big(\frac{1}{8}\mathrm{Ric}(e, e)+\frac{H\bph}{2}+\frac{\bH\,\bps}{2}\Big)v,
\eeq
\vskip -0.5cm
\begin{equation}
\label{eq-vphi}
~~~~~~\frac{\p \phi}{\p t}=v_{11}+i(2B_1-B_2)\phi+\Big(\frac{1}{8}K(\be,n,\be,n)+H\phi\Big)\bv+\Big(\frac{1}{8}K(\be,n,\be,\bn)+\frac{H\psi}{2}+\frac{\bH\phi}{2}\Big)v,
\end{equation}
where $v_{11}, v_{1\bar1}, v_{\bar11}, v_{\bar1\bar1}$ are the covariant derivatives of $v_1$ and $v_{\bar1}$ as defined in \eqref{eq-conv2}. 

Now we are ready to calculate the second variation of the Willmore functional. Let $x: M^2\rightarrow N^4$ be a closed Willmore surface. It follows from \eqref{eq-1stvar} that
\beq\label{eq-2nd}
\begin{split}
\frac{\p^2}{\p t^2}\Big{|}_{t=0}\mathcal{W}(x)=4\,\mathfrak{Re}\int_{M^2}\Big\{&\frac{v}{4}\frac{\p }{\p t}\big(\psi \mathrm{Ric}(e, e)+\bph \mathrm{Ric}(\be, \be)+\bps K(\be,\bn,\be,\bn)+\phi K(e,\bn,e,\bn)\big)\\
&+v \frac{\p}{\p t}\big(\bH(|\phi|^2+|\psi|^2)+2H\bph\psi\big)+2v\big(\frac{\p \psi_{\bar1\bar1}}{\p t}+\frac{\p \bph_{11}}{\p t}\big)\Big\}dA_x.
\end{split}
\eeq

Using \eqref{eq-vKenebn}  $\sim$  \eqref{eq-vphi}, one can verify directly that 
\begin{equation}\label{eq-2vcurv}
\begin{split}
&\frac{\p }{\p t}\Big(\psi \mathrm{Ric}(e, e)+\bph \mathrm{Ric}(\be, \be)+\bps K(\be,\bn,\be,\bn)+\phi K(e,\bn,e,\bn)\Big)
\\
=&\psi \nabla_V \mathrm{Ric}(e, e)+\bph\nabla_V K (\be,\bn,\be,n)+\bps \nabla_V K(\be,\bn,\be,\bn)+\phi \nabla_V K(e,\bn,e,\bn)\\
&+ \bar{v}_{11} K\left(e,n,e,\bar{n}\right)+v_{11}  K\left(e,\bar{n},e,\bar{n}\right)+ \bar{v}_{\bar{1} \bar{1}}   K\left(\bar{e},\bar{n},\bar{e}, n\right)+
v_{\bar{1} \bar{1}}   K\left(\bar{e},\bar{n},\bar{e}, \bar{n}\right)\\
&+\bar{v}_1 \Big(2 \bar{\psi } \big(K\left(e,\bar{e},\bar{e},\bar{n}\right)+K\left(n,\bar{n},\bar{e},\bar{n}\right)\big)+\bar{\phi } \big(K\left(e,\bar{e},\bar{e},n\right)+ K\left(n,\bar{n},\bar{e},n\right)\big)\Big)\\
&+ \bar{v}_{\bar{1}} \Big(\psi  \big(K\left(n,\bar{n},e,n\right)-K\left(e,\bar{e},e,n\right)\big)-2 \phi \big( K\left(e,\bar{e},e,\bar{n}\right)-  K\left(n,\bar{n},e,\bar{n}\right)\big)\Big)\\
&+ v_1 \bar{\phi } \big(K\left(e,\bar{e},\bar{e},\bar{n}\right)-K\left(n,\bar{n},\bar{e},\bar{n}\right)\big)-v_{\bar{1}} \psi   \big(K\left(e,\bar{e},e,\bar{n}\right)+K\left(n,\bar{n},e,\bar{n}\right)\big)\\
&+\frac{1}{8} \bv 
\Big(8 H\big( \bar{\psi }   K\left(\bar{e},\bar{n},\bar{e}, \bar{n}\right)+\phi K\left(e,\bar{n},e,\bar{n}\right)\big)
+4 \big(\bar{H} \bar{\psi }+H \bar{\phi }\big)   K\left(\bar{e},\bar{n},\bar{e}, n\right)\\
&~~~~~~~~~~~~~+4\big(H \psi+\bar{H}\phi\big)
K\left(e,n,e,\bar{n}\right)
+2|K\left(e,n,e,\bar{n}\right)|^2+|K\left(e,\bar{n},e,\bar{n}\right)|^2+|K(e,n,e,n)|^2\Big)\\
&+\frac{1}{4} v
\Big(4   \bar{H}\big(\psi K\left(e,n,e,\bar{n}\right)+\bar{\phi }   K\left(\bar{e},\bar{n},\bar{e}, n\right)\big)+2 \left(\bar{H} \bar{\psi }+H \bar{\phi }\right)   K\left(\bar{e},\bar{n},\bar{e}, \bar{n}\right)\\
&~~~~~~~~~~~+ 2\left(
H \psi+\phi\bar{H}\right)K\left(e,\bar{n},e,\bar{n}\right)+K\left(e,n,e,\bar{n}\right)K\left(\bar{e},\bar{n},\bar{e}, \bar{n}\right)+K\left(e,\bar{n},e,\bar{n}\right)  K\left(\bar{e},\bar{n},\bar{e}, n\right)\Big)\\
&+i B_2  \Big(\psi  K\left(e,n,e,\bar{n}\right)+\bar{\psi }   K\left(\bar{e},\bar{n},\bar{e}, \bar{n}\right)+\phi  K\left(e,\bar{n},e,\bar{n}\right)+\bar{\phi
}   K\left(\bar{e},\bar{n},\bar{e}, n\right)\Big),
\end{split}
\end{equation}
and 
\allowdisplaybreaks
\begin{align}
&\frac{\p}{\p t}\Big(\bH(|\phi|^2+|\psi|^2)+2H\bph\psi\Big)\nonumber \\
= &
\left(\bar{v}_{\bar{1} 1}+\bar{v}_{1\bar{1}}\right) \big(\left| \psi \right| ^2+\left| \phi \right| ^2\big)+2 \left(v_{\bar{1} 1}+ v_{1\bar{1}}\right) \psi   \bar{\phi }
+\bar{v}_{\bar{1}
\bar{1}} \left(\bar{H}\phi+2 H \psi \right) \nonumber \\
&+\bar{v}_{11} \left(\bar{H} \bar{\psi }+2 H \bar{\phi }\right)+v_{\bar{1} \bar{1}} \bar{H} \psi  +v_{11} \bar{H} \bar{\phi } \nonumber \\
&+\frac{1}{8}\bv 
\Big(
K\left(e,n,e,\bar{n}\right)\big(\bar{H}\phi+2 H \psi \big)+K\left(\bar{e},\bar{n},\bar{e}, n\right)\left(\bar{H} \bar{\psi }+2 H \bar{\phi }\right)
+\bar{H}\psi K\left(e,n,e,n\right) \nonumber 
\\
&~~~~~~~~~~~~~+
\bar{H} \bar{\phi } K\left(\bar{e},n,\bar{e},n\right)+24 H^2 \psi  \bar{\phi }
+8 \bar{H}^2 \phi \bar{\psi }+24 \left| H\right| ^2 \left(\left| \psi \right|
^2+\left| \phi \right| ^2\right)+8 \big(\left| \psi \right| ^4+\left| \phi \right| ^4\big) \nonumber 
\\
&~~~~~~~~~~~~~+48 \left| \phi \right| ^2\left| \psi \right| ^2+\big(\left| \psi \right| ^2+\left|
\phi \right| ^2 \big)\big(K\left(e,n,\bar{e},\bar{n}\right)+K\left(e,\bar{n},\bar{e},n\right)\big)
 +4
\psi  \bar{\phi } K\left(e,n,\bar{e},n\right)
\Big) \label{eq-2vmean}
\\
&+\frac{1}{8} v \Big(
  K\left(e,\bar{n},e,\bar{n}\right)\big(  \bar{H}\phi+2 H \psi \big)+ K\left(\bar{e},\bar{n},\bar{e},\bar{n}\right)\big(\bar{H} \bar{\psi }+2
H \bar{\phi }\big)+
 \bar{H}\psi K\left(e,n,e,\bar{n}\right) \nonumber
\\
&~~~~~~~~~~~+\bar{H} \bar{\phi } K\left(\bar{e},\bar{n},\bar{e}, n\right)+16 \bar{H}^2 \big(\left| \phi \right| ^2+\left| \psi \right| ^2\big)+48 \left| H\right| ^2\psi  \bar{\phi }+
2 \big(\left| \phi \right| ^2+\left| \psi \right| ^2\big) K\left(e,\bar{n},\bar{e},\bar{n}\right) \nonumber
 \\
&~~~~~~~~~~~+2 \psi  \bar{\phi } \big(K\left(e,n,\bar{e},\bar{n}\right)+
K\left(e,\bar{n},\bar{e},n\right)\big)+32 \psi  \bar{\phi }\big(
|\bar{\phi }|^2+|\bar{\psi }|^2\big)\Big) \nonumber \\
&+i B_2 \Big(\bar{H} \left| \psi \right| ^2+\bar{H} \left| \phi \right| ^2+2 H \psi  \bar{\phi }\Big). \nonumber
\end{align}

\def\psib1{\psi_{\bar1}}
\def\psbb1{\psi_{\bar1\bar1}}
\def\psb11{\psi_{\bar11}}
\begin{lemma}
Modulo some globally defined exact $2$-forms, we have 
\begin{equation}\label{eq-itm1n}
\begin{split}
v\frac{\p \psi_{\bar1\bar1}}{\p t}\bq\wedge\q\equiv &(v\psi_{\bar1\bar1}-v_{\bar1}\psi_{\bar1})\frac{\p }{\p t}(\q\wedge\bq)+v_{\bar1\bar1}\frac{\p \psi}{\p t}\bq\wedge\q+(vD\psi_{\bar1}+\psi Dv_{\bar1})\wedge\frac{\p \q}{\p t}\\
&+i(v_{\bar1}\psi-v\psi_{\bar1})(\frac{\p \q_{12}}{\p t}+\frac{\p \q_{34}}{\p t})\wedge\q, 
\end{split}
\end{equation}
\begin{equation}\label{eq-phi22}
\begin{split}
v\frac{\p \bph_{11}}{\p t}\q\wedge\bq\equiv &(v_1\baph1-v\bbaph11)\frac{\p }{\p t}(\q\wedge\bq)+v_{11}\frac{\p \bph}{\p t}\q\wedge\bq+(vD\bph_{1}-v_{1}D\bph)\wedge\frac{\p \bq}{\p t}~~\\
&+i(v\bph_{1}-v_{1}\bph)(\frac{\p \q_{12}}{\p t}-\frac{\p \q_{34}}{\p t})\wedge\bq. ~~
\end{split}
\end{equation}
\end{lemma}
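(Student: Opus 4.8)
The plan is to treat \eqref{eq-itm1n} and \eqref{eq-phi22} as two integration-by-parts identities, the point being to move the two barred (resp.\ unbarred) covariant derivatives off $\psi$ (resp.\ $\bph$) and onto $v$, while bookkeeping the failure of $\frac{\p}{\p t}$ to commute with covariant differentiation. Two structural facts do the work. First, a second covariant derivative is recorded by a covariant differential: by \eqref{eq-2ndpsi}, $\psi_{\bar1\bar1}\,\bq\wedge\q = D\psi_{\bar1}\wedge\q$ and $\bph_{11}\,\q\wedge\bq = D\bph_1\wedge\bq$, so one never has to differentiate a second covariant derivative head-on. Second, on $M^2\times\mathbb{R}$ one has $d = d_m + dt\wedge\frac{\p}{\p t}$ with $d_m$ and $\frac{\p}{\p t}$ commuting; hence $\frac{\p}{\p t}$ of a $d_m$-exact $2$-form is $d_m$-exact and drops out modulo exact $2$-forms.

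Concretely, I would first record the ``undifferentiated'' identity $(v\psi_{\bar1\bar1}-v_{\bar1\bar1}\psi)\,\bq\wedge\q = d_m\big[(v\psi_{\bar1}-v_{\bar1}\psi)\,\q\big]$, which follows from two applications of Lemma~\ref{lem-int1} — equivalently, from the fact that $v\psi_{\bar1}\q$ and $v_{\bar1}\psi\q$ are globally defined $1$-forms (a frame-weight count) so that $d_m$ of each is the pertinent first covariant differential wedged with $\q$. Then I would differentiate this in $t$, treating $v$ as $t$-independent, and commute $\frac{\p}{\p t}$ through $d_m$: the right side stays exact, while on the left $\frac{\p}{\p t}$ spreads over $v\psi_{\bar1\bar1}$, $v_{\bar1\bar1}\psi$ and the area $2$-form $\bq\wedge\q$, immediately producing the terms $v_{\bar1\bar1}\frac{\p\psi}{\p t}\,\bq\wedge\q$ and $(v\psi_{\bar1\bar1}-v_{\bar1}\psi_{\bar1})\frac{\p}{\p t}(\q\wedge\bq)$ together with remainders in which $\frac{\p}{\p t}$ has fallen on $v_{\bar1}$, $v_{\bar1\bar1}$ or $\psi_{\bar1}$. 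Those remainders I would clear out using the $t$-derivatives of the first-order structure relations $Dv = v_1\q + v_{\bar1}\bq$, $Dv_{\bar1} = v_{\bar11}\q + v_{\bar1\bar1}\bq$ and $D\psi = \psi_1\q + \psi_{\bar1}\bq$, plus $x_t^*(\O)=0$ (equivalently $d_m\q = -i\q_{12}\wedge\q$). Commuting $\frac{\p}{\p t}$ past the covariant differential $D = d_m - i(\text{connection forms})$ is exactly what generates the term $i(v_{\bar1}\psi - v\psi_{\bar1})\big(\frac{\p\q_{12}}{\p t}+\frac{\p\q_{34}}{\p t}\big)\wedge\q$, and the clash between $\frac{\p}{\p t}$ and the operation of extracting the $\bq$-component generates $\frac{\p\q}{\p t}$- and $\frac{\p\bq}{\p t}$-terms that, using $\frac{\p\q}{\p t}\wedge\q$ and $\frac{\p\bq}{\p t}\wedge\q$ identities and again working modulo exact $2$-forms, regroup into $(vD\psi_{\bar1}+\psi\,Dv_{\bar1})\wedge\frac{\p\q}{\p t}$. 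Assembling everything gives \eqref{eq-itm1n}; and \eqref{eq-phi22} comes from the identical argument with $\q$ and $\bq$ interchanged, $\psi_{\bar1}$ replaced by $\bph_1$ and $v_{\bar1}$ by $v_1$ — the only change being that the connection weights carried by $\bph$ are the conjugates of those carried by $\psi$, which is why $\frac{\p\q_{12}}{\p t}+\frac{\p\q_{34}}{\p t}$ becomes $\frac{\p\q_{12}}{\p t}-\frac{\p\q_{34}}{\p t}$.

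The hard part will not be any single step but the accounting. One has to (i) check the frame weights of each auxiliary product — $v\psi_{\bar1}\q$, $v_{\bar1}\psi\q$, and the analogues with $\frac{\p\psi}{\p t}$, $\frac{\p\psi_{\bar1}}{\p t}$, and with $\bph$ in place of $\psi$ — to be sure each is a genuinely globally defined $1$-form on the closed surface $M^2$, since that is what justifies discarding its $d_m$-differential; (ii) keep the three sources of commutator terms separate ($\frac{\p}{\p t}$ against the connection forms inside $D$, against the ``$\bq$-component'' operation, and against the area form $\q\wedge\bq$); and (iii) carry out the final regrouping, in which several leftover pieces of the shape $(\cdots)\frac{\p\q}{\p t}\wedge\q$ and $(\cdots)\frac{\p\q_{12}}{\p t}\wedge\q$ have to be absorbed into one more $d_m$-exact $2$-form so that the result collapses onto the compact right-hand sides displayed. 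Once the weight table and this organizing scheme are fixed, what remains is routine term-chasing.
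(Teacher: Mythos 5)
Your plan is correct and rests on the same ingredients as the paper's proof: the identification $\psi_{\bar1\bar1}\,\bq\wedge\q = D\psi_{\bar1}\wedge\q$ (so second covariant derivatives are only ever handled through covariant differentials of first ones), the commutator of $\frac{\p}{\p t}$ with $D$ producing the $\frac{\p \q_{12}}{\p t}+\frac{\p \q_{34}}{\p t}$ terms, and the discarding of globally defined $d_m$-exact $2$-forms justified by the weight count behind Lemma~\ref{lem-int1}. The one real difference is the order of operations. The paper differentiates $D\psi_{\bar1}\wedge\q$ in $t$ first and only afterwards integrates by parts against $v$ (and then repeats once more against $v_{\bar1}$ to remove $\frac{\p\psi_{\bar1}}{\p t}$); as a result $\frac{\p}{\p t}$ never lands on $v$, $v_{\bar1}$ or $v_{\bar1\bar1}$. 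You instead differentiate the pre-integrated identity $(v\psi_{\bar1\bar1}-v_{\bar1\bar1}\psi)\,\bq\wedge\q=d_m\big[(v\psi_{\bar1}-v_{\bar1}\psi)\q\big]$, which is valid, but it forces you to compute $\frac{\p v_{\bar1}}{\p t}$ and $\frac{\p v_{\bar1\bar1}}{\p t}$ --- nonzero even for $t$-independent $v$, since the frame and connection move --- and to check that together with the $\frac{\p v}{\p t}\,\psi_{\bar1\bar1}$ contribution they reassemble into $\psi\, Dv_{\bar1}\wedge\frac{\p\q}{\p t}+iv_{\bar1}\psi\big(\frac{\p\q_{12}}{\p t}+\frac{\p\q_{34}}{\p t}\big)\wedge\q-v_{\bar1}\psi_{\bar1}\frac{\p}{\p t}(\q\wedge\bq)$ modulo exact forms. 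This does work out (the $\frac{\p v}{\p t}$ terms cancel after one more integration by parts, so the identity is independent of how $v$ varies in $t$), but it roughly doubles the bookkeeping; the paper's ordering buys you that only $\psi$, $\psi_{\bar1}$ and the ambient forms ever get $t$-differentiated, at no cost in generality.
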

\begin{proof}
It follows from the definition \eqref{eq-2ndpsi} that
\beq
\begin{split}
\frac{\p \psi_{\bar1\bar1}}{\p t}\bq\wedge\q=&\frac{\p}{\p t}(D\psi_1\wedge \q)-\psi_{\bar1\bar1}\frac{\p }{\p t}(\q\wedge\bq)=D\psi_{\bar1}\wedge\frac{\p \q}{\p t}+\frac{\p}{\p t} (D \psi_{\bar1})\wedge\q-\psi_{\bar1\bar1}\frac{\p }{\p t}(\q\wedge\bq)\\
=&\psi_{\bar1\bar1}\frac{\p }{\p t}(\q\wedge\bq)+D\psi_{\bar1}\wedge\frac{\p \q}{\p t}+D\frac{\p \psi_{\bar1}}{\p t}\wedge\q-i\psi_{\bar1}(\frac{\p \q_{12}}{\p t}+\frac{\p \q_{34}}{\p t})\wedge\q, 
\end{split}
\eeq
where 
$ D\frac{\p \psi_{\bar1}}{\p t}
$ is the covariant differential of $\frac{\p \psi_{\bar1}}{\p t}$ defined as follows 
$$D\frac{\p \psi_{\bar1}}{\p t}=d\frac{\p \psi_{\bar1}}{\p t}-i(\q_{12}+\q_{34})\frac{\p \psi_{\bar1}}{\p t}.$$
As in the proof of Lemma~\ref{lem-int1}, we can derive that 
$$D\frac{\p \psi_{\bar1}}{\p t}\wedge (v\q)=-Dv\wedge(\frac{\p \psi_{\bar1}}{\p t}\q)+d_m(v\frac{\p \psi_{\bar1}}{\p t}\q)=-v_{\bar1}\frac{\p \psi_{\bar1}}{\p t} \bq\wedge\q+d_m(v\frac{\p \psi_{\bar1}}{\p t}\q),$$
where $\frac{\p \psi_{\bar1}}{\p t} \bq\wedge\q$ can be calculated as below 
$$\frac{\p \psi_{\bar1}}{\p t}\bq\wedge\q=\psi_{\bar1}\frac{\p }{\p t}(\q\wedge\bq)+D\psi\wedge\frac{\p \q}{\p t}+D\frac{\p \psi}{\p t}\wedge\q-i\psi(2\frac{\p \q_{12}}{\p t}+\frac{\p \q_{34}}{\p t})\wedge\q.$$
Note that 
$$D\frac{\p \psi}{\p t}\wedge (v_{\bar1}\q)=-Dv_{\bar1}\wedge(\frac{\p \psi}{\p t}\q)+d_m(v_{\bar1}\frac{\p \psi}{\p t}\q).$$
Hence we have 
\begin{equation}\label{eq-itm1}
\begin{split}
v\frac{\p \psi_{\bar1\bar1}}{\p t}\bq\wedge\q\equiv &(v\psi_{\bar1\bar1}-v_{\bar1}\psi_{\bar1})\frac{\p }{\p t}(\q\wedge\bq)+v_{\bar1\bar1}\frac{\p \psi}{\p t}\bq\wedge\q+(vD\psi_{\bar1}-v_{\bar1}D\psi)\wedge\frac{\p \q}{\p t}\\
&-iv\psi_{\bar1}(\frac{\p \q_{12}}{\p t}+\frac{\p \q_{34}}{\p t})\wedge\q+iv_{\bar1}\psi(2\frac{\p \q_{12}}{\p t}+\frac{\p \q_{34}}{\p t})\wedge\q,
\end{split}
\end{equation}
modulo those exact forms, which can be verified directly to be globally well-defined. 
Now  \eqref{eq-itm1n} follows from 
$$\vb1\frac{\p \theta}{\p t}\wedge D\psi=-d(\vb1\frac{\p \q}{\p t}\psi)+\psi D \vb1\wedge \frac{\p \q}{\p t}-i \vb1 \psi \frac{\p \q_{12}}{\p t}\wedge\q.$$
Similarly, we can prove \eqref{eq-phi22}. 
\end{proof}

It follows from the above lemma, \eqref{eq-pqt} and \eqref{eq-vthe12}  $\sim$  \eqref{eq-vphi} that the expressions of $\frac{\p \psi_{\bar1\bar1}}{\p t}$ and $\frac{\p \bph_{11}}{\p t}$ are given as follows: 
\begin{equation}\label{eq-2vpsi}
\begin{split}
v\frac{\p \psi_{\bar1\bar1}}{\p t}\equiv&\,\frac{1}{8}v_{\bar{1} \bar{1}} \Big(v K\left(\bar{e},\bar{n},\bar{e},\bar{n}\right)+\bar{v} K\left(\bar{e},\bar{n},\bar{e},n\right)+4
v \psi  \bar{H}+4 \phi  \bar{H} \bar{v}\Big)+|\bar{v}_{11}|^2+v_{\bar{1} 1} \left(\bar{v} \left| \psi \right| ^2+v \psi  \bar{\phi }\right)\\
&\,+v_{\bar{1}}{}^2\psi
 \bar{H}-2 |v_{\bar{1}}|^2 \left| \psi \right| ^2- v_{\bar{1}}\psi _{\bar{1}} \left(2v\bar{H} +\bar{v} H \right)+2 v \bar{v}_1 \bar{\psi }\psi _{\bar{1}}\\
&\,-\frac{1}{8}v_{\bar{1}}\psi \Big(v   K\left(e,\bar{e},e,\bar{n}\right)+v   K\left(n,\bar{n},e,\bar{n}\right)+  \bar{v} K\left(e,\bar{e},e,n\right)+
 \bar{v} K\left(n,\bar{n},e,n\right) \Big)\\
&\,+\frac{1}{8}|v|^2 \Big(\psi _{\bar{1}} \left( K\left(e,\bar{e},e,n\right)+ K\left(n,\bar{n},e,n\right)\right)+4 \psi
_{\bar{1} \bar{1}} H+8  \psi _{\bar{1} 1} \bar{\psi }\Big)\\
&\,+\frac{1}{8}v^2 \Big(\psi _{\bar{1}} \left(K\left(e,\bar{e},e,\bar{n}\right)+K\left(n,\bar{n},e,\bar{n}\right)\right)+4 \bar{H} \psi _{\bar{1} \bar{1}}+8
\psi _{\bar{1} 1} \bar{\phi }\Big)+i B_2 v \psi _{\bar{1} \bar{1}},
 \end{split}
\end{equation}
\begin{equation}\label{eq-2vphi}
\begin{split}
v\frac{\p \bph_{11}}{\p t}\equiv&\frac{1}{8} v_{11} \Big(v K\left(e,\bar{n},e,\bar{n}\right)+\bar{v} K\left(e,n,e,\bar{n}\right)+4 \bar{H} \bar{v}
\bar{\psi }+4 v \bar{H} \bar{\phi }\Big)+ |\bar{v}_{\bar{1} \bar{1}}|^2+ v_{1\bar{1}} \left(\bar{v} \left| \phi \right| ^2+v \psi  \bar{\phi }\right)\\
&+
v_1{}^2 \bar{H} \bar{\phi }-2 |v_1|^2 \left| \phi \right| ^2- v_1\bar{\phi }_1
\left(2 v \bar{H}+H \bar{v}\right)+2v \bar{v}_{\bar{1}} \phi\bar{\phi }_1 \\
&+\frac{1}{8}v_1\bar{\phi } \Big(v   K\left(e,\bar{e},\bar{e},\bar{n}\right)-v   K\left(n,\bar{n},\bar{e},\bar{n}\right)+\bar{v}  
K\left(e,\bar{e},\bar{e},n\right)-\bar{v}   K\left(n,\bar{n},\bar{e},n\right)\Big)\\
&+\frac{1}{8}|v|^2 \Big(\bar{\phi }_1 \left( K\left(n,\bar{n},\bar{e},n\right) -K\left(e,\bar{e},\bar{e},n\right)\right)+4
\bar\phi _{11} H +8 \phi    \bar{\phi }_{1\bar{1}}\Big)\\
&+\frac{1}{8}v^2 \Big(\bar{\phi }_1 \left(K\left(n,\bar{n},\bar{e},\bar{n}\right)-K\left(e,\bar{e},\bar{e},\bar{n}\right)\right)+4 \bar\phi _{11} \bar{H}+8
\psi  \bar{\phi }_{1\bar{1}}\Big)+i B_2 v \bar\phi _{11},
\end{split}
\end{equation}
where the notation $"\equiv"$ means equaling in the sense of integration. 

Substituting \eqref{eq-2vcurv}  $\sim$  \eqref{eq-2vphi} into \eqref{eq-2nd}, we can derive that 
\allowdisplaybreaks
\begin{align}
&\frac{\p^2}{\p t^2}\Big{|}_{t=0}\mathcal{W}(x) \nonumber \\
=&4\,\mathfrak{Re}\int_{M^2}\Bigg\{v_{11} \left(\frac{1}{2} K\left(e,\bar{n},e,V\right)+\bar{H} \left(v\bar{\phi }+ \bar{v}\bar{\psi
}\right)+\frac{1}{2} \bar{v} \left(H \bar{\phi }+\bar{H} \bar{\psi
}\right)\right)\nonumber \\
+&v_{\bar{1}\bar{1}}\left(\frac{1}{2} K\left(\bar{e},\bar{n},\bar{e},V\right)+\bar{H}(v \psi +\bar{v}\phi)+\frac{1}{2} \bar{v} \left(\bar{H}\phi +H
\psi \right)\right)\nonumber \\
+&v_{1\bar{1}} \left(\frac{1}{2} \bar{v} \left(\left| \psi \right| ^2+3 \left| \phi \right| ^2\right)+2 v \psi  \bar{\phi }\right)
+v_{\bar{1}1} \left(\frac{1}{2} \bar{v} \left(\left| \phi \right| ^2+3 \left| \psi \right| ^2\right)+2 v \psi  \bar{\phi }\right)\nonumber \\
+&v_1 \Bigg( \left(\frac{v \bar{\phi }}{2} +\frac{\bar{v}\bar{\psi }  }{4} \right)\text{Ric}\left(\bar{e},\bar{n}\right)+\frac{3\bar{v} \bar{\phi } }{8} \Big(K\left(e,\bar{e},\bar{e},n\right)-K\left(n,\bar{n},\bar{e},n\right)\Big)\nonumber \\
&~~~~~~-2\psi _{\bar{1}} \left( v
\bar{\phi }+\bar{v} \bar{\psi }\right)+ \bar{v} \left(\bar{\phi }\phi _{\bar{1}}+\bar{\psi }\psi _{\bar{1}}\right)-\bar{\phi }_1 H \bar{v}+v_1\bar{H}\bar{\phi}\Bigg)\nonumber \\
+&v_{\bar{1}} \Bigg(\left( \frac{v\psi }{2}+\frac{ \bar{v} \phi }{4}\right)\text{Ric}\left(e,\bar{n}\right)-\frac{3 \bar{v} \psi }{8}\Big(K\left(e,\bar{e},e,\bar n\right)+K\left(n,\bar{n},e,n\right)\Big)\nonumber \\
&~~~~~~-2\bar{\phi }_1 \left(\phi  \bar{v}+v \psi
\right)+\bar{v}\left(\phi\bar{\phi }_1+\psi  \bar{\psi }_1\right) -\psi _{\bar{1}} H \bar{v}+v_{\bar 1}\bar{H}\psi\Bigg)\nonumber \\
+&{\frac{ v^2}{32} }
\Bigg(4 \psi  \bar{\phi } \Big(K\left(e,n,\bar{e},\bar{n}\right)+K\left(e,\bar{n},\bar{e},n\right)\Big)+\left(4 \text{Ric}\left(\bar{n},\bar{n}\right)+64 \psi  \bar{\phi }+8 \bar{H}^2\right)\left(\left| \psi \right| ^2+\left|
\phi \right| ^2\right) \label{eq-original} \\
&~~~~~-2 \bar{H} \Big(\bar{\psi } K\left(\bar{e},\bar{n},\bar{e},\bar{n}\right)+\phi  K\left(e,\bar{n},e,\bar{n}\right)\Big)+6H \Big(\psi  K\left(e,\bar{n},e,\bar{n}\right)+
\bar{\phi } K\left(\bar{e},\bar{n},\bar{e},\bar{n}\right)\Big)\nonumber \\
&~~~~~+\text{Ric}\left(\bar{e},\bar{e}\right) K\left(e,\bar{n},e,\bar{n}\right)+
\text{Ric}(e,e) K\left(\bar{e},\bar{n},\bar{e},\bar{n}\right)+48 \psi  \bar{\phi } \left| H\right| ^2+64{\bar{\phi }_1 \psi _{\bar{1}}}\nonumber \\
&~~~~~+4\bar{\phi }_1\Big(K\left(e,\bar{e},\bar{e},\bar{n}\right)+K\left(n,\bar{n},\bar{e},\bar{n}\right)\Big)+4\psi
_{\bar{1}} \Big(K\left(n,\bar{n},e,\bar{n}\right)-K\left(e,\bar{e},e,\bar{n}\right)\Big)\Bigg)\nonumber \\
+&\frac{ \left| v\right| ^2 }{32}\Bigg(
{8 \left(\psi  \bar{\phi } \text{Ric}(n,n)+\phi  \bar{\psi } \text{Ric}(\bar n,\bar n)\right)+ 4\Big(K\left(e,n,\bar{e},\bar{n}\right)+K\left(e,\bar{n},\bar{e},n\right)\Big)\left(\left| \psi \right| ^2+\left| \phi \right| ^2\right)}\nonumber \\
&~~~~~+4H \Big(\bar{\psi } K\left(\bar{e},\bar{n},\bar{e},\bar{n}\right)+\phi  K\left(e,\bar{n},e,\bar{n}\right)+2
\bar{\phi } \text{Ric}\left(\bar{e},\bar{e}\right)+2\psi  \text{Ric}(e,e)\Big)\nonumber \\
&~~~~~+4\bar{H} \Big(\psi  K(e,n,e,n)+
\bar{\phi } K\left(\bar{e},n,\bar{e},n\right)+2 \phi  \text{Ric}(e,e)+2 \bar{\psi } \text{Ric}\left(\bar{e},\bar{e}\right)\Big)\nonumber \\
&~~~~~+48\left(H^2 \psi  \bar{\phi }+\bar{H}^2\phi   \bar{\psi }\right)+80\left| H\right|
^2 \left(\left| \psi \right| ^2+\left| \phi \right| ^2\right)
+192 \left| \psi \right| ^2 \left| \phi \right| ^2+32 \left(\left| \psi \right| ^4+\left| \phi \right| ^4\right)\nonumber \\
&~~~~~+\left| K\left(e,\bar{n},e,\bar{n}\right)\right|
^2+\left| K(e,n,e,n)\right| ^2+2 \left| \text{Ric}(e,e)\right| ^2-64\left( \left| \psi _{\bar{1}}\right| ^2+ \left| \phi _{\bar{1}}\right|
^2)\right.\nonumber \\
&~~~~~+4\psi _{\bar{1}} \Big(K\left(e,\bar{e},e,n\right)+K\left(n,\bar{n},e,n\right)\Big)-4\bar{\psi
}_1\Big(K\left(e,\bar{e},\bar{e},\bar{n}\right)+K\left(n,\bar{n},\bar{e},\bar{n}\right)\Big)\nonumber \\
&~~~~~+4\phi _{\bar{1}} \Big(K\left(e,\bar{e},e,\bar{n}\right)-K\left(n,\bar{n},e,\bar{n}\right)\Big)-4\bar{\phi }_1 \Big(
K\left(e,\bar{e},\bar{e},n\right)-K\left(n,\bar{n},\bar{e},n\right)\Big)\Bigg)\nonumber \\
+&2 \left(\left| v_{11}\right| ^2+\left | v_{\bar 1 \bar1}\right | ^2\right)-4 \left(\left| v_{\bar{1}}\right| ^2 \left| \psi \right| ^2+\left| v_1\right| ^2 \left| \phi \right| ^2\right)\nonumber \\
+&\frac{v}{4}\Bigg[(\nabla_V K)(e,n,e,\bar{n})\psi+(\nabla_V K)(\bar{e},\bar{n},\bar{e},\bar{n})\bar{\psi}+(\nabla_V K)(\bar{e},\bar{n},\bar{e},n)\bar{\phi}+(\nabla_V K)(e,\bar{n},e,\bar{n})\phi\Bigg]\Bigg\}dA_x, \nonumber 
\end{align}
where we have used the Codazzi equations \eqref{eq-Codazzi} to convert the covariant derivatives of $H$ to that of $\phi$ and $\psi$, and 
 $$\int_{M^2}v^2 \bar{\phi} \psi_{\bar 1 1}dA_x=-\int_{M^2}\left(v^2 \bar{\phi}_1 \psi_{\bar 1}+2v v_{\bar{1}} \bar{\phi}\psi_{\bar 1}\right)d A_x,$$
 $$\int_{M^2}v^2  \psi  \bar{\phi}_{1\bar 1 } dA_x=-\int_{M^2}\left(v^2  \psi_{\bar 1}  \bar{\phi}_1+2v v_{\bar{1}}  \psi  \bar{\phi}_{1}\right)d A_x,$$
 $$\int_{M^2}|v|^2  \bps  {\psi}_{\bar 1 1} dA_x=-\int_{M^2}\left(\left(v_{ 1} \bv+v\bv_{ 1} \right)\bps \psi_{\bar 1}+|v|^2|\psi_{\bar 1}|^2\right)d A_x,$$
 $$\int_{M^2}|v|^2  \phi  {\bph}_{1\bar 1} dA_x=-\int_{M^2}\left(\left(v_{ 1} \bv+v\bv_{ 1} \right)\phi_{\bar 1} \bph_{ 1}+|v|^2|\phi_{\bar 1}|^2\right)d A_x$$
 to deal with those terms containing the second order covariant derivatives of $\phi$ and $\psi$. 

Next, we simplify the expression of $\frac{\p^2}{\p t^2}\Big{|}_{t=0}\mathcal{W}(x)$. Set 
$$Ric(e)^\perp\triangleq\frac{1}{2}\big(\mathrm{Ric}(e,\bn)n+\mathrm{Ric}(e,n)\bn\big),$$
which is just 
the normal projection of $Ric(e)$;  
and 
$$\bll\alpha_1\otimes\beta_1,\alpha_2\otimes\beta_2\Brr\triangleq \bl\alpha_1\otimes\beta_1,\alpha_2\otimes\beta_2\Br+\bl\alpha_1\wedge\alpha_2, \beta_1\wedge\beta_2\Br,$$
which can be seen as a twisted bilinear inner metric defined on the vector bundle $NM^2\otimes NM^2$. 

It follows from the definition of $V, \vec H, B(e,e)$, and \eqref{eq-hessV} that  some coefficients of the terms  $\vaa$ and $\vbb$  in  \eqref{eq-original} can be collected as follows, 
\begin{equation}\begin{split}\label{eq-v11sim}
v_{11}K\left(e,\bar{n},e,V\right)+\bvaa K\left(e,n,e,V\right)&=\frac{1}{2}K(e,\He V(\Be,\Be),e,V)\\
&=\frac{1}{2}W(e,\He V(\Be,\Be),e,V)+\frac{1}{4}\bl\He V(\Be,\Be),V\Br \mathrm{Ric}(e,e),
\end{split}\end{equation}
\begin{equation}\begin{split}
(v_{11}\bar{v}+\bvaa v) \left(\bar{H} \bar{\psi }+H \bar{\phi }\right)&=\frac{1}{4} \bl \He V(\Be,\Be),V\Br \bl\B,\vH\Br,~~~~~~~~~~~~~~~~~~~~~~~~~~~~~~~~~~~~~~~~~~
\end{split}\end{equation}
\begin{equation}\begin{split}
(v_{11}\bar{H}+\bvaa H) \left(\bar{v} \bar{\psi }+v \bar{\phi }\right)&=\frac{1}{4} \bl \He V(\Be,\Be),\vH\Br \bl\B,V\Br,~~~~~~~~~~~~~~~~~~~~~~~~~~~~~~~~~~~~~~~~~~
\end{split}\end{equation}
\begin{equation}\begin{split}\label{eq-HessVN}
\left| v_{11}\right|^2+\left | v_{\bar 1\bar 1}\right |^2&=\frac{1}{8}\left|\He V(e,e)\right|^2.~~~~~~~~~~~~~~~~~~~~~~~~~~~~~~~~~~~~~~~~~~~~~~~~~~~~~~~~~~~~~~~~~~~~~~~~~~~~
\end{split}\end{equation} 
By the definition of $\ca(V)$ and \eqref{eq-hessV2}, we can rewrite the terms $\vab$ and $v_{\bar 1 1}$  in \eqref{eq-original} as below, 
\begin{equation}\begin{split}\label{eq-LplV}
&\mathfrak{Re}\Big[v_{1\bar{1}} \left(\frac{1}{2} \bar{v} \left(\left| \psi \right| ^2+3 \left| \phi \right| ^2\right)+2 v \psi  \bar{\phi }\right)
+v_{\bar{1}1} \left(\frac{1}{2} \bar{v} \left(\left| \phi \right| ^2+3 \left| \psi \right| ^2\right)+2 v \psi  \bar{\phi }\right)\Big]\\
=&\frac{1}{2}\big\langle \Delta^\perp V, \ca(V)\big\rangle-\frac{1}{8}\big\langle R_{e\bar e}^{\perp}V\wedge V, B(e,e)\wedge B(\bar e,\bar e)\big\rangle.
\end{split}\end{equation}
For the terms $v_1$ and $v_{\bar 1}$ in \eqref{eq-original}, using \eqref{eq-Weyl} and the fact that Weyl curvature tensor is trace-free, we have  
\begin{equation}\begin{split}
&K\left(e,\bar{e},\bar{e},n\right)-K\left(n,\bar{n},\bar{e},n\right)=W\left(e,\bar{e},\bar{e},n\right)-W\left(n,\bar{n},\bar{e},n\right)=2W\left(e,\bar{e},\bar{e},n\right),~~~~~~~~~~~~~~
 \end{split}\end{equation}
\begin{equation}\begin{split}
K\left(e,\bar{e},e, n\right)+K\left(n,\bar{n},e,n\right)=W\left(e,\bar{e},e, n\right)+W\left(n,\bar{n},e,n\right)=2W\left(e,\bar{e},e, n\right), ~~~~~~~~~~~~~~
 \end{split}\end{equation}
\begin{equation}\begin{split}
&v_1\bar{v} \bar{\phi } W\left(e,\bar{e},\bar{e},n\right)+\bar v_1v \bar{\psi } W\left(e,\bar{e},\bar{e},\bn\right)=\frac{1}{4}\bll\nb V\otimes \B, V\otimes W^\perp(e,\Be,\Be) \Brr, ~~~~~~~~~~~~~~
 \end{split}\end{equation}
\begin{equation}\begin{split}
&v_1 \Bigg( \frac{v \bar{\phi }}{2} \text{Ric}\left(\bar{e},\bar{n}\right)+\frac{\bar{v}\bar{\psi }  }{4} \text{Ric}\left(\bar{e},\bar{n}\right)\Bigg)
+\bar v_1 \Bigg( \frac{v \bar{\phi }}{4} \text{Ric}\left(\bar{e},n\right)+\frac{\bar{v}\bar{\psi }  }{2} \text{Ric}\left(\bar{e},n\right)\Bigg),\\
&~~~~~~~~~~~~~~~~~~=\frac{1}{8}\bl \nb V\otimes \B,Ric(\Be)^\perp\otimes V\Br+\frac{1}{16}\bll \nb V\otimes \B, Ric(\Be)^\perp\otimes V\Brr,
 \end{split}\end{equation}
\begin{equation}\begin{split}
&2\left(v_1\psi _{\bar{1}}+ \bar v_1\Ga\right)\left( v
\bar{\phi }+\bar{v} \bar{\psi }\right) - \left(v_1\bar{v}+\bar v_1 v\right) \left(\bar{\phi }\phi _{\bar{1}}+\bar{\psi }\psi _{\bar{1}}\right)~~~~~~~~~~\\
&~~~~~~~~~~~~~~~~~~ =\frac{1}{16}\bl \nb V\otimes \B,2\nabla_e B(\be,\be)\otimes V-V\otimes \nabla_e \Bb\Br,~~~~~~~~~~~~~~~~~~~
 \end{split}\end{equation}
\begin{equation}\begin{split}
&v_1\bar{\phi }_1 H \bar{v}+v_{\bar 1}\psi _{\bar{1}} H \bar{v}+\bar v_1\bar{\psi }_1 \bH {v}+\bar v_{\bar 1}\phi _{\bar{1}} \bH {v}=\frac{1}{4}\mathfrak{Re}\bll \nb \B\otimes \nb V, \vH\otimes V\Brr,~~~~~~~~~~~~~~
\end{split}\end{equation}
\begin{equation}\begin{split}
&v_1^2\bar{H}\bar{\phi}+v_{\bar 1}^2\bar{H}\psi+\bar v_{\bar1}^2{H}{\phi}+\bar v_{ 1}^2{H}\bar\psi=\frac{1}{4}\mathfrak{Re}\bll \nb V\otimes \B, \vH\otimes \nb V\Brr,~~~~~~~~~~~~~~~~~~~~~~~~
\end{split}\end{equation}
\begin{equation}\begin{split}\label{eq-vb1psi}
~~~~~~\left| v_{\bar{1}}\right| ^2 \left| \psi \right| ^2+\left| v_1\right| ^2 \left| \phi \right| ^2=\frac{1}{32}\bll \nabla_e^{\perp}V\otimes B(\bar e,\bar e),  \nb V\otimes\B\Brr.~~~~~~~~~~~~~~~~~~~~~~~~~~~~~~~~~~~
\end{split}\end{equation}
Similarly, we can use 
$$K\left(e,\bar{n},e,\bar{n}\right)=W\left(e,\bar{n},e,\bar{n}\right),~~~K\left(\bar{e},\bar{n},\bar{e},\bar{n}\right)=W\left(\bar{e},\bar{n},\bar{e},\bar{n}\right),~~~W\left(\bar{e},\bar{n},\bar{e},n\right)=0, ~~~W\left({e},{n},\bar{e},n\right)=0,$$
to simplify the terms $v^2$ and $|v|^2$ in \eqref{eq-original} as follows,  
\begin{equation}\begin{split}
&\mathfrak{Re}\Big[\bar{H} \Big(\bar{\psi } W\left(\bar{e},\bar{n},\bar{e},\bar{n}\right)+\phi  W\left(e,\bar{n},e,\bar{n}\right)\Big)+H \Big(\psi  W\left(e,\bar{n},e,\bar{n}\right)+
\bar{\phi } W\left(\bar{e},\bar{n},\bar{e},\bar{n}\right)\Big)\Big]~~~~~~~~\\
&~~~~~~~~~~~~~~~~~~~~~~~~~~~~~~~~~~~~~~~~~~~~~~~~~~~~~~~~~~~=2\mathfrak{Re}\Big(W\big(\Be,V,\Be,  V\big)\bl\B, \vH\Br\Big), 
\end{split}\end{equation}
\begin{equation}\begin{split}
&2\mathfrak{Re}\Big[v^2H \Big(\psi  W\left(e,\bar{n},e,\bar{n}\right)+
\bar{\phi } W\left(\bar{e},\bar{n},\bar{e},\bar{n}\right)\Big)\Big]+|v|^2H \Big(\bar{\psi } W\left(\bar{e},\bar{n},\bar{e},\bar{n}\right)+\phi  W\left(e,\bar{n},e,\bar{n}\right)\Big)\\
&~~~~~~~~~~~~~+|v|^2\bar{H} \Big(\psi  W(e,n,e,n)
+\bar{\phi } W\left(\bar{e},n,\bar{e},n\right)\Big)
=4\mathfrak{Re}\Big(W\big(\Be,V,\Be,  \vH\big)\bl\B, V\Br\Big), 
\end{split}\end{equation}
\begin{equation}\begin{split}
&|v|^2H \Big(\bar{\psi } W\left(\bar{e},\bar{n},\bar{e},\bar{n}\right)+\phi  W\left(e,\bar{n},e,\bar{n}\right)\Big)
+|v|^2\bar{H} \Big(\psi  W(e,n,e,n)
+\bar{\phi } W\left(\bar{e},n,\bar{e},n\right)\Big)~~~~~~~\\
&~~~~~~~~~~~~~~~~~~~~~~~~~~~~~~~~~~~~~~~~~~~~~~~~~~~~~~~~~~~~~~~=2\mathfrak{Re}\Big(\left|V\right|^2W\big(\Be, \vH,\Be, \B\big)\Big), 
\end{split}\end{equation}
\begin{equation}\begin{split}
&H\Big(\bar{\phi } \text{Ric}\left(\bar{e},\bar{e}\right)+\psi  \text{Ric}(e,e)\Big)+\bH\Big( \phi  \text{Ric}(e,e)+ \bar{\psi } \text{Ric}\left(\bar{e},\bar{e}\right)\Big)~~~~~~~~~~~~~~~~~~~~~~~~~~~~~~~~~~~~~~\\
&~~~~~~~~~=\frac{1}{2}\Big(\text{Ric}(\Be,\Be)\bl\B,\vH\Br+\text{Ric}(e,e)\bl\Bb,\vH\Br\Big), 
\end{split}\end{equation}
\begin{equation}\begin{split}
~~~~~~2\mathfrak{Re}\Big(v^2(\bar{H}^2\left(\left| \psi \right| ^2+\left|\phi \right| ^2\right)+6 \psi  \bar{\phi } \left| H\right| ^2)\Big)
+|v|^2\Big(6\left(H^2 \psi  \bar{\phi }+\bar{H}^2\phi   \bar{\psi }\right)+10\left| H\right|
^2 \left(\left| \psi \right| ^2+\left| \phi \right| ^2\right)\Big)\\
={6}\bl \vH,V\Br \bl \ca(V),\vH\Br+\left|  B(e,e) \right|^2\left|\vH\wedge V\right|^2~~~~~~~~~~~~~~~~~~~~~~~~~\\
=3\Big(|\vH|^2\bl \ca(V),V\Br+|V|^2\bl \ca(\vH),\vH\Br\Big)-\frac{1}{2}\left|  B(e,e) \right|^2\left|\vH\wedge V\right|^2,\!\!\!
\end{split}\end{equation}
\begin{equation}\begin{split}
v^2\text{Ric}\left({e},{e}\right) W\left(\Be,\bar{n},\Be,\bar{n}\right)+\bar v^2\text{Ric}\left({e},{e}\right) W\left(\Be,{n},\Be,{n}\right)=4\text{Ric}\left({e},{e}\right) W\left(\Be,V,\Be,V\right),~~~~~~~~~~~~~\;
\end{split}\end{equation}
\begin{equation}\begin{split}
v^2{\bar{\phi }_1 \psi _{\bar{1}}}+\bar v^2\phi_{\bar 1} \bar\psi _{1}-\left| v\right|^2\left( \left| \psi _{\bar{1}}\right|^2+ \left| \phi _{\bar{1}}\right|
^2\right)=-\frac{1}{16}\left | V\wedge \nb B(e,e) \right|^2,~~~~~~~~~~~~~~~~~~~~~~~~~~~~~~\;
\end{split}\end{equation}
\begin{equation}\begin{split}
~~v^2\bar{\phi }_1W\left(e,\bar{e},\bar{e},\bar{n}\right)+\bar v^2 \bar\psi
_{{1}} W\left(e,\bar{e},\Be,{n}\right)-|v|^2\bar{\phi }_1 W\left(e,\bar{e},\bar{e},n\right)-|v|^2\bar{\psi
}_1W\left(e,\bar{e},\bar{e},\bar{n}\right)&~~~~~~~~~~~~~~\\
=\frac{1}{2}\bl W^\perp(e,\Be,\Be)\wedge V, V\wedge\nb\B\Br&,
\end{split}\end{equation}
\begin{equation}\begin{split}
&\left(2|v|^2\bar\psi  {\phi }+v^2(\left| \psi \right| ^2+\left|
\phi \right| ^2)\right)\text{Ric}\left(\bar{n},\bar{n}\right)+\Big( 2|v|^2\psi  \bar{\phi }+ \bar v^2(\left| \psi \right| ^2+\left|
\phi \right| ^2)\Big)\text{Ric}\left({n},{n}\right),\\
&~+\left(v^2 \psi  \bar{\phi } +\bar v^2 \bar\psi  {\phi }+|v|^2(\left| \psi \right| ^2+\left|
\phi \right| ^2) \right)\Big(K\left(e,n,\bar{e},\bar{n}\right)+K\left(e,\bar{n},\bar{e},n\right)\Big)
=2\bl \ric(V), \ca(V)\Br, 
\end{split}\end{equation}
\begin{equation}\begin{split}\label{eq-2cav}
2\left(v^2 \psi  \bar{\phi } +\bar v^2 \bar\psi  {\phi }\right)\left(\left| \psi \right| ^2+\left|
\phi \right| ^2\right)
+|v|^2\left(\left| \psi \right| ^4+\left|
\phi \right| ^4+6\left| \phi \right| ^2\left| \psi \right| ^2\right)
=\frac{1}{4}\bl \ca(V), \ca(V)\Br.~~~~~~
\end{split}\end{equation}
In conclusion, 
using \eqref{eq-v11sim}  $\sim$  \eqref{eq-2cav}, we can obtain the simplified second variation formula of $\mathcal{W}$. 
\begin{theorem}\label{thm-2ndvar}
Let $x:M^2\rightarrow (N^4, [g])$ be a closed Willmore surface, then 
 $\frac{\p^2}{\p t^2}\Big{|}_{t=0}\mathcal{W}(x)$ equals 
\begin{align}
\!\!\!\!\!\!\!\!\!\!\!\!\!\!\!\!\!\!\!\!\!
\int_{M^2}\Bigg\{&
\Big(\Big|\He V(e,e)+\frac{1}{2}\mathrm{Ric}(e,e) V\Big|^2+2\big\langle \Delta^\perp V, \ca(V)\big\rangle-\frac{1}{2}\big\langle R_{e\bar e}^{\perp}V\wedge V, B(e,e)\wedge B(\bar e,\bar e)\big\rangle \nonumber\\
&~~~~~~~~~-\frac{1}{2}\bll \nabla_e^{\perp}V\otimes B(\bar e,\bar e),  \nb V\otimes\B\Brr +\bl \ric(V), \ca(V)\Br+\bl\ca(V),\ca(V)\Br
\Big) \nonumber\\
&+2
\mathfrak{Re}
\Big(
\frac{1}{2}\bl \He V(\Be,\Be),V\Br \bl\B,\vH\Br+\bl \He V(\Be,\Be),\vH\Br \bl\B,V\Br \nonumber\\
&~~~~~~~~~+W\big(e,V, e, \He V(\Be,\Be)+\frac{1}{2}\mathrm{Ric}(\be,\be)V+2\bl  B(\be,\be),V\Br\vH-\frac{1}{2}\bl  B(\be,\be), \vH\Br V\big) \nonumber\\
&~~~~~~~~~+\bl \nb V\otimes \B,\frac{1}{2}Ric(\Be)^\perp\otimes V-\frac{1}{2}\n\Bb\otimes V+\frac{1}{4}V\otimes \n \Bb\Br \nonumber\\
&~~~~~~~~~+\bll \nb V\otimes \B, \frac{1}{4}Ric(\Be)^\perp\otimes V+\frac{3}{4}V\otimes W^\perp(e,\Be,\Be)+\frac{1}{2}\vH\otimes \nb V\Brr \nonumber\\
&~~~~~~~~~-\frac{1}{2}\bll \nb \B\otimes \nb V, \vH\otimes V\Brr+\frac{1}{2}\bl W^\perp(e,\Be,\Be)\wedge V, V\wedge\nb\B\Br \label{eq-ori2ndvar}\\
&~~~~~~~~~+\frac{1}{2}\left|V\right|^2\left(\mathrm{Ric}(e,e)\bl \Bb,\vH\Br - W\big(\Be, \vH,\Be, \B\big)\right)-\frac{1}{4}\Big| V\wedge \nb B(e,e)\Big|^2 \nonumber\\
&~~~~~~~~~+\frac{1}{4}\left|  W(e,\bn, e,V)\right|^2+\frac{1}{4}\left|  W(e, n, e,V)\right|^2+\frac{1}{8}\left|  \mathrm{Ric}(e,e) V\right|^2 \nonumber\\
&~~~~~~~~~+\frac{1}{2}\left|  B(e,e) \right|^2\Big|\vH\wedge V\Big|^2+{3}\bl \vH,V\Br\bl \ca(V),\vH\Br+\nabla_V K\big(\Be, V,\Be,\B\big) \nonumber
\Big)\Bigg\}dA_x. 
\end{align}
\end{theorem}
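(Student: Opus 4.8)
Looking at this problem, the goal is to derive the simplified second variation formula for the Willmore functional stated in Theorem~\ref{thm-2ndvar}, given the lengthy intermediate expression in \eqref{eq-original}.

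\textbf{Overall strategy.} The proof is essentially a long organized computation. The plan is to start from the raw second variation \eqref{eq-original}, which is expressed in terms of the scalar covariant derivatives $v_1, v_{\bar1}, v_{11}, v_{\bar1\bar1}, v_{1\bar1}, v_{\bar11}$ of the complex variation function $v$ together with the tensor entries $\phi,\psi,H$ and components of the ambient curvature tensor $K$, and to repackage every term into the manifestly geometric (and conformally transparent) quantities appearing in \eqref{eq-ori2ndvar}: the normal Hessian $\He V$, the normal Laplacian $\Delta^\perp V$, the normal curvature $R^\perp_{e\bar e}$, the sections $\ca(V)$ and $\ric(V)$, the Weyl tensor $W$, the mean curvature $\vec H$, the second fundamental form $B$, and the twisted inner product $\bll\cdot,\cdot\Brr$.

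\textbf{Key steps, in order.} First I would recall the dictionary between the scalar derivatives and the geometric operators: from \eqref{eq-nbev}--\eqref{eq-hessV2} one has $\nb V = v_1\bn + \bar v_1 n$, $\He V(\Be,\Be) = 2(v_{11}\bn + \bar v_{11}n)$, and $2(v_{1\bar1}\bn + \bar v_{1\bar1}n) = \Delta^\perp V - \tfrac12 R^\perp_{e\Be}V$; together with \eqref{eq-avv} expressing $\bl\ca(V),V\Br$. Second, I would group the terms of \eqref{eq-original} by their order in the derivatives of $v$: (a) the purely second-order terms $|v_{11}|^2 + |v_{\bar1\bar1}|^2$ and $v_{11}, v_{\bar1\bar1}$ linear terms, handled by \eqref{eq-v11sim}--\eqref{eq-HessVN}, assembling the square $|\He V(e,e) + \tfrac12\mathrm{Ric}(e,e)V|^2$ and the Weyl term $W(e,\He V(\Be,\Be),e,V)$; (b) the mixed second-order terms $v_{1\bar1}, v_{\bar11}$, handled by \eqref{eq-LplV}, producing $\langle\Delta^\perp V,\ca(V)\rangle$ and the $R^\perp$ term; (c) the first-order terms $v_1,v_{\bar1}$ (and $|v_1|^2,|v_{\bar1}|^2$), handled by the block \eqref{eq-vb1psi}ff., producing the $\bll\nb V\otimes\cdot\Brr$ terms and the Ricci/Weyl/$\nabla B$ couplings; (d) the zeroth-order terms in $v$ but quadratic $v^2,|v|^2$, handled by the block through \eqref{eq-2cav}, producing $\bl\ca(V),\ca(V)\Br$, $\bl\ric(V),\ca(V)\Br$, $|V\wedge\nb B(e,e)|^2$, and the remaining Weyl and $|B(e,e)|^2|\vH\wedge V|^2$ terms; (e) the $\nabla_V K$ term, which I would convert using \eqref{eq-Weyl} and the second Bianchi identity so only $\nabla_V K(\Be,V,\Be,\B)$ survives. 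Throughout, I would repeatedly invoke Lemma~\ref{lem-int1} to integrate by parts (this is how the second-order derivatives of $\phi,\psi$ in \eqref{eq-original} were already eliminated, as noted), and use the trace-free property of $W$ — e.g. $W(e,\bn,e,n)=0$, $W(\be,\bn,\be,n)=0$ — together with \eqref{eq-Weyl} to replace $K$-components by $W$-components plus Ricci terms wherever the identities \eqref{eq-v11sim}--\eqref{eq-2cav} require.

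\textbf{Main obstacle.} The hard part is purely organizational/combinatorial: \eqref{eq-original} contains on the order of several dozen monomials in $\{v,v_1,v_{11},\dots\}\times\{\phi,\psi,H\}\times\{\text{curvature}\}$, and one must verify that the totality of them reassembles \emph{exactly} into the geometric expression of Theorem~\ref{thm-2ndvar} with no leftover terms — in particular the bookkeeping of numerical coefficients (the $1/8$, $1/16$, $1/32$, the $3$'s and $6$'s) and of which $K$-components become Weyl components versus Ricci components. There is no conceptual difficulty beyond care; the identities \eqref{eq-v11sim} through \eqref{eq-2cav} are precisely the needed reductions, and the proof consists of substituting them into \eqref{eq-original} and collecting. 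I would present the proof as: ``\emph{Substituting \eqref{eq-2vcurv}--\eqref{eq-2vphi} into \eqref{eq-2nd} yields \eqref{eq-original}; then \eqref{eq-v11sim}--\eqref{eq-2cav} rewrite each term, and collecting gives \eqref{eq-ori2ndvar}.}'' with the verification of a few representative identities (say \eqref{eq-LplV} and \eqref{eq-2cav}) spelled out and the rest asserted to follow ``by a direct computation using \eqref{eq-Weyl}, the Bianchi identity, and Lemma~\ref{lem-int1}.''
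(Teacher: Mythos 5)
Your proposal follows essentially the same route as the paper: the paper derives the raw expression \eqref{eq-original} by substituting \eqref{eq-2vcurv}--\eqref{eq-2vphi} into \eqref{eq-2nd} (using Lemma~\ref{lem-int1} to eliminate second derivatives of $\phi,\psi$), and then applies precisely the block of identities \eqref{eq-v11sim}--\eqref{eq-2cav} --- grouped exactly as you describe, by order in the derivatives of $v$, with \eqref{eq-Weyl} and the trace-free property of $W$ converting $K$-components into Weyl and Ricci pieces --- to collect the terms into \eqref{eq-ori2ndvar}. The only cosmetic difference is that the conversion of the $\nabla_V K$ term is a direct repackaging via the definitions of $V$ and $B(\be,\be)$ and does not require the second Bianchi identity, but this does not affect the argument.
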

\begin{remark}\label{rk-2ndform}
We can also use the following Ricci identities  
\begin{align}
&\vab-\vBa=-\frac{R_{1234}v}{2},\label{eq-vRicci1}\\
&\vaba-\vaab=\frac{1}{2}\left(R_{1234}-R_{1212}\right)v_1,\label{eq-vRicci2}\\
&\vbba-\vbab=\frac{1}{2}\left(R_{1234}+R_{1212}\right)v_{\bar1} \label{eq-vRicci3}
\end{align}
to simplify some terms in \eqref{eq-original} as below  (comparing with \eqref{eq-HessVN}, \eqref{eq-LplV} and \eqref{eq-vb1psi})
\begin{equation}\begin{split}
&\left| v_{11}\right|^2+\left | v_{\bar 1\bar 1}\right |^2-2\left(\left| v_{\bar{1}}\right| ^2 \left| \psi \right| ^2+\left| v_1\right| ^2 \left| \phi \right| ^2\right)~~~~~~~\\
&~=\frac{1}{8}\left|\Delta^\perp V\right|^2-\frac{1}{4}K_{1212}|\nb V|^2+K_{1234}\left(|v_1|^2-|v_{\bar 1}|^2\right)+\frac{1}{8}R_{1234}^2|V|^2~~~~~~~~~\\
&~=\frac{1}{8}\left|\Delta^\perp V\right|^2-\frac{1}{4}K_{1212}|\nb V|^2+\frac{1}{8}W(e,\Be,\nb V, \n V)+\frac{1}{8}R_{1234}^2|V|^2,~~~~~~~~~~~~~~~
\end{split}\end{equation}
\begin{equation}\begin{split}
&\mathfrak{Re}\Big[v_{1\bar{1}} \left(\frac{1}{2} \bar{v} \left(\left| \psi \right| ^2+3 \left| \phi \right| ^2\right)+2 v \psi  \bar{\phi }\right)
+v_{\bar{1}1} \left(\frac{1}{2} \bar{v} \left(\left| \phi \right| ^2+3 \left| \psi \right| ^2\right)+2 v \psi  \bar{\phi }\right)\Big]\\
&~=\frac{1}{2}\big\langle \Delta^\perp V, \ca(V)\big\rangle-\frac{1}{4}R_{1234}^2|V|^2+\frac{1}{4}K_{1234}^2|V|^2-\frac{1}{32}|V|^2W\left(e,\Be,\B,\Bb\right). 
\end{split}\end{equation}
Then the second variation formula of the Willmore functional $\mathcal{W}$ can be rewritten as follows, 
\begin{align}
\frac{\p^2}{\p t^2}\Big{|}_{t=0}\mathcal{W}(x)&=
\int_{M^2}\Bigg\{
\Big(
\left|\Delta^\perp V\right|^2-2K_{1212}|\nb V|^2+2\big\langle \Delta^\perp V, \ca(V)\big\rangle+W(e,\Be,\nb V, \n V) \nonumber \\
-\frac{|V|^2}{8}&W\left(e,\Be,\B,\Bb\right)+\bl \ric(V), \ca(V)\Br+\bl\ca(V),\ca(V)\Br+K_{1234}^2|V|^2 
\Big) \nonumber\\
 \!\!\!+2
\mathfrak{Re}\;&\Big(
\frac{1}{2}\bl \He V(\Be,\Be),V\Br \bl\B,\vH\Br+\bl \He V(\Be,\Be),\vH\Br \bl\B,V\Br \nonumber \\
&+W\big(e,V, e, \He V(\Be,\Be)+\frac{1}{2}\mathrm{Ric}(\be,\be)V+2\bl  B(\be,\be),V\Br\vH-\frac{1}{2}\bl  B(\be,\be), \vH\Br V\big) \nonumber \\
&+\frac{1}{2}\bl  \He V(\Be,\Be), V\Br \mathrm{Ric}(e,e)+\frac{1}{8}\left|  \mathrm{Ric}(e,e) V\right|^2+\nabla_V K\big(\Be, V,\Be,\B\big) \nonumber \\
&+\bl \nb V\otimes \B,\frac{1}{2}Ric(\Be)^\perp\otimes V-\frac{1}{2}\n\Bb\otimes V+\frac{1}{4}V\otimes \n \Bb\Br \label{eq-2nd2ndvar} \\
&+\bll \nb V\otimes \B, \frac{1}{4}Ric(\Be)^\perp\otimes V+\frac{3}{4}V\otimes W^\perp(e,\Be,\Be)+\frac{1}{2}\vH\otimes \nb V\Brr \nonumber \\
&-\frac{1}{2}\bll \nb \B\otimes \nb V, \vH\otimes V\Brr+\frac{1}{2}\bl W^\perp(e,\Be,\Be)\wedge V, V\wedge\nb\B\Br \nonumber\\
&+\frac{1}{2}\left|V\right|^2\left(\mathrm{Ric}(e,e)\bl \Bb,\vH\Br - W\big(\Be, \vH,\Be, \B\big)\right)-\frac{1}{4}\left| V\wedge \nb B(e,e)\right|^2 \nonumber \\
&+\frac{1}{4}\left|  W(e,\bn, e,V)\right|^2+\frac{1}{4}\left|  W(e, n, e,V)\right|^2+\frac{1}{2}\left|  B(e,e) \right|^2\left|\vH\wedge V\right|^2+{3}\bl \vH,V\Br\bl \ca(V),\vH\Br
\Big)\Bigg\}dA_x.  \nonumber
\end{align}

Especially, in $\mathbb{S}^4$, for closed minimal surfaces, we have 
\begin{equation*}
\begin{split}\frac{\p^2}{\p t^2}\Big{|}_{t=0}\mathcal{W}(x)=
\int_{M^2}
\Big(
\left|\Delta^\perp V\right|^2+2\bl\Delta^\perp V,V\Br+2\big\langle \Delta^\perp V, \ca(V)\big\rangle
+{2}\bl V, \ca(V)\Br+\bl\ca(V),\ca(V)\Br
\Big)dA_x, 
\end{split}
\end{equation*}
which is well-known (see \cite{Weiner,Ndiaye-Schatzle,Wang-Kusner}). 
\end{remark}

\section{The strictly Willmore stability of Clifford torus in $\mathbb{C}P^2$}\label{sec-torus}
In this section, we prove that the Clifford torus $T^2$ embedded in $\mathbb{C}P^2$ defined by \eqref{eq-T2} is strictly Willmore-stable, and also strictly stable for the functional $\mathcal{W}^-$, which strengthens the conjecture of Montiel and Urbano stated in the introduction. 

As pointed in \cite{Jensen-Liao}, the Clifford torus $T^2$ 
can be viewed as an isometric immersion of the flat torus 
{$$\mathbb{R}^2/\mathrm{Span}_\mathbb{Z}\{(\frac{\pi}{\sqrt 3}, -\frac{\pi}{3}),~(\frac{\pi}{\sqrt 3}, \frac{\pi}{3})\}$$}
into $\mathbb{C}P^2$ by 
$$x=\frac{1}{\sqrt{3}}[e^{2i s},~e^{i(\sqrt 3 t-s)},~e^{i(\sqrt 3 t+s)}].$$
The metric on this torus is given by 
$$2(dt^2+ds^2).$$
It is well known that  the horizontal lift of $x$ in $\mathbb{S}^5$ is also minimal, which is known as the equilateral torus. 

Define
$$e_1\triangleq \frac{1}{\sqrt{2}}x_*(\frac{\p}{\p t}),~~~e_2\triangleq \frac{1}{\sqrt{2}}x_*(\frac{\p}{\p s}),~~~e_3\triangleq -J(e_1),~~~e_4\triangleq J(e_2).$$
It is easy to verify that $\{e_1, e_2\}$ is the orthonormal tangent frame of $x$, while  $\{e_3, e_4\}$ is the  orthonormal normal frame.  In terms of the complex frame, we have 
\beq
J(e)=-\bar n,~~~J(\be)=-n,~~~J(n)=\be,~~~J(\bn)=e.
\eeq
We summary the basic geometric invariants of $x$ into the following lemma, which can be easily verified (for reference, see \cite{Ludden-Okamura-Yano}).  
\begin{lemma}
On the Clifford torus, we have 
\beq\label{eq-H11-torus}
H=0,~~~\psi=0,~~~\phi=\frac{\sqrt 2 i}{2},~~~\o_{12}=0,~~~\o_{34}=0,
\eeq
\beq\label{eq-psiphi-torus}
\phi_{1}=0,~~~\phi_{\bar 1}=0,~~~\psi_1=0,~~~\psi_{\bar 1}=0,~~~R_{1212}=0,~~~R_{1234}=0.
\eeq
\end{lemma}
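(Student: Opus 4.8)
The plan is to reduce the whole lemma to an explicit computation on the horizontal lift of $x$ to the Hopf fibration $\pi:\mathbb{S}^5\to\mathbb{C}P^2$, where the ambient geometry is simply the flat geometry of $\mathbb{C}^3$. Writing $\hat x=e^{-if}\tilde x$ for the horizontal lift of $\tilde x=\frac{1}{\sqrt 3}(e^{2is},e^{i(\sqrt3 t-s)},e^{i(\sqrt3 t+s)})$, with $f(t,s)$ chosen so that $\hat x_t,\hat x_s$ are orthogonal to the Hopf direction $i\hat x$, I would differentiate $\hat x$ directly. Since $\pi$ is a Riemannian submersion with totally geodesic fibres, the induced metric, the second fundamental form, and the normal connection of $x$ are all read off from the horizontal data of $\hat x$. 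A direct calculation shows the induced metric has constant coefficients (the stated $2(dt^2+ds^2)$), so the surface is flat; this gives $R_{1212}=0$, and because $e_1,e_2$ are constant multiples of the coordinate frame, whose Christoffel symbols vanish, the tangential part of $\nabla_{e_i}e_j$ is zero, i.e. $\omega_{12}=0$ and $\nabla_{e_i}e_j=B(e_i,e_j)$.

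Next I would exploit the K\"ahler--Lagrangian structure encoded in $J(e)=-\bn$, $J(\be)=-n$, $J(n)=\be$, $J(\bn)=e$: these say precisely that $J$ carries $TM^2$ to $NM^2$, so $x$ is Lagrangian. Since $J$ is parallel, $\nabla_X e_3=\nabla_X(-Je_1)=-J\nabla_X e_1=-J\,B(X,e_1)$, and as $J$ sends normal vectors to tangent vectors this is tangential; hence $\nabla^\perp_X e_3=0$, that is $\omega_{34}=0$, and the flatness of the normal connection it expresses forces the normal Gauss curvature $R_{1234}=0$. For minimality I would invoke the fact, already recorded in the excerpt (and in \cite{Ludden-Okamura-Yano}), that the equilateral torus $\hat x$ is minimal in $\mathbb{S}^5$; minimality of a horizontal submanifold passes to the base under the submersion, so $\vec H=0$ and therefore $H=0$. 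Being minimal Lagrangian in the self-dual K\"ahler--Einstein manifold $\mathbb{C}P^2$, the torus is super-minimal with positive spin, so $\psi=0$ by Remark~\ref{rk-orientaion} (this can equally be checked directly from the lift computation).

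It then remains to evaluate $\phi$. From $\B=2\bps\,\bn+2\bph\,n=(\nabla_e e)^\perp$ and the bilinear inner product, pairing against $\bn$ isolates $\langle\B,\bn\rangle=4\bph$, and carrying out the corresponding $\mathbb{C}^3$ computation (with the Fubini--Study normalization fixed so that the induced metric is exactly $2(dt^2+ds^2)$) yields the constant value $\phi=\frac{\sqrt 2\,i}{2}$. Once $\phi$ is known to be constant and $\theta_{12}=\theta_{34}=0$, the defining relation $D\phi=d\phi-i(2\theta_{12}-\theta_{34})\phi$ is identically zero, giving $\phi_1=\phi_{\bar1}=0$; and since $\psi\equiv0$ its covariant differential $D\psi$ vanishes trivially, giving $\psi_1=\psi_{\bar1}=0$. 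As a consistency check one may verify $R_{1234}=0$ a second way through the Ricci equation \eqref{eq-Ricci}.

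The only genuinely delicate step is the Hopf-submersion bookkeeping in the third paragraph: one must split $\hat x_t,\hat x_s$ correctly into horizontal and vertical parts, track the O'Neill correction when projecting the flat $\mathbb{C}^3$ covariant derivative down to $\mathbb{C}P^2$, and fix the curvature normalization so that both the induced metric comes out as $2(dt^2+ds^2)$ and the constant emerges as $\frac{\sqrt 2\,i}{2}$ rather than a rescaled value. Everything else—flatness, vanishing of $\omega_{12}$ and $\omega_{34}$, and the derivative identities—is routine once minimality and the Lagrangian relations are in hand.
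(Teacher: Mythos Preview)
Your proposal is correct; the paper itself gives no proof beyond the sentence ``which can be easily verified (for reference, see \cite{Ludden-Okamura-Yano}),'' so any honest direct computation suffices, and yours is a sound one. Your structural shortcuts---flatness of the induced metric giving $\theta_{12}=0$ and $R_{1212}=0$, the Lagrangian identity $J(TM)=NM$ together with parallelism of $J$ forcing $\theta_{34}=-\theta_{12}=0$, and the symmetric trilinear form $\langle B(\cdot,\cdot),J\cdot\rangle$ yielding $\psi=H/2=0$ for a minimal Lagrangian surface---are all valid and make the argument cleaner than a bare coordinate check. The only place you must be careful, as you note yourself, is the explicit evaluation of $\phi$: the constant $\tfrac{\sqrt{2}\,i}{2}$ depends on the Fubini--Study normalization (holomorphic sectional curvature $4$, so that $K_{1212}=1$ along a Lagrangian plane) and on the specific phase convention for $n$; once those are fixed by the frame $e_3=-J e_1$, $e_4=J e_2$, the value follows from computing $\langle(\nabla_e e)^\perp,\bar n\rangle=4\bar\phi$ on the horizontal lift. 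Everything downstream ($\phi_1=\phi_{\bar1}=0$ from $d\phi=0$ and $\theta_{12}=\theta_{34}=0$, and $\psi_1=\psi_{\bar1}=0$ trivially) is immediate.
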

Along $x$, by the following well-known formula 
\beq\label{eq-Kxyzw}
\begin{split}
K(X,Y,Z,W)=\bl X,Z\Br\bl Y,W\Br-\bl X,W\Br\bl Y,Z\Br+\bl X, J(Z)\Br\bl Y, J(W)\Br-\bl X, J(W)\Br\bl Y, J(Z)\Br\\
+2\bl X,J(Y)\Br\bl Z, J(W)\Br,
\end{split}
\eeq
we can see that the terms involving the ambient curvature tensor of $\mathbb{C}P^2$ in the second variation formula \eqref{eq-original} are all equal to zero, except 
\beq\label{eq-curvature-CP2-torus}
K_{1212}=1,~K_{1234}=-1,~K(e,\bn,\be,n)=8,~K(e,n,\be,\bn)=12,~K(e,n,e,n)=12,~K(\be,\bn,\be,\bn)=12.
\eeq

It follows from \eqref{eq-original}, \eqref{eq-psiphi-torus} and \eqref{eq-curvature-CP2-torus} that 
\beq\label{eq-2nd-torus}
\begin{split}
\frac{\p^2}{\p t^2}\Big{|}_{t=0}\mathcal{W}(x)&=4\int_{T^2}\Big(2\big(\left | v_{11}\right | ^2 +\left | v_{\bar 1\bar1}\right |^2 \big)+6|v|^2+ 3vv_{\bar1\bar 1}+3\bv\bv_{1 1}-2|v_{ 1}|^2+\bv v_{1\bar 1} +v \bv_{\bar 1 1}\Big)dA_x\\
&=4\int_{T^2}\Big(2\big(\left | v_{11}\right | ^2 +\left | v_{\bar 1\bar1}\right |^2 \big)+6|v|^2-3v_{\bar1}^2-3\bv_{1}^2-4|v_{1}|^2\Big)dA_x,
\end{split}
\eeq
where we have used the integration by parts, i.e., 
$$\int_{T^2} vv_{\bar1\bar 1}d A_x=-\int_{T^2} v_{\bar 1}^2 d A_x, ~~~\int_{T^2} \bv\,\bv_{11}d A_x=-\int_{T^2} \bv_{1}^2d A_x,$$
$$\int_{T^2} \bv v_{1\bar 1}d A_x=\int_{T^2} v \bv_{\bar 1 1}d A_x=-\int_{T^2}|v_{ 1}|^2d A_x.$$ 
Note that both the tangent bundle and normal bundle of $x$ are flat, therefore by  \eqref{eq-vRicci1} $\sim$ \eqref{eq-vRicci3} and \eqref{eq-psiphi-torus}, we have 
$$v_{1\bar 1}=v_{\bar1 1}, ~~~v_{11\bar 1}=v_{1\bar 1 1},~~~v_{\bar 1 \bar 1 1}=v_{\bar 1 1 \bar 1},$$
which further implies that 
$$\int_{T^2} |v_{1 1}|^2d A_x=\int_{T^2} |v_{\bar 1\bar 1}|^2 d A_x=\int_{T^2} |v_{ 1\bar 1}|^2 d A_x.$$
Substituting theses equations into \eqref{eq-2nd-torus}, we can derive that 
\beq\label{eq-2nd-torus1}
\begin{split}
\frac{\p^2}{\p t^2}\Big{|}_{t=0}\mathcal{W}(x)=4\int_{T^2}\Big(4\left | v_{1\bar1}\right | ^2 +6|v|^2-3v_{\bar1}^2-3\bv_{1}^2-4|v_{1}|^2\Big)dA_x.
\end{split}
\eeq
\begin{lemma}\label{lem-inequality}
For any given smooth complex-valued function $v=a+i b$ on $T^2$, where $a$ and  $b$ 
denote the real and imaginary part of $v$ respectively,  
along the normal variation determined by $V=\frac{v\bn+\bv n}{2}$, we have 
\beq\label{eq-2nd-torus2}
\begin{split}
\frac{\p^2}{\p t^2}\Big{|}_{t=0}\mathcal{W}(x)\geq \int_{T^2}\big(\left |  \triangle v \right | ^2+24|v|^2-10 |\nabla v|^2\big)dA_x,
\end{split}
\eeq
where 
$$\triangle v\triangleq\triangle a+i \triangle b,~~~\nabla v \triangleq \nabla a+i \nabla b.$$
Moreover, in \eqref{eq-2nd-torus2}, the equality holds if and only if 
$$i(a_{\bar{1}}-a_1)=b_1+b_{\bar{1}},~~i.e.,~~ -\frac{\partial a}{\partial s}=\frac{\partial b}{\partial t}.$$
\end{lemma}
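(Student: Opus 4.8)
The plan is to convert the complex-covariant-derivative expression in \eqref{eq-2nd-torus1} into ordinary real calculus on the flat torus, using $\theta=\sqrt{2}\,(dt+i\,ds)$ and the fact that all connection forms vanish. First I would record that since $\omega_{12}=\omega_{34}=0$, the covariant derivatives reduce to ordinary directional derivatives along $e=e_1+ie_2=\tfrac{1}{\sqrt2}(\partial_t-i\,\partial_s)$ after the appropriate normalization; concretely $v_1 = \tfrac{1}{\sqrt2}(v_t - i v_s)$, $v_{\bar1}=\tfrac{1}{\sqrt2}(v_t+iv_s)$, up to the constant factor $e^{-\sigma}$ with $e^{2\sigma}=2$, and similarly $v_{1\bar1}=\tfrac14(v_{tt}+v_{ss})=\tfrac14\triangle v$. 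Substituting these into \eqref{eq-2nd-torus1} I would get everything in terms of $v_t,v_s$ and $\triangle v$. The term $4|v_{1\bar1}|^2$ becomes $\tfrac14|\triangle v|^2$; the terms $-3v_{\bar1}^2-3\bar v_1^2 = -6\,\mathfrak{Re}(v_{\bar1}^2)$ and $-4|v_1|^2$ expand into quadratic expressions in $v_t,v_s$ (and their real/imaginary parts). The constant $6|v|^2$ stays. After multiplying through by the overall factor $4$ and by $dA_x=2\,dt\,ds$ bookkeeping, I expect the right-hand side of \eqref{eq-2nd-torus1} to become, pointwise,
$$\tfrac14|\triangle v|^2 + 6|v|^2 - 6\,\mathfrak{Re}(v_{\bar1}^2) - 4|v_1|^2$$
times a constant, which I must then compare to $\tfrac14|\triangle v|^2+6|v|^2-\tfrac{10}{4}|\nabla v|^2$ (after matching normalizations so that the target inequality \eqref{eq-2nd-torus2} comes out with coefficients $1$, $24$, $-10$).

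The key algebraic step is the identity $-6\,\mathfrak{Re}(v_{\bar1}^2)-4|v_1|^2$ versus $-\tfrac{10}{4}|\nabla v|^2 = -\tfrac{5}{4}(|v_t|^2+|v_s|^2)$ — here I should write $v=a+ib$ and expand. Writing $v_1=\tfrac{1}{\sqrt2}((a_t+b_s)+i(b_t-a_s))$ and $v_{\bar1}=\tfrac{1}{\sqrt2}((a_t-b_s)+i(b_t+a_s))$, one computes $|v_1|^2=\tfrac12((a_t+b_s)^2+(b_t-a_s)^2)$ and $\mathfrak{Re}(v_{\bar1}^2)=\tfrac12((a_t-b_s)^2-(b_t+a_s)^2)$. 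Then $-6\,\mathfrak{Re}(v_{\bar1}^2)-4|v_1|^2$ expands to a quadratic form in $a_t,a_s,b_t,b_s$; subtracting the target quadratic form $-\tfrac{5}{4}(a_t^2+a_s^2+b_t^2+b_s^2)$ (with appropriate scaling) should leave a \emph{negative-semidefinite} remainder that is a perfect square (or negative of a perfect square) — the statement about equality, $-\partial_s a = \partial_t b$, i.e.\ $a_s+b_t=0$, tells me the remainder is proportional to $-(a_s+b_t)^2$, i.e.\ $-(b_t+a_s)^2$ up to a positive constant. So the plan is: after the substitution, show the pointwise inequality reduces to $c\,(a_s+b_t)^2\ge 0$ for some $c>0$, integrate over $T^2$, and read off both the inequality and the equality case. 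The $24|v|^2$ and $10|\nabla v|^2$ coefficients in \eqref{eq-2nd-torus2} will emerge from the overall factor of $4$ in \eqref{eq-2nd-torus1} together with the normalization $2\,dt\,ds$.

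The main obstacle I anticipate is purely bookkeeping: tracking the three intertwined normalization constants — the conformal factor $e^{\sigma}=\sqrt2$ relating $\theta$ to $dz$ (which enters every covariant derivative), the area element $dA_x=2\,dt\,ds$, and the overall factor $4$ in \eqref{eq-2nd-torus1} — so that the cross terms cancel \emph{exactly} and the coefficients $1,24,-10$ come out right rather than off by a factor of $2$. Once the normalizations are fixed, verifying that the leftover quadratic form is a nonnegative multiple of $(\partial_t b+\partial_s a)^2$ is a short computation in four variables; this is where the equality characterization is read off. No global analysis (Fourier expansion on $T^2$, eigenvalue estimates) is needed for the lemma itself — it is a pointwise inequality — though such tools will presumably be invoked afterward to deduce strict Willmore-stability from \eqref{eq-2nd-torus2} via the sharp Poincaré-type inequality $\int(|\triangle v|^2 - c|\nabla v|^2 + c'|v|^2)\ge 0$ on the specific flat torus.
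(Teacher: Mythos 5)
Your overall route — rewrite \eqref{eq-2nd-torus1} in terms of $a_t,a_s,b_t,b_s$, identify $4|v_{1\bar1}|^2$ with $\tfrac14|\triangle v|^2$, and complete a square whose vanishing gives $a_s+b_t=0$ — is the same as the paper's, and the normalization bookkeeping you worry about does come out right. But there is one genuine error: the inequality is \emph{not} pointwise, and the claim that ``no global analysis is needed'' fails. The culprit is $|v_1|^2$: writing $v_1=c\big((a_t+b_s)+i(b_t-a_s)\big)$ with $c=\tfrac{1}{2\sqrt2}$ (note your $v_1=\tfrac{1}{\sqrt2}(v_t-iv_s)$ is off by a factor of $2$), one gets $|v_1|^2=c^2\big(|\nabla_{\mathrm{flat}}v|^2+2(a_tb_s-a_sb_t)\big)$, which carries the Jacobian density $a_tb_s-a_sb_t$. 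Carrying out your expansion, the pointwise difference between the integrand of \eqref{eq-2nd-torus1} (times $4$) and the target integrand is $c^2\big(12a_s^2+12b_t^2+20a_sb_t+4a_tb_s\big)$, and this quadratic form is indefinite: take $a_s=b_t=0$ and $a_tb_s<0$. So the remainder is not a nonnegative multiple of $(a_s+b_t)^2$ pointwise.

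The fix is exactly the global input the paper uses in \eqref{eq-a1bb1}: $\int_{T^2}(a_tb_s-a_sb_t)\,dt\,ds=\int_{T^2}da\wedge db=0$ on the closed torus (equivalently, $2i\int(a_{\bar1}b_1-b_{\bar1}a_1)\,dA_x=0$), so that $\int a_tb_s=\int a_sb_t$ and the offending cross terms combine to $24\int a_sb_t$, turning the remainder into $12c^2\int(a_s+b_t)^2\ge 0$ \emph{after integration}. This also rescues the equality characterization, which then reads $a_s+b_t\equiv 0$, i.e.\ $-\partial_s a=\partial_t b$, as in the lemma. You should therefore replace the pointwise claim by this integral identity; once that is inserted your argument coincides with the paper's proof, which performs the same completion of the square on $i(a_{\bar1}-a_1)$ and $b_1+b_{\bar1}$ after discarding $\int da\wedge db$.
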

\begin{proof} 
It is obvious that 
$$a_{\bar1} a_{1}=\frac{1}{4}|\nabla a|^2,~~~b_{\bar1} b_{1}=\frac{1}{4}|\nabla b|^2,~~~ v_{1\bar1} =v_{\bar1 1}=\frac{1}{4}\triangle v.$$

Using 
\beq\label{eq-a1bb1}
2i \int_{T^2} \big(a_{\bar1} b_{1}-b_{\bar1} a_{1}\big)d A_x=\int_{T^2} da\wedge db=0,
\eeq
we can derive that 
\beq \label{eq-ingradv}
\int_{T^2} |v_{1}|^2 d A_x=\int_{T^2} v_{1} \bv_{\bar1}d A_x=\int_{T^2} \big(a_{\bar1} a_{1}+b_{\bar1} b_{1}\big)d A_x=\frac{1}{4}\int_{T^2} |\nabla v |^2.
\eeq

Note that 
\begin{equation*}
\begin{split}
&v_{\bar 1}^2+\bv_{1}^2=a_1^2-b_1^2-2ia_1b_1+a_{\bar 1}^2-b_{\bar 1}^2+2ia_{\bar 1}b_{\bar 1}\\
=&a_1^2+a_{\bar 1}^2-(b_1^2+b_{\bar 1}^2)+2i(a_{\bar 1}-a_1)(b_1+b_{\bar 1})-2i(a_{\bar1} b_{1}-b_{\bar1} a_{1})\\
=&\frac{1}{2}\big((a_1-a_{\bar1})^2+(a_1+a_{\bar1})^2\big)-\frac{1}{2}\big((b_1-b_{\bar1})^2+(b_1+b_{\bar1})^2\big)+2i(a_{\bar 1}-a_1)(b_1+b_{\bar 1})-2i(a_{\bar1} b_{1}-b_{\bar1} a_{1})\\
\leq&\frac{1}{2}\big((a_1+a_{\bar1})^2-(a_1-a_{\bar1})^2\big)+\frac{1}{2}\big((b_1+b_{\bar1})^2-(b_1-b_{\bar1})^2\big)-2i(a_{\bar1} b_{1}-b_{\bar1} a_{1})\\
=&2(a_{\bar1} a_{1}+b_{\bar1} b_{1})-2i(a_{\bar1} b_{1}-b_{\bar1} a_{1}),
\end{split}
\end{equation*}
where we have used Cauchy-Schwartz inequality to $2i(a_{\bar 1}-a_1)(b_1+b_{\bar 1})$. It follows from \eqref{eq-a1bb1} and \eqref{eq-ingradv} that 
\beq \label{eq-ingradv2}
\int_{T^2} (v_{\bar 1}^2+\bv_{1}^2) d A_x\leq \frac{1}{2}\int_{T^2} |\nabla v |^2.
\eeq
Subitituting \eqref{eq-a1bb1} $\sim$ \eqref{eq-ingradv2} into \eqref{eq-2nd-torus2}, we complete the proof. 
\end{proof}
Next, we consider the following quadratic form on $C^{\infty}(T^2)$: 
$$Q(\alpha,\beta)\triangleq \int_{T^2}\big( \triangle \alpha \triangle \beta+24 \alpha \beta-10 \langle  \nabla \alpha,  \nabla \beta \rangle \big)dA_x,$$
and extend it in the natural way to a Hermitian  quadratic form on the space of smooth complex-valued functions on $T^2$. Then we can rewrite \eqref{eq-2nd-torus2} as follows, 
$$\frac{\p^2}{\p t^2}\Big{|}_{t=0}\mathcal{W}(x)\geq Q(v,v).$$

\begin{theorem}
The Clifford torus in $\mathbb{C}P^2$ is strictly Willmore-stable. 
\end{theorem}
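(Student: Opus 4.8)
The plan is to start from the estimate of Lemma~\ref{lem-inequality},
$$\frac{\p^2}{\p t^2}\Big|_{t=0}\mathcal{W}(x)\ \ge\ Q(v,v):=\int_{T^2}\bigl(|\triangle v|^2+24|v|^2-10|\nabla v|^2\bigr)\,dA_x ,$$
cf. \eqref{eq-2nd-torus2}, and to show that $Q(v,v)>0$ for every nonzero normal variation $V=(v\bn+\bv n)/2$, up to one borderline eigenspace that must be handled directly. Since $T^2$ is flat, $Q$ is diagonalised by the eigenfunctions of the Laplacian $\triangle$: if $-\triangle v_\lambda=\lambda v_\lambda$ then $\int_{T^2}|\triangle v_\lambda|^2=\lambda^2\int_{T^2}|v_\lambda|^2$ and $\int_{T^2}|\nabla v_\lambda|^2=\lambda\int_{T^2}|v_\lambda|^2$, so
$$Q(v,v)=\sum_\lambda (\lambda^2-10\lambda+24)\,\|v_\lambda\|^2=\sum_\lambda (\lambda-4)(\lambda-6)\,\|v_\lambda\|^2 .$$
First I would invoke the explicit spectrum of this particular (equilateral) flat torus: the nonzero eigenvalues of $-\triangle$ lie in $\{2,6,8,14,\dots\}$, so $4$ is not an eigenvalue and no eigenvalue lies strictly between $4$ and $6$. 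Hence $(\lambda-4)(\lambda-6)\ge 0$ for every eigenvalue, with equality only at $\lambda=6$, and therefore $Q(v,v)\ge 0$, with $Q(v,v)>0$ — thus $\frac{\p^2}{\p t^2}|_{t=0}\mathcal{W}(x)>0$ — whenever $v$ has a component outside the $6$-eigenspace $E_6$.

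The remaining case $v\in E_6$ is the delicate one: there $Q(v,v)=0$ and the bound from Lemma~\ref{lem-inequality} carries no information. Here I would return to the sharpened formula \eqref{eq-2nd-torus1}. Substituting $-\triangle v=6v$ (equivalently $v_{1\bar1}=-\tfrac32 v$ and $\int_{T^2}|v_1|^2=\tfrac32\int_{T^2}|v|^2$) collapses it to
$$\frac{\p^2}{\p t^2}\Big|_{t=0}\mathcal{W}(x)=36\,\|v\|^2-12\int_{T^2}\bigl(v_{\bar1}^2+\bv_1^2\bigr)\,dA_x .$$
The integral on the right was already estimated inside the proof of Lemma~\ref{lem-inequality} by $\tfrac12\int_{T^2}|\nabla v|^2\,dA_x=3\|v\|^2$, with equality precisely when the pointwise relation $-\partial_s a=\partial_t b$ (with $v=a+ib$) holds. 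Thus on $E_6$ one has $\frac{\p^2}{\p t^2}|_{t=0}\mathcal{W}(x)\ge 0$, and the proof is finished once one shows that no nonzero element of $E_6$ satisfies $-\partial_s a=\partial_t b$. I would do this from the concrete description of $E_6$ as the span of the six exponentials $e^{i\langle w,\,\cdot\,\rangle}$ with $w$ ranging over the second shell of the dual lattice: the equality relation becomes a finite set of linear conditions coupling the Fourier coefficient of $w$ to that of $-w$, and, using that the admissible $w$ are constrained to lie in the dual lattice, this system is shown to admit only the trivial solution on $E_6$.

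I expect this last step — pinning down $E_6$ explicitly and excluding the equality locus of Lemma~\ref{lem-inequality} on it — to be the main obstacle; everything before it is routine spectral bookkeeping. Finally, the parallel statement for $\mathcal{W}^-$ (not needed for the theorem as stated, but closely related) would be obtained by the same argument applied to the $\phi$-part of the second variation, and it in turn re-implies strict Willmore stability, since on a torus $\mathcal{W}=\mathcal{W}^++\mathcal{W}^-$ and the Clifford torus, having $\psi\equiv 0$, minimises $\mathcal{W}^+$, so that $\frac{\p^2}{\p t^2}|_{t=0}\mathcal{W}^+(x)\ge 0$.
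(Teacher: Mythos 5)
Your strategy coincides with the paper's own proof up to the last step: reduce to $Q(v,v)=\sum_\lambda(\lambda-4)(\lambda-6)\|v_\lambda\|^2$ via Lemma~\ref{lem-inequality}, check that no eigenvalue lies strictly between $4$ and $6$, and then analyze the equality locus on the borderline eigenspace. One factual slip first: with the metric $2(dt^2+ds^2)$ the spectrum of this torus is $\{6(m^2+n^2+mn)\}$, so the nonzero eigenvalues are $6,18,24,\dots$ as in \eqref{eq-spec}, not $2,6,8,14,\dots$ (that list belongs to a different normalization). This happens not to affect the sign analysis, since in either case the unique borderline eigenvalue is $\lambda=6$ and all others give $(\lambda-4)(\lambda-6)>0$.

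The genuine gap is in your final step. You propose to show that \emph{no} nonzero pair $(a,b)$ in the $\lambda=6$ eigenspace satisfies the equality condition $-\partial a/\partial s=\partial b/\partial t$, which would make the Hessian strictly positive on all nonzero normal variations. That is impossible: $\mathcal{W}$ is invariant under the isometries of $\mathbb{C}P^2$, and the isometries not preserving the Clifford torus generate a $6$-dimensional space of nontrivial normal variations along which the second variation must vanish. Concretely, $\partial_t$ acts injectively on the $6$-dimensional eigenspace $E_6$, so for each $a\in E_6$ there is a unique $b\in E_6$ with $\partial_t b=-\partial_s a$; the equality locus is therefore a $6$-parameter family, which the paper writes out explicitly. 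The correct conclusion --- and what ``strictly Willmore-stable'' means here --- is that the nullity equals exactly $6$ and is accounted for by the ambient conformal transformations, not that the nullity is zero. As planned, your last step would fail; you would need to replace ``only the trivial solution'' by an exact identification of the $6$-dimensional equality locus with the variations induced by conformal transformations of $\mathbb{C}P^2$.
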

\begin{proof}
This theorem follows if we can prove that the quadratic form $Q$ on $C^{\infty}(T^2)$ is semi-positive definite. 

Given two distinct real-valued eigenspaces, we know they are orthogonal under the $L^2$ inner product. It is straightforward to verify that they are also orthogonal with respect to the quadratic form $Q(\cdot, \cdot)$. Therefore, we only need to verify that the restriction of $Q$ on every real-valued eigenspace is semi-positive definite. 

Suppose $f$ is an eigenfuction corresponding to the eigenvalue $\lambda_k$. Then 
$$Q(f,f)=\int_{T^2}\big(|\triangle f|^2 +24 f^2 -10  | \nabla f|^2\big)d A_x =\int_{T^2}\big(24+\lambda_k(\lambda_k-10)\big)f^2d A_x.$$

As a flat torus, it is straightforward to verify 
that the spectrum of $x$ is given by 
{
$$\{6m^2+6n^2+6mn \, |\, m, n\in \mathbb{Z}\},$$
and the first two nonzero eigenvalues of $x$ are 
\beq\label{eq-spec}
\lambda_1=6, ~~~\lambda_2= 18.
\eeq
Therefore, we have  $Q(f,f)=0$ when $k=1$, and 
$Q(f,f)>0$ when $k\geq2$. 
Therefore $Q$ is semi-positive definite.}  
Hence, we have  
\beq \label{eq-WQ}
\frac{\p^2}{\p t^2}\Big{|}_{t=0}\mathcal{W}(x)\geq Q(v,v)=Q(a,a)+Q(b,b)\geq 0,
\eeq
where $a$ and $b$ are two real functions on $T^2$ and $v=a+ib$. 

To establish the strict Willmore stability of \( x \), we rely on Lemma~\ref{lem-inequality}. According to this lemma, the equality in \eqref{eq-WQ} holds if and only if the condition
\[
-\frac{\partial a}{\partial s} = \frac{\partial b}{\partial t}
\]
is satisfied. Here, both functions \( a \) and \( b \) must reside within the second eigenspace associated with \( x \). This eigenspace is spanned by the set of functions:
\[
\left\{ \cos(-\sqrt{3}t + 3s), ~ \cos(-\sqrt{3}t - 3s), ~ \cos(2\sqrt{3}t), ~ \sin(-\sqrt{3}t + 3s), ~ \sin(-\sqrt{3}t - 3s), ~ \sin(2\sqrt{3}t) \right\}.
\] 
It is straightforward to see that such kind of $\{a, b\}$ must take the form 
\begin{equation*}
 \begin{split}
&a=a_1 \cos(-\sqrt{3}t+3s)+a_2 \cos(-\sqrt{3}t-3s)+a_3 \sin(-\sqrt{3}t+3s)+a_4 \sin(-\sqrt{3}t-3s)\\
&~~~~~+a_5 \cos(2\sqrt{3}t)+a_6\sin(2\sqrt{3}t), \\
&b=\sqrt{3}a_1 \cos(-\sqrt{3}t+3s)-\sqrt{3}a_2 \cos(-\sqrt{3}t-3s)+\sqrt{3}a_3 \sin(-\sqrt{3}t+3s)-\sqrt{3}a_4 \sin(-\sqrt{3}t-3s),
\end{split}
\end{equation*}
where $a_1, \cdots, a_6$ are real constants. 
It follows that nullity of the Willmore functional $\mathcal{W}$ at $x$  is $6$, which is exactly the dimension of isometric (and consequently, conformal) transformations of $\mathbb{C}P^2$ that do not preserve the Clifford torus. Thus, $x$ is strictly Willmore-stable.  
\end{proof}
{
\begin{remark}
We can also prove that the Clifford torus is strictly stable for the functional $\mathcal{W}^-$. In fact, similar calculation gives us that 
\beq
\begin{split}
\frac{\p^2}{\p t^2}\Big{|}_{t=0}\mathcal{W}^-(x)=\int_{T^2}\Big(16\left | v_{1\bar1}\right | ^2 +12|v|^2-32|v_{1}|^2\Big)dA_x=\int_{T^2}\big(|\triangle v|^2 +12 |v|^2 -8  | \nabla v|^2\big)d A_x, 
\end{split}
\eeq
and the conclusion follows from the spectrum property of Clifford torus given in \eqref{eq-spec}. 
\end{remark}
\begin{remark} \label{rk-complex}
As shown in the introduction, complex curves in $\mathbb{C}P^2$ minimizes the Willmore functional $\mathcal{W}$. An application of our second variational formula to complex curves is that we can provide  a lower bound to the first nonzero eigenvalue of Jacobi operator.  Given a complex curve $x: M^2\rightarrow \mathbb{C}P^2$, by a straightforward calculation, we obtain that 
\beq
\begin{split}
\frac{\p^2}{\p t^2}\Big{|}_{t=0}\mathcal{W^+}(x)&=16\int_{M^2}|v_{\bar 1\bar1}|^2 dA_x=\int_{M^2}\big( 16|v_{\bar 11}|^2 -48|v_{\bar1}|^2\big)dA_x,\\
&=\int_{M^2}\big( |\mathcal{L}(V)|^2+12\langle \mathcal{L}(V),V\rangle \big)dA_x,
\end{split}
\eeq
where $\mathcal{L}\triangleq \triangle^\perp +2(1+|\phi|^2)$ is the Jacobi operator of $x$, and can be expressed as follows 
$$\mathcal{L}(V)=2(v_{1\bar1}\bn+\bv_{\bar1 1}n).$$  
Assume that $\Lambda_1$ is the first nonzero eigenvalue of the Jacobi operator,  then $\Lambda_1>0$. 
It  follows from 
$$\int_{M^2}\Lambda_1(\Lambda_1-12)dA_x=\frac{\p^2}{\p t^2}\Big{|}_{t=0}\mathcal{W^+}(x)\geq 0$$ 
that $\Lambda_1\geq 12$.  We point out that the equality can be attained by the Veronese curve $[1,\sqrt{2}z, z^2]$. In fact, for this curve, all spectrum of the Jacobi operator can be calculated by using the method of Montiel and Urbano developed in \cite{Montiel-Urbano2}, as well as the fact that the Veronese curve is of constant curvature and has parallel second fundamental form. 
\end{remark}
}

{\bf Acknowledgement:} We are supported by NSFC No. 12171473 and No. 11831005. The second author is grateful to Prof. P. Wang and Y. Lv for valuable discussions. 
\par\medskip

~~~\\
\noindent{Changping Wang}\\
\noindent{\small \em School of Mathematics and Computer Science, FJKLMAA, Fujian Normal University, Fuzhou 350117, P. R. China.}\\
\noindent{\em Email: cpwang@fjnu.edu.cn}\\
~~~\\

\noindent{Zhenxiao Xie}\\
\noindent{\small \em School of Mathematical Sciences, Beihang University, Beijing 100191, P. R. China.}\\
\noindent{\em Email: 
xiezhenxiao@buaa.edu.cn}

\end{document}